\newtheorem{thm}{Theorem}[section]
\newtheorem{lem}[thm]{Lemma}
\newtheorem{pro}[thm]{Proposition}
\theoremstyle{definition}
\newtheorem*{rem}{Remark}
\numberwithin{equation}{section}
\newcommand{\X}{\mathbb{X}}
\newcommand{\ex}{\mathbb{E}}
\newcommand{\E}{\mathcal{E}}
\newcommand{\re}{\textup{Re}}
\newcommand{\im}{\textup{Im}}
\newcommand{\Psr}{\Psi_{\textup{rand}}}
\newcommand{\pr}{\mathbb{P}}
\newcommand{\B}{\mathcal B}
\newcommand{\M}{\mathcal{M}}
\newcommand{\Lv}{\mathbf{L}}
\newcommand{\ub}{\mathbf{u}}
\newcommand{\vb}{\mathbf{v}}
\newcommand{\xb}{\mathbf{x}}
\newcommand{\yb}{\mathbf{y}}
\newcommand{\zb}{\mathbf{z}}
\newcommand{\kb}{\mathbf{k}}
\newcommand{\lb}{\mathbf{l}}
\newcommand{\me}{\textup{meas}}
\newcommand\be{\begin{equation}}
\newcommand\ee{\end{equation}}
\newcommand{\sumstar}{\sideset{}{^*}\sum}
\def\u{\mathbf u}
\def\v{\mathbf v}
 \def\D{\mathbf D}
\def\E{\mathbb E}
\def\L{\mathcal L}
\def\P{\mathbb P}
 \def\Phrand{ \Phi_T^{\mathrm{rand}}}
 \def\Phrq{ \Phi_{q,  T}^{\mathrm{rand}}}
 \def\Phrqtail{ \Phi_{>q,T}^{\mathrm{rand}}}
 \def\Phrhat{\widehat{\Phi}_T^{\mathrm{rand}}}
 \def\Phrqhat{\widehat{\Phi}_{q, T}^{\mathrm{rand}}}
 \def\Phrqtailhat{\widehat{\Phi}_{>q,  T}^{\mathrm{rand}}}
 \def\Phhat{\widehat{\Phi}_T}
\newcommand{\newabstract}[1]{%
  \par\bigskip
  \csname otherlanguage*\endcsname{#1}%
  \csname captions#1\endcsname
  \item[\hskip\labelsep\scshape\abstractname.]
}
\begin{document}

\baselineskip=17pt

\title[Zeros of linear combinations of $L$-functions near the critical line]{The number of zeros of linear combinations of $L$-functions near the critical line}

\author{Youness Lamzouri}
\address{Institut \'Elie Cartan de Lorraine, Universit\'e de Lorraine, BP 70239, 54506 Vandoeuvre-l\`es-Nancy Cedex, France}

\email{youness.lamzouri@univ-lorraine.fr}

\author[Yoonbok Lee]{Yoonbok Lee$^{\dagger}$}
\thanks{$^{\dagger}$corresponding author.}
\address{Department of Mathematics \\ Research Institute of Basic Sciences \\ Incheon National University \\ 119 Academy-ro, Yeonsu-gu, Incheon, 22012 \\ Korea}
\email{leeyb@inu.ac.kr, leeyb131@gmail.com}

\date{\today}

\begin{abstract} 
In this paper, we investigate the zeros near the critical line of linear combinations of $L$-functions belonging to a large class, which conjecturally contains all $L$-functions arising from automorphic representations on $\textup{GL}(n)$. More precisely, if $L_1, \dots, L_J$ are distinct primitive $L$-functions with $J\ge 2$, and $b_j$ are any nonzero real numbers, we prove that the number of zeros of $F(s)=\sum_{j=1}^J b_j L_j(s)$ in the region $\re(s)\geq 1/2+1/G(T)$ and $\im(s)\in [T, 2T]$ is asymptotic to $K_0 T G(T)/\sqrt{\log G(T)}$ uniformly in the range $ \log \log T \leq  G(T)\leq (\log T)^{\nu}$, where $K_0$ is a certain positive constant that depends on $J$ and the $L_j$. This establishes a generalization of a conjecture of Hejhal in this range. Moreover, the exponent $\nu$ verifies $\nu\asymp 1/J$ as $J$ grows.

\end{abstract}

\keywords{Zero density estimates,  zeta functions, linear combination of $L$-functions, zeros near the critical line.}

\subjclass[2010]{Primary 11E45, 11M41.}

\thanks{This work was supported by Incheon National University (International Cooperative) Research Grant in 2019. }

\maketitle

\section{Introduction}

The theory of $L$-functions has become a central part of modern number theory, due to its connection to various arithmetic, geometric and algebraic objects. $L$-functions are  represented by Dirichlet series which are absolutely convergent in half-planes. They satisfy certain conditions, including having a meromorphic continuation, an Euler product over primes and a functional equation. The prototypical example of an $L$-function is the Riemann zeta function. Other important examples include Dirichlet $L$-functions attached to primitive Dirichlet characters, and the Hasse-Weil $L$-functions attached to elliptic curves. The Langlands program predicts that all $L$-functions arise from automorphic representations over $\textup{GL}(n)$.

 $L$-functions are predicted to verify several hypotheses, the most important of which is the generalized Riemann hypothesis (GRH), which asserts that all non-trivial zeros of $L$-functions lie on the critical line $\re(s)=1/2$. On the other hand, there exist various Dirichlet series which have arithmetical significance and satisfy a functional equation, but are not $L$-functions, since they do not possess an Euler product. Most of these functions can be expressed as linear combinations of $L$-functions.  Important examples include Epstein zeta functions associated to quadratic forms, and the zeta function attached to ideal classes in number fields.  Unlike $L$-functions, these zeta functions are not expected to satisfy the   GRH, and some of them might even possess zeros inside the region of absolute convergence. The first to have investigated such a phenomenon are Davenport and Heillbronn \cite{DH}, who proved in 1936 that the Epstein zeta function of a positive definite quadratic form of class number $\ge 2$ has infinitely many zeros in the half-plane of absolute convergence $\re(s)>1$.

 For a complex valued function $f(z)$, we shall denote by $N_f(\sigma_1, \sigma_2, T)$ the number of zeros of $f$ in the rectangle $\sigma_1\leq \re(s) \leq \sigma_2$ and $T\leq \im(s)\leq 2T$. We also let $N_f(\sigma, T)$ be the number of zeros of $f$ in the region  $\re(s) \geq \sigma$ and $T\leq \im(s)\leq 2T$. Voronin \cite{V} proved that $N_E(\sigma_1, \sigma_2, T)\gg T$ for any $1/2<\sigma_1<\sigma_2<1$ fixed, where $E(s, Q)$ is the Epstein zeta function attached to a binary quadratic form $Q$ with integral coefficients and with class number at least $2$. Lee \cite{Le2} improved this result to an asymptotic formula $N_E(\sigma_1, \sigma_2, T)\sim c(\sigma_1, \sigma_2) T$ for some positive constant $c(\sigma_1, \sigma_2)$. Gonek and Lee \cite{GL} obtained a quantitative bound for the error term in this asymptotic formula, and this was subsequently improved by Lamzouri \cite{La} who showed that one can obtain a saving of a power of $\log T$ in the error term. 
 
 Throughout this paper we let $J\geq2$ be an integer, $b_1, \dots, b_J$ be non-zero real numbers such that $\sum_{j=1}^J b_j^2 = 1$, and we define
\begin{equation}\label{DefLinearComb}
 F(s):=F_{L_1, \dots, L_J}(s)= \sum_{j=1}^J b_j L_j(s),
 \end{equation}
for $L$-functions $L_1, \dots, L_J$. 
Lee, Nakamura and Pa\'nkowski \cite{LNP} generalized Voronin's result to zeros of linear combinations of $L$-functions in the strip $1/2<\sigma_1<\sigma_2<1$, where $\sigma_1, \sigma_2$ are fixed. More precisely, they established that
$$N_F(\sigma_1, \sigma_2, T)\gg T,$$ 
if the $L_j$ belong to the Selberg class of $L$-functions, and verify a stronger version of the Selberg orthogonality conjecture (see \eqref{SOC} below). In the special case where  the $L_j$ are Dirichlet $L$-functions or Hecke $L$-functions attached to the ideal class characters of a quadratic imaginary field, one obtains an asymptotic formula $N_F(\sigma_1, \sigma_2, T) \sim c_F(\sigma_1, \sigma_2)T$ as $T\to \infty$, for a certain positive constant  $c_F(\sigma_1,\sigma_2)$ (See \cite{BoJe}, \cite{KL} and \cite{Le2}).
Furthermore, by using the methods of the proof of Theorem \ref{Main} below, one can generalize this result by showing that 
$$N_F(\sigma_1, \sigma_2, T) = c_F(\sigma_1, \sigma_2) T + O\left(\frac{T}{(\log T)^{\delta}}\right), $$
if the $L$-functions $L_1, \dots, L_J$ satisfy the assumptions A1--A5 below and $\delta= \delta ( J, F, \sigma_1, \sigma_2 )$ is a positive constant.

Although linear combinations of $L$-functions have many zeros off the critical line, it was conjectured by Montgomery that $100\%$ of the zeros of $F(s)$ lie on the critical line, if the $L_j$ are primitive\footnote{An $L$-function is primitive if it cannot be written as a product of non-trivial $L$-functions, where the trivial $L$-function is the constant $1$.} $L$-functions satisfying assumptions A1 and A2 below. Bombieri and Hejhal \cite{BH} established this conjecture if  the $L_j$ satisfy the assumptions A1, A2, A3 and A5 below,  
conditionally on the GRH and a zero-spacing hypothesis for each of the $L_j $. Unconditionally, Selberg \cite{Se} established that a positive proportion of the zeros of $F(s)$ lie on the critical line, in the special case where all of the $L_j$ are Dirichlet $L$-functions having the same parity and conductor.

In this paper we study the zeros of the linear combination $F(s)$ where the $L$-functions $L_1 , \ldots, L_J $ satisfy the following assumptions:

\begin{enumerate}
\item[A1:] (Euler product)  For $ j = 1, \ldots , J $ and $\re(s)>1$ we have 
$$ L_j  ( s) = \prod_p \prod_{i=1}^d \bigg(   1 -   \frac{ \alpha_{j,i}(p)}{p^s} \bigg)^{-1} $$
where $ | \alpha_{j,i} (p)  | \leq p^{\theta}$ for some fixed $ 0 \leq \theta < 1/2$ and for every $ i = 1, \ldots , d.$

\item[A2:] (Functional equation) The functions $L_1, L_2, \dots, L_J$ satisfy the same functional equation
 $$ \Lambda_j(s) = \omega \overline{ \Lambda_j( 1- \bar{s})} ,$$ 
where
$$ \Lambda_j(s) := L_j(s) Q^s \prod_{\ell=1}^k \Gamma ( \lambda_{\ell} s+\mu_{\ell} ) , $$  
$ | \omega| =1 $, $Q>0$, $ \lambda_{\ell}>0 $ and $\mu_{\ell} \in \mathbb{C} $ with $ \re ( \mu_{\ell} ) \geq 0 $.
\item[A3:] (Ramanujan hypothesis on average)
$$ \sum_{ p \leq x } \sum_{i=1}^d | \alpha_{j,i }(p) |^2 = O( x^{1+\epsilon})$$
holds  for every $ \epsilon>0$ and for every $ j = 1, \ldots , J $ as $ x \to \infty $.

\item[A4:] (Zero density hypothesis) There exist positive constants $c_1, c_2$ such that for all $1\le j\le J$ and all $\sigma\geq 1/2$ we have 
\be\label{assumption zero}
 N_{L_j } ( \sigma, T ) \ll T^{1 - c_1(\sigma - 1/2) } (\log T)^{c_2}.
 \ee

\item[A5:] (Selberg orthogonality conjecture) By assumption A1 we can write 
 $$ \log L_j(s) = \sum_p \sum_{k=1}^\infty   \frac{ \beta_{L_j} (p^k)}{ p^{ks}}  , $$
where $\beta_{L_j} (p^k )$ are complex numbers.
Then, for all $1\leq j, k \leq J$ there exist constants $ \xi_j >0$ and $ c_{j,k}$ such that
 \be\label{SOC}
 \sum_{p \leq x} \frac{ \beta_{L_j}(p)  \overline{\beta_{L_k}(p) } }{p} = \delta_{j,k} \xi_j \log \log x + c_{j,k} + O \bigg( \frac{1}{ \log x} \bigg),
 \ee
  where $ \delta_{j,k} = 0 $ if $ j \neq k $ and $ \delta_{j,k} = 1 $ if $ j = k$. 
 
\end{enumerate}

\begin{rem}
The assumptions A1-A5 are standard, and are expected to hold for all $L$-functions arising from automorphic representations on GL$(n)$. In particular, they are verified by GL$(1)$ and GL$(2)$ $L$-functions, which are the Riemann zeta function and Dirichlet $L$-functions, and $L$-functions attached to Hecke holomorphic or Maass cusps forms. For GL$(1)$ $L$-functions, the Selberg orthogonality conjecture boils down to the fact that $L(s, \chi)$ is regular and non-zero at $s=1$, if $\chi$ is a non-principal Dirichlet character. For GL(2) $L$-functions, assumptions A3 and A5 are handled using the Ranking-Selberg convolution, while assumption A4 is proved in \cite{Lu} for $L$-functions attached to holomorphic cusp forms, and in \cite{SaSe} for $L$-functions attached to Maass forms. Assumptions A4 and A5 are used to investigate the joint distribution of $\log L_1, \log L_2, \dots, \log L_J$ near the critical line, which is a key component of the proof of Theorem \ref{Main} below. To this end, the zero density hypothesis A4 is used to approximate $\log L_j(\sigma+it)$ by short Dirichlet polynomials for ``almost all'' points $t$, while the Selberg orthogonality conjecture A5 insures the ``statistical independence'' of the functions $\log L_j(\sigma+it)$, when $\sigma$ is very close to $1/2$.
\end{rem}

For each $1\leq j\leq J$, write $L_j$ as a Dirichlet series $L_j (s) = \sum_{n=1}^\infty  \frac{ \alpha_{L_j }(n)}{n^s }$, which is absolutely convergent for $ \re (s) > 1$ by assumption A3. Then $F(s)$ has a Dirichlet series representation 
$$ \sum_{n=1}^\infty \frac{\alpha_F (n)}{n^s} = \sum_{n=1}^\infty   \frac{ \sum_{j=1}^J b_j \alpha_{L_j }(n)}{n^s }.$$
Since the first nonzero term dominates the others, $ F(s) $ has no zeros in $\re(s) > A$ for some constant $A>0$. Hence, we may define 
$$\sigma_F := \sup \{ \re(\rho)  : F( \rho ) = 0 \} \leq A .$$
Moreover, it follows from assumption A2 that $F(s)$ satisfy the functional equation 
\be\label{FE3}
 F(s) Q^s \prod_{\ell=1}^k \Gamma ( \lambda_\ell s + \mu_\ell )  = \omega \overline{F(1-\bar{s})}  Q^{1-s} \prod_{\ell=1}^k \Gamma (\lambda_\ell (1-s) + \bar{\mu}_\ell ) .
 \ee
Furthermore, since $F(s)$ has no zeros on $\re(s) > \sigma_F $, by \eqref{FE3} we see that 
$$ \left\{  - \frac{\mu_\ell +m}{\lambda_\ell}  :   - \frac{\re(\mu_\ell )+m}{\lambda_\ell} < 1- \sigma_F ,  \ell = 1, \ldots , k \mathrm{~and~} m=0, 1, 2, \ldots \right\} $$
is the set of (trivial) zeros of $F(s) $ on $\re(s) < 1- \sigma_F $. All the other zeros are in the strip $ 1- \sigma_F \leq \re(s) \leq \sigma_F $ and we may call them the nontrivial zeros. The number of nontrivial zeros $ \beta+i\gamma $ of $F(s)$ with $ 0 < \gamma < T$ is denoted by $N_F (T)$ and  it is well-known that  
$$ N_F(T) \sim \frac{d_F}{ 2 \pi} T \log T $$
as $ T \to \infty$, where  $d_F=2 \sum_{\ell=1}^k\lambda_{\ell}$.

 Bombieri and Hejhal \cite{BH} conjectured that the order of magnitude of the number of zeros of $F(s)$ off the critical line and up to height $T$ should be 
 $$ \frac{T\log T}{\sqrt{\log\log T}}.$$
 Motivated by this conjecture, Hejhal \cite{He, He2} studied the zeros of linear combinations of $L$-functions near the critical line. Suppose that $L_1$ and $L_2$ satisfy assumptions A1, A2, A4, and A5, as well as the Ramanujan-Petersson conjecture (which asserts that $|\alpha_{j, i}(p)|\leq 1$ for all $i, j$ and $p$) instead of the weaker assumption A3. Let  $F(s)= \cos(\alpha)  L_1(s)+ \sin(\alpha)  L_2(s) $, where  $\alpha$ is a real number. Then Hejhal \cite{He} proved that for ``almost all'' $\alpha$ with respect to a certain measure, we have 
\begin{equation}\label{BoundsHejhal}
\frac{T G(T)}{ \sqrt{ \log \log T}}    \ll    N_{F} \bigg( \frac12 + \frac{1}{G(T)} , T \bigg)  \ll    \frac{T G(T)}{ \sqrt{ \log \log T}} 
\end{equation}
in the range $(\log T)^\delta  \leq  G(T)  \leq \frac{ \log T}{ ( \log \log T)^\kappa} $ where $\delta > 0 $ and $ 1 < \kappa < 3 $ are fixed. He also conjectured (see Section 6 of \cite{He}) that the following asymptotic formula should hold for all  $\alpha \notin \frac{\pi}{2}\mathbb{Z}$ in the same range of $G(T)$
\begin{equation}\label{HejhalConjecture}
N_{F} \bigg( \frac12 + \frac{1}{G(T)} , T \bigg) \sim \frac{\sqrt{\xi_1+\xi_2}}{8\pi^{3/2}} \frac{T G(T)}{ \sqrt{ \log G(T)}}.
\end{equation}
In the short note \cite{He2}, Hejhal discussed a generalization of the bounds \eqref{BoundsHejhal} to linear combinations with three or more $L$-functions, but did not provide a complete proof of these bounds. 

In this paper, we prove a quantitative generalization of the conjectured asymptotic formula \eqref{HejhalConjecture}  for any linear combination of $L$-functions $F(s)= \sum_{j=1}^J b_j L_j (s)$ as in \eqref{DefLinearComb}, though in a smaller range of the parameter $G(T)$. More precisely, our main result is the following theorem.

\begin{thm}\label{Main}  Let $F(s)$ be defined by \eqref{DefLinearComb}, where the $L$-functions $L_1 , \ldots, L_J $ satisfy assumptions A1--A5. Let
 $T$ be large,  $\xi=\max_{j\le J} \xi_j$, and 
$$
0<\nu <1/(12 J+ 7 + 16 J \sqrt{3\xi})
$$ 
be a fixed real number. Then for $\sigma= 1/2+1/G(T)$ with $  \log\log T  \leq  G(T)\leq (\log T)^{\nu}$ we have 
$$
N_F ( \sigma, T) =  K_0\frac{ T G(T) }{  \sqrt{ \log G(T)}   } + O \bigg(  \frac{  T G(T)  }{ ( \log G(T))^{5/4} } \bigg),
$$
where 
$$K_0= K_0(J; \xi_1, \xi_2, \dots, \xi_J):= \frac{1  }{ 4 \pi^{J/2+1} \prod_{j=1}^J \sqrt{\xi_j}    } \sum_{n=1}^J  \int_{\mathcal{R}_n }     e^{ - \sum_{j=1}^J u_j^2 / \xi_j }   u_n   d\ub$$
and 
$$\mathcal{R}_n := \{ \ub \in \mathbb{R}^J : u_n = \max \{ u_1 , \ldots, u_J \} \}.$$
\end{thm}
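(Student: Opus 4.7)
The plan is to reduce the zero-count $N_F(\sigma,T)$ to a mean value of $\log|F|$ on a vertical line via Littlewood's lemma, compute that mean value using a quantitative joint central limit theorem for $\re\log L_1,\dots,\re\log L_J$ near the critical line, and then recover $N_F(\sigma,T)$ itself by a short-interval differencing trick. Concretely, I apply Littlewood's lemma on the rectangle $[\sigma,\sigma+h]\times[T,2T]$ (with $T$ and $2T$ slightly perturbed to avoid zeros of $F$) to obtain
\[
2\pi\int_\sigma^{\sigma+h}N_F(u,T)\,du=\int_T^{2T}\bigl(\log|F(\sigma+it)|-\log|F(\sigma+h+it)|\bigr)dt+O(\log T).
\]
Since $u\mapsto N_F(u,T)$ is decreasing, the average $\frac{1}{h}\int_\sigma^{\sigma+h}N_F(u,T)\,du$ lies between $N_F(\sigma+h,T)$ and $N_F(\sigma,T)$; combining with the analogous identity on $[\sigma-h,\sigma]$ traps $N_F(\sigma,T)$ from both sides and reduces the task to an accurate asymptotic for the mean value of $\log|F|$ together with a good choice of $h$.

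The analytic heart of the proof is the asymptotic
\[
\int_T^{2T}\log|F(u+it)|\,dt=4\pi K_0\,T\sqrt{\log G_u}+O\!\left(\frac{T}{(\log G_u)^{2}}\right),\qquad G_u:=\frac{1}{u-1/2},
\]
uniformly for $u$ in the relevant range. The guiding heuristic is that near the critical line one of the terms $b_jL_j(u+it)$ typically dominates, so $\log|F(u+it)|\approx\max_{j\leq J}\re\log(b_jL_j(u+it))$; moreover, for random $t\in[T,2T]$ the normalised vector $\bigl(\re\log L_j(u+it)/\sqrt{\log G_u}\bigr)_{j\leq J}$ behaves like $J$ independent centred Gaussians of variances $\xi_j/2$, whose expected maximum equals $4\pi K_0$ by the very definition of $K_0$. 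To make this rigorous I use assumption A4 to approximate $\log L_j(u+it)$, outside a sparse exceptional set, by the short Dirichlet polynomial $P_j(t)=\sum_{p\leq x}\beta_{L_j}(p)p^{-u-it}$ with $x=\exp((\log T)^\eta)$ for a suitable $\eta=\eta(\nu,J)>0$, and I use assumption A5 to compute the joint moments of $(\re P_j(t))_j$ up to a large order, proving a quantitative joint CLT with independent Gaussian limits. Plugging this joint density into the Lipschitz functional $\ub\mapsto\max_j u_j$ and integrating over $t$ yields the claimed mean-value formula.

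Substituting the mean-value asymptotic into the Littlewood identity and Taylor expanding $\sqrt{\log G_{\sigma+h}}$ around $\sqrt{\log G_\sigma}$, the main term of $\int_\sigma^{\sigma+h}N_F(u,T)\,du$ becomes $K_0T\log(1+hG)/\sqrt{\log G}$, which for $hG$ small is $\approx K_0TGh/\sqrt{\log G}$. After dividing by $h$, the Taylor error is $O(TG(hG)/\sqrt{\log G})$ and the propagated CLT error is $O(T/(h(\log G)^2))$; balancing them by choosing $hG\asymp(\log G)^{-3/4}$ yields the claimed bound $O(TG/(\log G)^{5/4})$. The range restriction $G(T)\leq(\log T)^\nu$ with $\nu\asymp1/J$ arises from the interplay between the Dirichlet polynomial length allowed by A4 and the $J$-fold moment computations required in the joint CLT, both of which restrict how accurately the variance $(\xi_j/2)\log G_u$ can be resolved.

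The main obstacle is the probabilistic step, in particular the control of $\log|F|$ on the exceptional set of $t$ where two or more terms $b_jL_j(\sigma+it)$ nearly cancel: there $|F|$ may be exponentially small, so $\log|F|$ can blow up to $-\infty$ and its contribution risks dominating the $T/(\log G)^2$ error term. Handling this forces a negative-moment bound $\int_T^{2T}|F(\sigma+it)|^{-\kappa}dt\ll T$ for a small $\kappa>0$, itself proved via a joint density estimate that controls not just when a single $L_j$ is extreme but also when a specific linear combination of the $L_j$'s is small. Engineering such a joint CLT with the required quantitative error, while keeping the dependence on $J$ and the Dirichlet polynomial length under control, constitutes the bulk of the technical work and ultimately dictates the numerical value of $\nu$ in the theorem.
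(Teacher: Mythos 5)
Your skeleton (Littlewood's lemma on $[\sigma-h,\sigma]$ and $[\sigma,\sigma+h]$, a mean-value asymptotic for $\log|F|$ computed via a joint CLT for the $\log L_j$, and a differencing/Taylor step to recover $N_F$) matches the paper's high-level strategy, and your identification of $4\pi K_0$ as the expected maximum of $J$ independent Gaussians of variances $\xi_j/2$ is correct. But there is a genuine gap at the pivotal step, namely the claimed asymptotic
\[
\int_T^{2T}\log|F(u+it)|\,dt=4\pi K_0\,T\sqrt{\log G_u}+O\!\left(\frac{T}{(\log G_u)^{2}}\right).
\]
This statement is too strong and is not what your heuristic ``$\log|F|\approx\max_j\re\log(b_jL_j)$'' actually delivers. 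Writing the mean value (via the random model and the resulting density $H_T$) in the form $4\pi K_0\sqrt{\log G}+D_2+\sum_n\mathcal{E}_n(\sigma)+\cdots$, the correction terms $\mathcal{E}_n(\sigma)$ coming from the ``near-cancellation'' region where two of the $u_j$'s are within $O(1/\sqrt{\log G})$ of each other are, individually, only $O\bigl((\log G)^{-1/4}\bigr)$; the measure of that region is $\asymp(\log G)^{-1/2}$ and the integrand is not uniformly bounded there, so no power-saving close to $(\log G)^{-2}$ is available for the single mean value. You also omit the $O(T)$ constant term $D_2$, which dominates your claimed error. If one merely subtracts two such expansions, the constants $D_2$ cancel but the $\mathcal{E}_n$'s contribute an error $O\bigl((\log G)^{-1/4}\bigr)$, which after dividing by $h$ (with $hG\asymp(\log G)^{-3/4}$ as you propose) gives $O\bigl(TG(\log G)^{1/2}\bigr)$, far larger than the target $O\bigl(TG(\log G)^{-5/4}\bigr)$.

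The missing idea is that one must estimate the \emph{difference} $\mathcal{E}_n(\sigma)-\mathcal{E}_n(\sigma')$ directly, rather than each $\mathcal{E}_n$ separately. The paper writes both $\mathcal{E}_n(\sigma)$ and $\mathcal{E}_n(\sigma')$ as the same integral against slightly rescaled Gaussian weights, subtracts, and exploits that $\log G(T)/\log G_i(T)=1+O\bigl((\log G)^{-2}\bigr)$; the resulting cancellation yields $\mathcal{E}_n(\sigma)-\mathcal{E}_n(\sigma_i)\ll(\log G)^{-9/4}$ (see the proof of Theorem \ref{MainRandom}, around the estimate for $\mathcal{E}_1(\kb,\lb,\sigma)-\mathcal{E}_1(\kb,\lb,\sigma_i)$). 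This is the precise gain over the ``each term is $O((\log G)^{-1/4})$'' bound, and it is what makes the final error $O\bigl(TG/(\log G)^{5/4}\bigr)$ attainable. Without this differencing at the level of the internal error integrals, your argument does not close. Separately, your proposed negative-moment bound $\int_T^{2T}|F(\sigma+it)|^{-\kappa}\,dt\ll T$ is a plausible but harder route to controlling the logarithmic singularity than what the paper actually proves; the paper instead bounds high moments of $\bigl|\log|F|\bigr|$ (Proposition \ref{MomentsLogF}) and proves a concentration/density estimate for the random model near $|F(\sigma,\X)|=R$ (Lemma \ref{ConcentrationRandom}), which is weaker than a negative-moment bound but suffices and is easier to establish in this generality.
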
 

\begin{rem}
In the case $J=2$, an easy calculation shows that $K_0$ matches the constant predicted by Conjecture \eqref{HejhalConjecture} of Hejhal. Indeed, we have in this case 
$$K_0= \frac{1  }{ 4 \pi^{2}  \sqrt{\xi_1 \xi_2} } \left(\int_{u_1\geq u_2}     e^{ - u_1^2 / \xi_1-  u_2^2 / \xi_2}   u_1   d u_1 d u_2+\int_{u_2\geq u_1}     e^{ - u_1^2 / \xi_1-  u_2^2 / \xi_2}   u_2   d u_1 d u_2 \right).$$
 We can compute the first integral as
$$
 \int_{u_1\geq u_2}     e^{ - u_1^2 / \xi_1-  u_2^2 / \xi_2}   u_1   d u_1 d u_2= \frac{\xi_1}{2}\int_{-\infty}^{\infty} e^{-u_2^2(1/\xi_1+1/\xi_2)}du_2= \frac{\sqrt{\pi}\xi_1}{2} \sqrt{\frac{\xi_1\xi_2}{\xi_1+\xi_2}}.
 $$
Evaluating the second integral similarly, we thus deduce that 
$$ 
K_0=\frac{1  }{ 4 \pi^{2}  \sqrt{\xi_1 \xi_2} }\left( \frac{\sqrt{\pi}\xi_1}{2} \sqrt{\frac{\xi_1\xi_2}{\xi_1+\xi_2}}+  \frac{\sqrt{\pi}\xi_2}{2} \sqrt{\frac{\xi_1\xi_2}{\xi_1+\xi_2}}\right)= \frac{\sqrt{\xi_1+\xi_2}}{8\pi^{3/2}},
$$
as desired.
\end{rem} 
\begin{rem} In \cite{Le4}, Lee obtained an analogue of Theorem \ref{Main} (in a larger range of $G(T)$) in the case where $F(s)=E(s, Q)$ is the Epstein zeta function attached to a binary quadratic form $Q$ with integral coefficients and  class number  $2$ or $3$. In this case $E(s, Q)$ can be expressed as the linear combination of two Hecke $L$-functions. However, the method of \cite{Le4} does not seem to generalize to the case of linear combinations of three or more $L$-functions. 

\end{rem}


\section{Strategy of the proof of Theorem \ref{Main}, key ingredients and detailed results} 
Let $F(s)$ be defined by \eqref{DefLinearComb} where the $L_j $ satisfy assumptions A1--A5. In order to count the number of zeros of $F(s)$ in the region $\re(s)>\sigma$, $T\leq \im(s)\leq 2T$ we shall use Littlewood's lemma in a standard way. Let $\sigma_0>\sigma_F$. Then, $F(s)$ has no zeros in $\re(s)\geq \sigma_0$ and hence by Littlewood's lemma
(see (9.9.1) in \cite{Ti}), we have 
\begin{equation}\label{Littlewood}
\begin{aligned}
\int_{\sigma}^{\sigma_0}N_F(u, T) du
& = \frac{1}{2\pi} \int_{T}^{2T} \log |F(\sigma+it)| dt - \frac{1}{2\pi} \int_{T}^{2T} \log |F(\sigma_0 + it)| dt\\
& \ \ +  \frac{T}{2 \pi} (\sigma- \sigma_0 ) \log n_0 +O_F(\log T),
\end{aligned}
\end{equation}
where $n_0$ is the smallest positive integer such that 
$   \alpha_{F} (n_0)  \neq 0 $.
In order to estimate the integrals on the right hand side of this asymptotic formula, we shall construct a probabilistic random model for $F(\sigma+it)$. Recall that 
$$ F(\sigma+it)= \sum_{j=1}^J b_j L_j(\sigma+it)=\sum_{j=1}^J b_j\prod_p \prod_{i=1}^d \bigg(1-\frac{ \alpha_{j,i}(p)}{p^{\sigma+it}} \bigg)^{-1}.
$$ 
Let $\{\X(p)\}_{p}$ be a sequence of independent random variables, indexed by the prime numbers, and uniformly distributed on the unit circle. For $1\leq j\leq J$ we consider the random Euler products 
$$
L_j(\sigma, \X):=\prod_p \prod_{i=1}^d \bigg(1-\frac{\alpha_{j,i}(p) \X(p)}{p^{\sigma}} \bigg)^{-1}.
$$
These products converge almost surely for $\sigma>1/2$ by Kolmogorov's three series theorem. We shall prove that the integral $\frac{1}{T} \int_{T}^{2T}\log |F(\sigma+it)|dt$ is very close to the expectation 
 of $\log |F(\sigma, \X)|$, where the probabilistic random model $F(\sigma, \X)$ is defined by 
$$ F(\sigma, \X):= \sum_{j=1}^J b_j L_j(\sigma, \X).$$
\begin{thm}\label{AsympLogComb} Let $J\ge 2$ be an integer, $\xi=\max_{j\le J} \xi_j$, and 
$$
0<\nu <1/(12 J+ 7 + 16 J \sqrt{3\xi})
$$ 
be a fixed real number. Let
 $T$ be large. There exists a positive constant $\beta>0$ such that for $\sigma= 1/2+1/G(T)$ with $ \log \log T \leq G(T)\leq (\log T)^{\nu}$ we have 
$$ 
\frac{1}{T} \int_{T}^{2T}\log \big|F(\sigma+it)\big|dt = \ex\left(\log \big|F(\sigma, \X)\big|\right)+ O\left(\frac{1}{(\log T)^{\beta}}\right),
$$
where here and throughout we denote by $\ex(\cdot)$ the expectation.
\end{thm}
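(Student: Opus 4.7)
The plan is to approximate both sides of the claimed identity by a common intermediate quantity built from short Dirichlet polynomials, then to match the two approximations via joint moments. Using A3 and A4 in the standard way (a Perron-type contour shift for $\log L_j$, with A4 bounding the contribution of zeros of $L_j$ near the critical line), I would show that outside an exceptional set $\mathcal{E}\subset[T,2T]$ of measure $\ll T/(\log T)^A$, each $\log L_j(\sigma+it)$ is within $O(1/(\log T)^{A'})$ of its short prime-power sum
$$ P_j(s) := \sum_{p^k \leq X} \frac{\beta_{L_j}(p^k)}{p^{ks}},\qquad X := T^{\eta}, $$
with $\eta$ a small constant depending on $\nu$. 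Define the arithmetic proxy $F_X(s) := \sum_{j\leq J} b_j \exp(P_j(s))$ and its probabilistic counterpart $F_X(\sigma,\X) := \sum_{j\leq J} b_j \exp(P_j(\sigma,\X))$. On $[T,2T]\setminus\mathcal{E}$ one has $L_j(\sigma+it) = e^{P_j(\sigma+it)}(1+o(1))$, so $\log|F(\sigma+it)| = \log|F_X(\sigma+it)| + o(1)$ provided $|F_X(\sigma+it)|$ is not anomalously small relative to $\max_j |L_j(\sigma+it)|$. In parallel, $\ex(\log|F_X(\sigma,\X)|) = \ex(\log|F(\sigma,\X)|) + O((\log T)^{-\beta'})$, which is easier and follows from almost-sure convergence of the random Euler product plus a Kolmogorov-type tail bound on $\log L_j(\sigma,\X) - P_j(\sigma,\X)$.

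Next, combining A5 with the orthogonality
$$ \frac{1}{T}\int_T^{2T}\Bigl(\frac{m}{n}\Bigr)^{it}\,dt = \delta_{m,n} + O\Bigl(\frac{1}{T|\log(m/n)|}\Bigr), $$
and the fact that $X = T^\eta$ is a small power of $T$, the joint mixed moments of the arithmetic tuple $(P_1(\sigma+it),\dots,P_J(\sigma+it))$ over $t\in[T,2T]$ match those of the probabilistic tuple $(P_1(\sigma,\X),\dots,P_J(\sigma,\X))$ up to negative powers of $T$. For $k$ in a moderate range this yields
$$ \frac{1}{T}\int_T^{2T}|F_X(\sigma+it)|^{2k}\,dt = \ex\bigl(|F_X(\sigma,\X)|^{2k}\bigr) + O(T^{-c_k}), $$
together with analogous identities for mixed quantities $F_X^k\overline{F_X}^\ell$. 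To convert these into control on the integral of $\log|F_X|$, I would split $\log|F_X| = \log^{+}|F_X| - \log^{+}|F_X|^{-1}$: the positive part is handled by H\"older's inequality together with the moment matching, while the negative part is supported on the small-value set $\{t\in[T,2T]:|F_X(\sigma+it)|\le 1\}$ and is estimated via the density bound
$$ \me\{t\in[T,2T]:|F_X(\sigma+it)|\le \delta\} \ll \delta\cdot T\cdot(\text{polylog }T), $$
itself obtained by Fourier-inverting the characteristic function $u\mapsto \ex(|F_X(\sigma,\X)|^{iu})$ and transferring via moment matching.

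The main obstacle is precisely the small-value density estimate above, together with the coupling of the two near-zero sets (those of $F_X$ and of $F$) in the replacement step: unlike for a single $L_j$, where near-zeros are rare by A4, the linear combination $F_X$ can produce additional near-zeros from destructive interference between the $\exp(P_j)$, and uniform control of these requires simultaneous moment estimates of sufficient precision, plus an argument showing that the replacement $\log|F|\mapsto \log|F_X|$ is valid even on the boundary of the small-value set. The constraint $\nu<1/(12J+7+16J\sqrt{3\xi})$ records the tight balance among the length $X$ of the approximation, the permitted size of $\mathcal{E}$ from A4, the number of matched moments, and the admissible density of small values of $F_X$, and the competition between these four quantities is what ultimately fixes the admissible range of $G(T)$. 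Combining the steps above yields the stated asymptotic with some $\beta>0$.
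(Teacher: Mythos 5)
Your plan has the right shape---Dirichlet-polynomial approximation via A4, matching arithmetic and probabilistic averages, and isolating the small-value set of the linear combination as the crux---but there are two substantive gaps that the paper's argument is specifically built to close and that your outline does not.

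First, the choice $X = T^{\eta}$ for the polynomial length is too long. To convert moment matching of the $P_j$'s into anything uniform, you need the orthogonality $\frac1T\int_T^{2T}(m/n)^{it}\,dt = \delta_{m,n}+O(1/(T|\log(m/n)|))$ to remain accurate for products $m,n$ up to $X^{2k}$, which with $X=T^{\eta}$ forces $k = O(1/\eta)$, i.e.\ only boundedly many moments. Finitely many matched moments with $O(T^{-c})$ error are nowhere near enough to control $\ex(\log|\cdot|)$, since $\log|\cdot|$ is unbounded above and has a singularity at $0$. The paper instead uses $Y = \exp(O(G(T)\log\log T))$, which in the range $G(T)\leq(\log T)^{\nu}$ is subpolynomial in $T$ and therefore compatible with taking $k$ as large as $\asymp \log T/(G(T)\log\log T)$; this is exactly what lets it Taylor-expand the characteristic function of the $2J$-dimensional vector $\Lv(\sigma+it)$ to high degree (Proposition~\ref{characteristic}) and extract a discrepancy estimate with a \emph{power} of $\log T$ saving (Theorem~\ref{Discrepancy}).

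Second, and more fundamentally, you go from "joint mixed moments of $(P_1,\dots,P_J)$ match" to "moments of $|F_X|^{2k}$ match" to "the small-value density of $F_X$ is under control" without any mechanism to pass from moments to distribution functions. Since $F_X=\sum_j b_j e^{P_j}$ is a genuinely nonlinear (transcendental) function of the $P_j$'s, matching moments of the $P_j$'s does not directly give moments of $F_X$, and even if it did, a finite list of moments close in value does not give pointwise control of $\P(|F_X|\le\delta)$. The paper's solution is to keep everything at the level of the distribution of the $2J$-dimensional vector $\Lv$, prove a uniform discrepancy bound over rectangular boxes via Beurling--Selberg smoothing of the characteristic-function estimate, and only \emph{then} push forward through the explicit map $(\ub,\vb)\mapsto\log\bigl|\sum_j b_j e^{u_j+iv_j}\bigr|$ by a geometric covering argument (Proposition~\ref{CoveringResult}); the anticoncentration bound for the random model (Lemma~\ref{ConcentrationRandom}) is proved directly by a polar-coordinate/annulus-volume calculation, not by inverting a characteristic function of $|F_X(\sigma,\X)|$. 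You correctly name the small-value problem as the main obstacle, but your route to resolving it---Fourier-inverting $u\mapsto\ex(|F_X(\sigma,\X)|^{iu})$ and then "transferring via moment matching"---is exactly the step that needs the discrepancy bound you haven't constructed. Without it, the argument does not close.
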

Thus, in order to estimate $N_F(\sigma, T)$ it remains to investigate the function 
$$ \M(\sigma):=\ex\left(\log |F(\sigma, \X)|\right),$$
and more precisely to estimate the difference $\M(\sigma)- \M(\sigma+h)$ for small $h$. We shall investigate this quantity in Section \ref{sec:rand} and prove the following result.
\begin{thm} \label{MainRandom}
Let $G_1(T) := G(T) \frac{ \log G(T)}{ \log G(T) - 1 }$ and $G_2(T) := G(T) \frac{ \log G(T)}{ \log G(T) + 1}$. 
Assume that $G(T) \geq 4 $. 
Then   for each $i=1, 2$   we have
$$
\M\left(\frac{1}{2}+\frac{1}{G(T)}\right)-  \M\left(\frac{1}{2}+\frac{1}{G_i(T)}\right)= (-1)^i \frac{2 \pi K_0}{(\log G(T))^{3/2}}  +   O \bigg(\frac{1}{(\log G(T))^{9/4}}\bigg),
$$
 where the constant $K_0$ is defined in Theorem \ref{Main}.
     \end{thm}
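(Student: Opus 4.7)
The plan is to compute $\M(\sigma)$ asymptotically as $\sigma \to 1/2^+$ and then subtract. Set $H(\sigma) := \log(1/(2\sigma - 1))$, $X_j(\sigma) := \re \log L_j(\sigma, \X)$, $Y_j(\sigma) := \im \log L_j(\sigma, \X)$, and $V := \log G(T)$. Using the Euler product of $L_j(\sigma, \X)$, the independence of the $\X(p)$'s across primes, and assumption A5, the joint characteristic function of $(X_j(\sigma))_{j=1}^J$ can be shown, with quantitative error, to be close to that of a centered Gaussian vector with diagonal covariance matrix whose diagonal entries are $\xi_j H(\sigma)/2$.

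Because the $X_j(\sigma)$ have standard deviation of order $\sqrt H \to \infty$, a single term almost surely dominates $F(\sigma, \X) = \sum_j b_j e^{X_j + i Y_j}$. Letting $n = n(\X)$ be the index with $X_n = \max_j X_j$, we factor out the dominant term:
\[
\log|F(\sigma, \X)| = X_n + \log|b_n| + \log\Bigl|1 + \sum_{j \neq n} \tfrac{b_j}{b_n} e^{(X_j - X_n) + i(Y_j - Y_n)}\Bigr|.
\]
The residual has expectation $O(1)$; the delicate contribution comes from rare near-cancellation events, where two or more $X_j$ lie within $O(1)$ of the maximum and the corresponding phases $Y_j$ cancel, and this is controlled using the non-degeneracy of the joint density of $(X_j, Y_j)_j$. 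Combining with the Gaussian approximation and the change of variables $x_j = u_j\sqrt H$ in the resulting integral yields
\[
\M(\sigma) = 4\pi K_0 \sqrt{H(\sigma)} + \mc E(\sigma),
\]
where $K_0$ is the constant from Theorem \ref{Main} (the prefactor matches after the rescaling by $\sqrt H$) and $\mc E$ is a remainder.

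From the definitions, $H(\sigma_i) - H(\sigma) = \log(V/(V \mp 1)) = (-1)^{i+1}/V + O(V^{-2})$, and since $H(\sigma) = V - \log 2$, a Taylor expansion of the square root gives
\[
4\pi K_0\bigl(\sqrt{H(\sigma)} - \sqrt{H(\sigma_i)}\bigr) = (-1)^i \frac{2\pi K_0}{V^{3/2}} + O(V^{-5/2}),
\]
which matches the main term stated in the theorem. The main obstacle is showing that the remainder difference $\mc E(\sigma) - \mc E(\sigma_i) = O(V^{-9/4})$; since $|H(\sigma) - H(\sigma_i)| \asymp 1/V$, this requires bounding the $\sigma$-derivative of both the Gaussian-approximation error and the residual-expectation by $O(V^{-5/4})$. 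This will be achieved by exploiting the analyticity of the joint characteristic function of $(X_j(\sigma))_j$ in $\sigma$, together with the quantitative $O(1/\log x)$ error term in assumption A5 (which, after partial summation, yields control on the $\sigma$-derivative of the covariance structure), so that a careful Fourier-inversion/Laplace-type analysis of the approximate joint density delivers the required smoothness in $H$.
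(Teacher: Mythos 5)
Your high-level plan matches the paper's: approximate the joint density of $(\log|L_j|,\arg L_j)_j$, extract the dominant term by decomposing $\mathbb{R}^J$ according to which $u_n$ is largest, recover the main term from the change of variables $\ub \mapsto \sqrt{\log G(T)}\,\ub$, and then obtain the asymptotic by \emph{differencing} at $\sigma$ and $\sigma_i$ rather than by proving an asymptotic for $\M(\sigma)$ itself. Your main-term computation (Taylor expansion of $\sqrt{H}$, covariance $\xi_j H/2$, matching with $K_0$) is correct. However, the step you flag yourself as ``the main obstacle'' — showing $\mathcal E(\sigma)-\mathcal E(\sigma_i)=O(V^{-9/4})$ — is where essentially all the work lies, and your sketch does not close it.

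Two concrete gaps. First, a pure Gaussian approximation for the density is not accurate enough: the paper (Lemma \ref{lem asymp H_T}) shows $H_T(\ub,\vb)$ equals a Gaussian times a degree-$5$ polynomial $P_T(\ub,\vb)$ (an Edgeworth-type correction, built from a power-series expansion of $\widehat{\Phi}_T^{\mathrm{rand}}$ up to degree $5$ with explicit $\sigma$-dependence of the coefficients) plus an error $O((\log G(T))^{-J-3})$; the polynomial is needed so that the induced error in $\M$ is beyond $V^{-9/4}$. Your proposal only mentions matching to a Gaussian. Second, the residual term $\E\bigl[\log|1+\sum_{j\ne n}(b_j/b_n)e^{(X_j-X_n)+i(Y_j-Y_n)}|\bigr]$ is not just $O(1)$; the paper's quantity $\mathcal E_n(\kb,\lb,\sigma)$ is $O(V^{-1/4})$, but even that does not suffice. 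The paper bounds the \emph{difference} $\mathcal E_n(\kb,\lb,\sigma)-\mathcal E_n(\kb,\lb,\sigma_i)$ directly: a substitution puts both integrals on the same scaling, isolates the factor $\log G(T)/\log G_i(T)-1=O(V^{-2})$, and then the integral of $|\log|1+\cdots||$ against the Gaussian-times-polynomial weight is controlled by the method of Lemma 2.5 of \cite{Le4}, giving $V^{-2}\cdot V^{-1/4}=V^{-9/4}$. Your alternative — bound the $H$-derivative of $\mathcal E$ by $O(V^{-5/4})$ and integrate over an $H$-interval of length $\asymp 1/V$ — is logically coherent, but differentiating $\log|w|$ under the integral produces a first-order pole on the zero set of $w=1+\sum_{j\ne n}(b_j/b_n)e^{\cdots}$, rather than the logarithmic singularity of $\log|w|$ itself; justifying the interchange and obtaining the quantitative bound $V^{-5/4}$ requires a separate non-trivial argument that you do not supply. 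As written, the proof is a correct skeleton with the decisive technical lemma asserted rather than proved.
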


We now show how to deduce Theorem \ref{Main} from Theorems \ref{AsympLogComb} and \ref{MainRandom}.  
\begin{proof}[Proof of Theorem \ref{Main} assuming Theorems \ref{AsympLogComb} and \ref{MainRandom}]
First, note that 
$ \frac12 + \frac{1}{G_i(T)} = \sigma+ (-1)^i /(G(T)\log G(T)).$ 
Since $N_F (w,T)$ is a decreasing function of $w$ for each $T$, we see that
\begin{equation}\label{IntegralLittlewood}
\int_{\sigma}^{ \frac12 + \frac{1}{G_2 (T)}} N_F (w,T) dw \leq \frac{N_F (\sigma,T)}{ G(T) \log G(T) } \leq  \int_{\frac12 + \frac{1}{G_1(T)}}^{\sigma} N_F (w,T)dw. 
\end{equation}
By \eqref{Littlewood} and Theorems \ref{AsympLogComb} and \ref{MainRandom}, we obtain
\begin{align*}
   \frac{2\pi}{T} &  \int_{\sigma}^{ \frac12 + \frac{1}{G_i (T)}} N_F (w, T)dw   \\
  & =  \frac1T \int_T^{2T} \log|F(\sigma +it)|  -  \log \bigg| F \bigg(\frac12 + \frac{1}{G_i(T)} +it  \bigg)\bigg| dt + O\left(\frac{1}{G(T) \log G(T) }\right) \\
  & = \M(\sigma)- \M\left(\frac{1}{2}+ \frac{1}{G_{i}(T)}\right)+  O\left(\frac{1}{G(T) \log G(T) }\right) \\
  & = (-1)^i \frac{2 \pi K_0}{(\log G(T))^{3/2}}  +   O \bigg(\frac{1}{(\log G(T))^{9/4}}\bigg).
  \end{align*}
Inserting these estimates in \eqref{IntegralLittlewood} completes the proof. 
\end{proof}

We next describe the different ingredients that are used in the proof of Theorem \ref{AsympLogComb}. The first is a discrepancy bound for the joint distribution of the values of the $L$-functions $L_j(s)$, which generalizes the results of 
\cite{LLR} for the Riemann zeta function. For $\sigma>1/2$ we let $$ \Lv(\sigma+it)=\Big(\log |L_1(\sigma+it)|, \dots, \log |L_J(\sigma+it)|, \arg L_1(\sigma+it), \dots, \arg L_J(\sigma+it) \Big),$$
and similarly define the random vector
$$ \Lv(\sigma, \X)=\Big(\log |L_1(\sigma, \X)|, \dots, \log |L_J(\sigma, \X)|, \arg L_1(\sigma, \X), \dots, \arg L_J(\sigma, \X) \Big).$$
For  a Borel set $ \B$ in $\mathbb R^{2J}$ and for~$\sigma = 1/2+ 1/G(T) $, we define
\begin{equation}\label{PhiT def}
\Phi_T (\B ):= \frac1T \mathrm{meas}  \{ t\in[T, 2T] :  \Lv (\sigma +it )\in \B  \}  
\end{equation}
and 
 \be\label{Phi rand def}
\Phrand  (\B ):=
 \mathbb{P}(\Lv(\sigma , \X) \in \B),
 \ee
 where here and throughout, meas will denote the Lebesgue measure on $\mathbb{R}$. 
 We will prove that the measure $ \Phrand $ is absolutely continuous and investigate its density function $H_T  (\ub, \vb)$ in Section \ref{sec:rand}.
 
We define the \emph{discrepancy} between these two distributions as
$$
 \D_T(\B):= \Phi_T (\B  )- \Phrand (\B ) .
$$ 
Then we prove the following result which generalizes Theorem 1.1 of \cite{LLR}, and might be of independent interest.  
\begin{thm}\label{Discrepancy}
Let $T$ be large and $\sigma=1/2+ 1/G(T)$ where $  \log\log T\leq G(T)\leq \sqrt{\log T}/\log\log T $. Then we have 
 $$
 \sup_{\mathcal{R}} \big|\D_T(\mathcal R)\big| \ll \frac{\sqrt{G(T)}\log\log T}{\sqrt{\log T}},
$$
where $\mathcal R$ runs over all rectangular boxes of $\mathbb{R}^{2J}$ (possibly unbounded) with sides parallel to the coordinate axes. 
\end{thm}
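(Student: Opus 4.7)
The plan is to adapt the Fourier-analytic framework developed in~\cite{LLR} to the joint distribution of $J$ distinct $L$-functions. The first step is a reduction via a multidimensional Beurling--Selberg inequality in $2J$ variables (built from tensor products of one-dimensional Selberg majorants and minorants for intervals): for any rectangular box $\mathcal R\subset\mathbb{R}^{2J}$ with sides parallel to the axes and any $U>0$,
$$\bigl|\D_T(\mathcal R)\bigr|\ll \int_{\|\xb\|_\infty,\,\|\yb\|_\infty\le U} \bigl|\Phhat(\xb,\yb)-\Phrhat(\xb,\yb)\bigr|\,W(\xb,\yb)\,d\xb\,d\yb + \frac{\xi\log\log T}{U},$$
where $W$ is a smooth polynomially-decaying weight and the tail term comes from a Chebyshev estimate on $\ex(\|\Lv(\sigma,\X)\|^2)\ll \xi\log\log T$, itself a consequence of A5. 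The problem reduces to bounding $|\Phhat-\Phrhat|$ on the bounded Fourier box of radius~$U$.

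Second, using the zero-density hypothesis A4 together with standard approximations (in the spirit of Selberg's work on Dirichlet $L$-functions), I would prove that for a parameter $X=X(T)$ and all $t\in[T,2T]$ outside an exceptional set of measure $\ll T(\log T)^{-A}$ (any fixed $A>0$),
$$\log L_j(\sigma+it) = \sum_{p\le X}\frac{\beta_{L_j}(p)}{p^{\sigma+it}} + \sum_{p\le X^{1/2}}\frac{\beta_{L_j}(p^2)}{p^{2(\sigma+it)}} + O\!\left(\frac{1}{(\log X)^k}\right),$$
with an analogous truncation of $\log L_j(\sigma,\X)$ holding almost surely for the random model. On the good set, $\Phhat(\xb,\yb)$ is thus, up to acceptable error, the characteristic function of the truncated prime-power sums, and similarly for $\Phrhat$ after discarding a probabilistic event of negligible measure.

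The comparison of moments is then carried out by Taylor-expanding the exponentials and invoking the Montgomery--Vaughan mean value estimate
$$\int_T^{2T}\Bigl|\sum_{n\le Y}a_n n^{-it}\Bigr|^2 dt = T\sum_{n\le Y}|a_n|^2 + O\!\Bigl(\sum_{n\le Y} n|a_n|^2\Bigr),$$
valid for $Y\le T^{1-\epsilon}$. The Selberg orthonormality conjecture A5 plays the key structural role: mixed moments involving $\beta_{L_j}(p)\overline{\beta_{L_k}(p)}$ with $j\ne k$ stay bounded, while the diagonal ($j=k$) contributions reproduce precisely the moments of $\Lv(\sigma,\X)$ encoded by $\Phrhat$. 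Summing the Taylor series and tracking error terms then yields a power-saving bound on $|\Phhat-\Phrhat|$ throughout the Fourier box.

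The main obstacle, and the source of the $\sqrt{G(T)}$ in the final bound, is the parameter optimization. The truncation $X$ must be taken large enough that $(\log X)^{-k}$ is negligible for all relevant orders $k$, yet small enough that when the exponentials are expanded to $k$-th order the resulting Dirichlet polynomial has length $\le X^k \le T^{1-\epsilon}$, so that the mean value estimate remains applicable. These two constraints force $k\asymp G(T)$ and $\log X\asymp (\log T)/G(T)$, which caps the usable Fourier range at $U\asymp \sqrt{(\log T)/G(T)}$ up to logarithmic factors. Substituting this into the Beurling--Selberg step produces the asserted discrepancy bound of size $\sqrt{G(T)}\log\log T/\sqrt{\log T}$, with the extra $\log\log T$ reflecting the logarithmic variance $\xi\log\log T$ governing the tails of $\Phrand$ in the Chebyshev step.
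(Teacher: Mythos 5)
Your overall strategy---a Beurling--Selberg reduction to comparing the Fourier transforms $\Phhat$ and $\Phrhat$, then a Dirichlet-polynomial approximation via the zero-density hypothesis A4, then moment matching via the Selberg orthonormality conjecture A5---is the same as the paper's. However, your parameter optimization contains a genuine error that, if followed literally, would yield a much weaker Fourier range than what is needed, especially when $G(T)$ is near its lower limit $\log\log T$.

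The problem is in your constraint (1), namely that ``$X$ must be taken large enough that $(\log X)^{-k}$ is negligible for all relevant orders $k$.'' In the paper's Lemma \ref{lem : Dir Poly approx}, the truncation error for $\log L_j(\sigma+it) \approx R_{L_j, Y}(\sigma+it)$ outside a small exceptional set is $O\bigl((\log T)^{-(A/2-2)}\bigr)$ where $Y = e^{A G(T) \log\log T}$. Crucially, this error is a fixed power of $\log T$ (it does not degrade as the moment order $k$ increases), and it is made as small as needed by enlarging the fixed constant $A$, not by enlarging $Y$ past $e^{O(G(T)\log\log T)}$. So the truncation length should be taken $\log Y \asymp G(T)\log\log T$, not $(\log T)/G(T)$ as you propose. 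Your $\log X \asymp (\log T)/G(T)$ is far larger, and when combined with the mean-value constraint $X^{2k}\leq T^{1-\epsilon}$ it caps $k \asymp G(T)$. With the paper's shorter truncation one instead gets $N \asymp \log T/(G(T)\log\log T)$, which for $G(T) \approx \log\log T$ is of order $\log T/(\log\log T)^2$, vastly larger. Since the Taylor-remainder estimate requires
$$\|\xb\|_\infty + \|\yb\|_\infty \ll \sqrt{N / \log\log Y} \asymp \frac{\sqrt{\log T}}{\sqrt{G(T)}\,\log\log T},$$
your smaller $k\asymp G(T)$ would only give $U \ll \sqrt{G(T)/\log\log T}$, which for $G(T) = \log\log T$ is $O(1)$, whereas the paper's range is $\asymp \sqrt{\log T}/(\log\log T)^{3/2}$. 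Your claimed cap $U\asymp\sqrt{(\log T)/G(T)}$ does not follow from your choice of $X$ and $k$; it is the correct order only after the truncation length is taken as $e^{O(G(T)\log\log T)}$.

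Two smaller discrepancies with the paper's proof: (i) the tail term in your Beurling--Selberg step should be $O(1/U)$, not $O(\xi\log\log T/U)$; the $\log\log T$ in the final bound enters through the cap on $U$ (because the variance $\asymp \log\log Y$ of the truncated Dirichlet polynomial appears in the Taylor remainder), not through a Chebyshev tail of $\Phrand$. (ii) The paper first uses the large-deviation estimate (Lemma \ref{LargeDevLj}) to restrict to bounded rectangles in $[-(\log\log T)^2, (\log\log T)^2]^{2J}$, and then applies one-dimensional Beurling--Selberg majorants and minorants iteratively, replacing one factor at a time and accruing $O(1/\Delta)$ per step; your proposed tensor-product multidimensional inequality is a reasonable alternative, but you still need some a priori localization of the measure to control the resulting integral.
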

We shall use this result to approximate the integral $\frac 1T\int_T^{2T} \log |F(\sigma+it)|dt$ by the expectation $\ex(\log| F(\sigma, \X)|)$. However, in doing so we need to control the large values and the logarithmic singularities of both $\log |F(\sigma+it)|$ and $\log|F(\sigma, \X)|$. To this end we prove the following propositions. 

 \begin{pro} \label{MomentsLogF}
  Let $T$ be large, and $\sigma=1/2+1/G(T)$ with $2<G(T)\leq c_0\sqrt{ \frac{\log T}{\log\log T} },$ for some small constant $c_0>0$. There exist positive constants $C_1, C_2>0$  such that for every positive integer $k \leq (\log T)/(C_1 G(T)\log\log T)$ we have

$$   
 \frac1T  \int_T^{2T} \big|\log  |F(\sigma+it)|\big|^{2k}  dt \ll (C_2 k \log \log T)^k G(T)^{3k+2} ( \max\{ k, G(T)^{3/2} \log G(T) \} )^{2k}.
$$
\end{pro}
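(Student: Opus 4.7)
The plan is to split $\log|F(\sigma+it)| = \log^+|F(\sigma+it)| - \log^-|F(\sigma+it)|$ and bound the $2k$-th moment of each piece separately, then recombine via $|a+b|^{2k}\le 2^{2k-1}(|a|^{2k}+|b|^{2k})$.

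\textbf{Upper part.} Since $|F(\sigma+it)|\le \sum_{j}|b_j||L_j(\sigma+it)|$, one has $\log^+|F(\sigma+it)|\le \max_j\log^+|L_j(\sigma+it)| + O(1)$, so it suffices to bound the $2k$-th moment of $\log|L_j(\sigma+it)|$ for each fixed $j$. Following the classical Selberg-Tsang strategy, I will approximate $\log L_j(\sigma+it)$ by the short prime sum $P_j(\sigma+it):=\sum_{p\le y}\beta_{L_j}(p)/p^{\sigma+it}$ with $y$ a suitable power of $G(T)$. The zero density hypothesis A4 ensures that, off an exceptional set of measure $\ll T/(\log T)^A$, this approximation holds with error $\ll G(T)^{3/2}\log G(T)$. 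The $2k$-th moment of $P_j$ is then handled by direct expansion and A5, giving a contribution of order $T(C k\log\log T)^k$, where the $\log\log T$ comes from $\sum_{p\le y}|\beta_{L_j}(p)|^2/p$ via A5. On the exceptional set one uses the convexity/functional-equation bound $|\log|L_j(\sigma+it)||\ll \log T$, which is absorbed by taking $A$ large in terms of $k$ and the range of $G(T)$.

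\textbf{Lower part.} The small values of $|F|$ come from zeros of $F$ near the line $\re(s)=\sigma$, and are the main source of difficulty since $F$ is a linear combination rather than a single $L$-function. I will use the distribution-function identity
$$\frac{1}{T}\int_T^{2T}(\log^-|F(\sigma+it)|)^{2k}dt = \frac{2k}{T}\int_0^\infty y^{2k-1}\mu(y)\,dy,$$
where $\mu(y):=\me\{t\in[T,2T]:|F(\sigma+it)|\le e^{-y}\}$, and bound $\mu(y)$ by a Jensen-Littlewood argument: if $|F(\sigma+it_0)|\le e^{-y}$, then applying Jensen's formula on a disc of radius $r\asymp 1/G(T)$ centred slightly to the right of $\sigma+it_0$ (well inside the region of absolute convergence) and using the upper-part bound to control $\max|F|$ on the boundary forces $F$ to have at least $\gg y/\log\log T$ zeros in a nearby rectangle. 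Combining with the zero-counting estimate $N_F(2T)-N_F(T)\ll T\log T$ and a covering argument in $t_0$ will then give a bound on $\mu(y)$ strong enough for the integral in $y$ to converge and produce the remaining polynomial-in-$G(T)$ factors in the stated estimate.

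\textbf{Main obstacle.} The main technical difficulty is making the Jensen-zero-counting argument sharp enough to produce the factor $\max\{k, G(T)^{3/2}\log G(T)\}^{2k}$ uniformly in $k$ across the whole stated range. This requires careful control of $\max|F|$ on small discs via the upper-part bound, an optimized choice of the disc radius $r$, and a clean matching of the two regimes $k\lesssim G(T)^{3/2}\log G(T)$ (where the Dirichlet-polynomial moment dominates) and $k\gtrsim G(T)^{3/2}\log G(T)$ (where the approximation error dominates and the $k^{2k}$ Stirling-type moment growth takes over). The extra polynomial factor $G(T)^{3k+2}$ appears as a by-product of this optimization and the covering argument; it is presumably far from sharp, but suffices for the application of Proposition~\ref{MomentsLogF} to the proof of Theorem~\ref{AsympLogComb}.
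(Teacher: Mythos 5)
Your decomposition into $\log^+$ and $\log^-$ parts and the general philosophy (Dirichlet-polynomial approximation for large values, zero counting for small values) is in the spirit of the paper's argument, but the geometry you propose for the small-value bound has a real gap, and the way you propose to close it (via the global count $N_F(2T)-N_F(T)\ll T\log T$) is quantitatively too weak.

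Concretely: you suggest applying Jensen's formula on a disc of radius $r\asymp 1/G(T)$ centred ``slightly to the right of $\sigma+it_0$''. For this disc to reach $\sigma+it_0$ the centre must itself lie within $O(1/G(T))$ of the critical line, and at such a centre $F$ can be arbitrarily small (or vanish), so there is no usable reference value $\log|F(\text{centre})|$ to anchor Jensen's inequality. Moreover, the ``upper-part bound'' you invoke to control $\max|F|$ on the boundary circle is an $L^{2k}$ estimate, not a pointwise one, so it cannot bound the boundary max at a given $t_0$. The paper sidesteps both difficulties by choosing a large fixed abscissa $\sigma_0$ (so $f(\sigma_0+it)$ is bounded away from $0$ uniformly) and a disc of radius $r\asymp\sigma_0-1/2$ that reaches the critical line, and then applying Landau's lemma (Lemma~\ref{Landau}) rather than plain Jensen: the error in Landau's lemma scales like $1/\delta^2$ where $\delta\asymp 1/G(T)$ is the buffer to the target point, and this is exactly where the factors $G(T)^{3}\log G(T)$ and $(G(T)^{3/2}k)^{2k}$ come from. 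Your attribution of a $G(T)^{3/2}\log G(T)$ error to the Dirichlet-polynomial approximation is also off: Lemma~\ref{lem : Dir Poly approx} gives an error $O((\log T)^{-A/2+2})$, which is negligible, and the powers of $G(T)$ enter only through the Landau geometry.

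Second, even granting a local zero count $\gg y/\log\log T$ near each bad $t_0$, combining it with $N_F(2T)-N_F(T)\ll T\log T$ via a covering argument yields only $\mu(y)\ll T\log T\log\log T/(yG(T))$, i.e.\ decay like $1/y$. Plugging this into $\frac{2k}{T}\int_0^\infty y^{2k-1}\mu(y)\,dy$ gives a divergent integral for every $k\ge 1$. You need super-polynomial decay in $y$, which the global count alone cannot deliver. The paper instead produces a \emph{pointwise} inequality $\log|F(\sigma+it)|\le\sum_{|\rho-s_0|\le r-\delta}\log|\sigma+it-\rho|+cG(T)^3\log G(T)\log M_r(s_0)$, bounds the local zero count itself by $N_{r-\delta}(s_0)\ll G(T)\log M_r(s_0)$ via a Jensen step, and then controls the $L^{2k}$-norm of $\log M_r(s_0)$ (equivalently of $\log(|L_j(s_n)|+3)$ at well-chosen points $s_n$) through subharmonicity of $\log|L_j|$, the zero-density hypothesis A4, and the Dirichlet-polynomial moment Lemma~\ref{lem:dir poly mmt 2}. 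The whole point is that this last quantity has moment $\ll(Ck\log\log T)^k$, which is far smaller than anything of size $\log T$, and that refinement is exactly what your proposal is missing.
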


 \begin{pro} \label{MomentsLogR}
Let $T$ be large, and  $\sigma=1/2+1/G(T)$ with $2<G(T)\leq  \log T.$ There exists a constant $C_3>0$ such that for every integer $k \geq 1$ we have
$$  \E \left[ \big|\log  |F ( \sigma , \X)|\big|^{2k} \right]   \ll (\log \log T)^{J} (C_3 k(k +\log\log T))^k. 
$$
 \end{pro}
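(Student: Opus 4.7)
\emph{Proof proposal.} Split $\bigl|\log|F|\bigr|^{2k}\leq 2^{2k-1}\bigl((\log^+|F|)^{2k}+(\log^-|F|)^{2k}\bigr)$ and treat the two parts separately. For the positive part, the triangle inequality $|F|\leq J\max_j|b_j L_j|$ gives $\log^+|F|\leq \max_j\log^+|L_j(\sigma,\X)|+O(1)$. Writing
\[
\log L_j(\sigma,\X)=\sum_{p}\sum_{m\geq 1}\frac{\beta_{L_j}(p^m)\X(p)^m}{p^{m\sigma}}
\]
exhibits $\log L_j(\sigma,\X)$ as a sum of independent centered bounded random variables whose variance is $\asymp\xi_j\log G(T)$, by assumption A5 and partial summation (with $\sigma=1/2+1/G(T)$). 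Sub-Gaussian moment inequalities for such sums (via hypercontractivity of Steinhaus chaos, or a direct combinatorial expansion) yield $\E\bigl[|\log L_j(\sigma,\X)|^{2k}\bigr]\ll (Ck\log G(T))^k\ll (Ck\log\log T)^k$, using $G(T)\leq\log T$. Summing over $j$ gives $\E[(\log^+|F|)^{2k}]\ll (C'k\log\log T)^k$, which is absorbed in the target since $k\log\log T\leq k(k+\log\log T)$ and $(\log\log T)^J\geq 1$.

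For the negative part, the layer-cake formula
\[
\E\bigl[(\log^-|F|)^{2k}\bigr]=\int_0^\infty 2k\,t^{2k-1}\,\P\bigl(|F(\sigma,\X)|<e^{-t}\bigr)\,dt
\]
reduces matters to the small-ball estimate
\[
\P\bigl(|F(\sigma,\X)|<\epsilon\bigr)\ll(\log\log T)^J\epsilon^2\qquad\text{for }0<\epsilon\leq 1.
\]
Granting this, splitting the integral at $t_0:=(J/2)\log\log\log T$ bounds the $t\leq t_0$ portion by the harmless $t_0^{2k}$, while the tail is at most $(\log\log T)^J\int_0^\infty 2kt^{2k-1}e^{-2t}\,dt=(\log\log T)^J(2k)!/4^k\ll(\log\log T)^J(Ck)^{2k}$. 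Since $(Ck)^{2k}=(Ck^2)^k\leq (Ck(k+\log\log T))^k$, the whole expression fits into $(\log\log T)^J(C_3 k(k+\log\log T))^k$ for a suitably large $C_3$.

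To prove the small-ball estimate, it suffices to show $\|f_F\|_\infty\ll(\log\log T)^J$, where $f_F$ is the density of $F(\sigma,\X)$ on $\mathbb{C}$, for then $\P(|F|<\epsilon)\leq\pi\epsilon^2\|f_F\|_\infty$. Writing $L_j=e^{Z_j}$ with $Z_j=\log L_j(\sigma,\X)$, a quantitative multivariate central limit theorem (exploiting the independence of $\{\X(p)\}_p$ together with A5) shows that the law of $\vec Z=(Z_1,\dots,Z_J)$ on $\mathbb{C}^J$ is close to a complex Gaussian whose covariance matrix has determinant $\asymp\prod_j(\xi_j\log G(T))^2$; in particular $\|f_{\vec Z}\|_\infty\ll(\log G(T))^{-J}$. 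One then recovers $f_F$ via the change of variables $(Z_1,\dots,Z_J)\mapsto(F,Z_2,\dots,Z_J)$ (with real Jacobian $|b_1 L_1\cdots L_J|^2$) and integrates out $Z_2,\dots,Z_J$ against $f_{\vec Z}$. The main obstacle is that this Jacobian is neither bounded away from $0$ nor from $\infty$: the lognormal tails of the $L_j$'s produce singularities that must be controlled using the Gaussian decay of $f_{\vec Z}$ in the directions $|\re Z_j|\to\infty$. An equivalent route, likely cleaner, is to Fourier-invert the characteristic function $\E[e^{i\re(\bar\lambda F)}]$ and bound $\|f_F\|_\infty$ via its decay, which in turn follows from a local limit theorem for the prime-indexed sum $\sum_{p}\beta_{L_j}(p)\X(p)/p^\sigma$.
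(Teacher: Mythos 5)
There is a genuine gap in the treatment of the negative part. The small-ball estimate
$\P\bigl(|F(\sigma,\X)|<\epsilon\bigr)\ll(\log\log T)^J\epsilon^2$ for all $0<\epsilon\leq 1$
that your argument requires is \emph{false}, and the density bound
$\|f_F\|_\infty\ll(\log\log T)^J$ which you propose to prove it from is also false.
The reason is the event in which all of $|L_1(\sigma,\X)|,\ldots,|L_J(\sigma,\X)|$ are simultaneously
of size $\lesssim\epsilon$: writing $\epsilon=e^{-t}$, the condition
$\log|L_j(\sigma,\X)|<-t-O(1)$ for every $j$ already forces $|F(\sigma,\X)|\ll e^{-t}$, and since each
$\log|L_j(\sigma,\X)|$ is approximately $N(0,\tfrac12\xi_j\log G(T))$ and the $J$ coordinates are
essentially uncorrelated (A5), this event has probability of rough order
$\exp\bigl(-c\,t^2/\log G(T)\bigr)$.
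For $t$ in the range $\sqrt{\log G(T)}\lesssim t\lesssim\log G(T)$ (which for $G(T)$ near $\log T$ runs
up to $t\asymp\log\log T$), this is enormously larger than $(\log\log T)^J e^{-2t}$; already at
$t=\sqrt{\log G(T)}$ the true probability is bounded below by a positive constant. Equivalently, the
density of $F(\sigma,\X)$ on $\mathbb{C}$ blows up near the origin roughly like
$|w|^{-2}\exp\bigl(-\lambda(\log|w|)^2/\log G(T)\bigr)$, which at $|w|=G(T)^{-1/\lambda}$ reaches values
of order $G(T)^{1/\lambda}$, not a power of $\log\log T$. You flagged exactly this difficulty
(``the Jacobian is neither bounded away from $0$ nor from $\infty$''), but the sketch that follows does
not resolve it, and in fact cannot, because the statement being aimed at is not true.

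The omission is not just technical: this lognormal-tail regime is precisely what produces the
$(M+k)^k$ factor rather than $k^{2k}$ in the final bound. A correct small-ball estimate would be
closer to
$\P(|F(\sigma,\X)|<e^{-t})\ll(\log G(T))^{J}\bigl(e^{-2t}+e^{-ct^2/\log G(T)}\bigr)$,
and feeding the second term into your layer-cake integral produces $(Ck\log G(T))^k$, which together
with the $(Ck)^{2k}$ from the first term reconstitutes $(Ck(k+\log\log T))^k$ up to constants. With
such a corrected estimate your positive/negative-part strategy would go through, but as written the
negative part is broken.

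For comparison, the paper avoids the small-ball route entirely. It writes
$\E\bigl[|\log|F(\sigma,\X)||^{2k}\bigr]=\int_{\mathbb{R}^{2J}}|\log|\sum_j b_j e^{u_j+iv_j}||^{2k}H_T(\ub,\vb)\,d\ub\,d\vb$,
uses the Gaussian upper bound $H_T(\ub,\vb)\ll e^{-\lambda\sum_j(u_j^2+v_j^2)/\log G(T)}$ from
Lemma~\ref{lemma H_T bound 1}, and then invokes a single deterministic integral estimate
(Lemma~2.3 of \cite{Le4}) that simultaneously controls the $|\log r|^{2k}$ singularity on the
codimension-two cancellation surface (giving the $(Ck)^k\cdot k^k$ piece) and the
``all $u_j$ large and negative'' tail (giving the $M^k$ piece), with the prefactor $M^J$ coming from
the ambient Gaussian volume. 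This cleanly delivers $M^J(Ck)^k(M+k)^k$ with $M\asymp\log G(T)\leq\log\log T$,
without having to establish a sharp small-ball bound for $F(\sigma,\X)$ itself. Your positive-part
argument, by contrast, is essentially correct and corresponds to
Lemmas~\ref{lem:dir poly mmt 1} and~\ref{lem:dir poly mmt 2} in the paper.
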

We also need the following result on the large deviations of $\log L_j(\sigma+it)$ and $\log L_j(\sigma, \X)$. 
\begin{lem}\label{LargeDevLj}
Let $1\leq j\leq J$, and $T$ be large. Let $\sigma=1/2+1/G(T)$ with $\log\log T  \leq G(T)\leq c_0 \sqrt{\log T/\log\log T}$ for some small constant $c_0>0$. Then, there exists a positive constant $c_1 $ such that for all $\sqrt{ \log G(T)} \leq \tau \leq (\log\log T)^2$ we have 
$$ 
\frac{1}{T} \textup{meas} \{t\in [T, 2T] : |\log L_j(\sigma+it)| \geq \tau \} \ll \frac{\tau}{\sqrt{\log G(T)}} \exp\left(-\frac{\tau^2}{\xi_j \log G(T)+ c_1 G_0 (T)  }\right),
$$
where $G_0 (T) = \max\{\sqrt{\log G(T)}, \log_3 T\}$. 
Furthermore, the same bound holds for 
$$\pr( | \log L_j  (\sigma, \X ) | > \tau), $$
 in the same range of $\tau$. 
\end{lem}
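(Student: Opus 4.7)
The plan is to adapt Selberg's moment method to an individual $L$-function in our class. The three steps are: (a) approximate $\log L_j(\sigma+it)$ by a short Dirichlet polynomial $P_j(s) := \sum_{p^k \leq y}\beta_{L_j}(p^k)/p^{ks}$, with error $O(1)$, for all $t \in [T,2T]$ outside an exceptional set $\mathcal{B}_T$ of measure $\ll T^{1-c}$; (b) compute the $2k$-th moment of $P_j$; (c) apply Markov's inequality and optimize $k$ via Stirling. For Step (a) I would take $y = (\log T)^A$ with $A$ a large fixed constant, and invoke a Selberg-type contour argument adapted to the Selberg class under A1--A5 (in the spirit of \cite{Se}). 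The set $\mathcal{B}_T$ consists of those $t$ near which $L_j$ has anomalously many zeros with $\re(s) > 1/2 + \delta/G(T)$; the zero density assumption A4 bounds its measure.

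For Step (b), I would expand $|P_j(\sigma+it)|^{2k} = P_j^k\,\overline{P_j}^k$ and integrate termwise. The diagonal pairings give a main term $k!\,V_j^k\,T$, and the off-diagonal contributions are suppressed by the mean value theorem for Dirichlet polynomials as long as $y^k = T^{o(1)}$. Here $V_j := \sum_{p^k\leq y}|\beta_{L_j}(p^k)|^2/p^{2k\sigma}$; the higher prime powers contribute $O(1)$ to $V_j$, and the dominant $p$-sum is evaluated using A5 together with partial summation, yielding
$$
V_j = \xi_j \log G(T) + O(G_0(T))
$$
in the stated range of $G(T)$. The $G_0(T)$ error absorbs the $O(1/\log x)$ remainder in A5 (after partial summation), the repeated-prime lower-order contributions in the combinatorial moment expansion, and the truncation at $y$.

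For Step (c), Markov's inequality combined with the approximation in Step (a) gives, for any positive integer $k$,
$$
\frac{1}{T}\,\textup{meas}\bigl\{t \in [T,2T] : |\log L_j(\sigma+it)| \geq \tau\bigr\} \ll \frac{k!\,V_j^k}{(\tau-C)^{2k}} + T^{-c}.
$$
I would choose $k = \lfloor \tau^2/V_j\rfloor$: it is $\geq 1$ since $\tau \geq \sqrt{\log G(T)}$, and stays well within the admissible moment range since $\tau \leq (\log\log T)^2$. Stirling $k! \sim \sqrt{2\pi k}(k/e)^k$ then collapses the main term to $C\tau/\sqrt{V_j}\cdot\exp(-\tau^2/V_j)$, which is exactly the claimed bound upon replacing $V_j$ by the upper bound $\xi_j\log G(T) + c_1 G_0(T)$ in the exponent. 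The random case follows the same scheme with $\mathcal{B}_T = \emptyset$: independence of $\{\X(p)\}_p$ together with orthogonality $\ex[\X(p)^m\overline{\X(p)^n}] = \delta_{mn}$ give the exact moment identity $\ex|P_j(\sigma,\X)|^{2k} = k!\,V_j^k(1+o(1))$, and Markov--Stirling concludes identically.

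The main obstacle will be Step (a): the truncation length $y$ must be simultaneously large enough for $V_j$ to attain its Selberg limit $\xi_j\log G(T)$ to the advertised $O(G_0(T))$ precision, short enough that the mean value theorem in Step (b) is applicable for $k$ up to $\tau^2/V_j$, which can be as large as $(\log\log T)^4/\log G(T)$, and such that the exceptional set $\mathcal{B}_T$ coming from A4 stays polynomially small. The prescribed range $\log\log T \leq G(T) \leq c_0\sqrt{\log T/\log\log T}$ is precisely the window in which these three requirements are mutually compatible.
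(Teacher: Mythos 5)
Your overall strategy (Dirichlet-polynomial truncation, $2k$-th moment computation, Markov plus Stirling optimization) matches the paper's, but the choice $y=(\log T)^A$ with $A$ a fixed constant is too short to work uniformly over the stated range $\log\log T \le G(T) \le c_0\sqrt{\log T/\log\log T}$, and this breaks both Step~(a) and Step~(b). For Step~(a), the error in approximating $\log L_j(\sigma+it)$ at $\sigma=1/2+1/G(T)$ by a Dirichlet polynomial of length $y$ via contour-shifting and the zero-density hypothesis is of size roughly $G(T)(\log T)\,y^{-1/(2G(T))}\log y$; with $\log y=A\log\log T$ the factor $y^{-1/(2G(T))}=e^{-A\log\log T/(2G(T))}$ is $1+o(1)$ when $G(T)\asymp\sqrt{\log T/\log\log T}$, so the error is $\gg\log T$, not $O(1)$. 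The truncation length must grow with $G(T)$: one needs $\log y\asymp G(T)\log\log T$ (the paper takes $Y=e^{10\,G(T)\log\log T}$) to make the error a negative power of $\log T$.

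The same too-short truncation also invalidates the stated variance. One has $V_j=\sum_{p^k\le y}|\beta_{L_j}(p^k)|^2/p^{2k\sigma}=\xi_j\log G(T)+O(G_0(T))$ only once $\log y\gg G(T)$; when $\log y=A\log\log T\ll G(T)$, partial summation against A5 gives $V_j\approx\xi_j\log\log y=\xi_j\log_3 T+O(1)$, which is far short of $\xi_j\log G(T)$ for $G(T)$ near the top of its range. And in the random case the second moment of the tail $\sum_{p^n>y}|\beta_{L_j}(p^n)|^2/p^{2n\sigma}$ is not $o(1)$ unless $\log y\gg G(T)$, so the Chebyshev truncation of $\log L_j(\sigma,\X)$ by $P_j(\sigma,\X)$ also fails. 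With $\log y$ chosen of size $G(T)\log\log T$, your argument becomes essentially the paper's. A minor point: the $\sqrt{\log G(T)}$ part of $G_0(T)$ comes from the Minkowski-inequality overhead in combining the $p$ and $p^2$ contributions to the $2k$-th moment, not from the $O(1/\log x)$ remainder in A5.
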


Our last ingredient in the proof of Theorem \ref{AsympLogComb} is the following lemma, which provides bounds for the probability of ``the concentration'' of the random variable $|F(\sigma, \X)|$.  
\begin{lem}\label{ConcentrationRandom}
Let $\varepsilon>0$ be small and $J\geq 1$ be fixed. Then for any real numbers $\sigma>1/2$, $R>0$ and $M>2\pi$ we have
$$ \pr\left(\Lv(\sigma, \X) \in [-M, M]^{2J}, \ \text{ and } \ R<|F(\sigma, \X)|<R+\varepsilon\right) \ll M^{2J-1} e^{2M} (R\varepsilon+ \varepsilon^2),
$$
where the implicit constant is absolute.
\end{lem}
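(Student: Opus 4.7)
The plan is to write the probability as an integral against the density $H(\ub,\vb)$ of $\Lv(\sigma,\X)$ on $\mathbb{R}^{2J}$, perform a change of variables that replaces $(u_1,v_1)$ by the real and imaginary parts of $F$, and estimate the resulting integral using the area of the annulus $\{R<|F|<R+\varepsilon\}$ together with an $L^\infty$-bound on $H$ established in Section \ref{sec:rand}.

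Concretely, I would start from
$$
\pr\!\left(\Lv(\sigma,\X)\in[-M,M]^{2J},\; R<|F(\sigma,\X)|<R+\varepsilon\right) = \int_{[-M,M]^{2J}}H(\ub,\vb)\,\mathbf{1}_{R<|F(\ub,\vb)|<R+\varepsilon}\,d\ub\,d\vb,
$$
where $F(\ub,\vb)=\sum_{j\le J}b_je^{u_j+iv_j}$. Fixing $(u_2,v_2,\ldots,u_J,v_J)$ and writing $F=b_1e^{u_1+iv_1}+W$ with $W=\sum_{j\ge 2}b_je^{u_j+iv_j}$ as a constant, I would substitute $(u_1,v_1)\mapsto(X,Y)=(\re F,\im F)$. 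A direct computation yields Jacobian $b_1^2e^{2u_1}$, so $du_1\,dv_1=(b_1^2e^{2u_1})^{-1}dX\,dY$; since this map is $2\pi$-periodic in $v_1$, its restriction to $v_1\in[-M,M]$ with $M>2\pi$ is at most $O(M)$-to-one over each point of its image.

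Applying $e^{-2u_1}\le e^{2M}$ (from $|u_1|\le M$), bounding $H\le\|H\|_\infty$, and observing that $\{R<|F|<R+\varepsilon\}$ is an annulus of area $\pi(2R\varepsilon+\varepsilon^2)$ in the $(X,Y)$-plane gives
$$
\int_{[-M,M]^2}H(\ub,\vb)\,\mathbf{1}_{R<|F|<R+\varepsilon}\,du_1\,dv_1\;\ll\; M\,\|H\|_\infty\,\frac{e^{2M}}{b_1^2}\,(R\varepsilon+\varepsilon^2).
$$
Integrating the remaining $2J-2$ coordinates over $[-M,M]^{2J-2}$ contributes a factor $(2M)^{2J-2}$, producing the desired estimate
$$
\pr\!\left(\Lv(\sigma,\X)\in[-M,M]^{2J},\; R<|F(\sigma,\X)|<R+\varepsilon\right)\;\ll\; M^{2J-1}e^{2M}(R\varepsilon+\varepsilon^2),
$$
with the implicit constant absorbing the dependence on $J$ and $b_1^{-2}$.

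The main obstacle is the $L^\infty$-bound $\|H\|_\infty=O(1)$ on the density. This should be established in Section \ref{sec:rand} by a Fourier-analytic argument on the random Euler products: the characteristic function of $\Lv(\sigma,\X)$ enjoys sufficient decay — obtained from the independence of the $\X(p)$'s and the Selberg orthonormality assumption A5 — so that the inverse-Fourier-transform representation of $H$ is uniformly bounded. Equivalently, one may bound directly the density of $(\re F,\im F)$ conditioned on $\Lv\in[-M,M]^{2J}$; this conditional density, being expressed via the change of variables above, is what ultimately controls the concentration of $|F(\sigma,\X)|$.
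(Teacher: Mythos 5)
Your proposal is correct and takes essentially the same route as the paper's own proof: bound the density $H_T$ of $\Lv(\sigma,\X)$ by its supremum (Lemma \ref{lemma H_T bound 1}), isolate the $(u_1,v_1)$ pair and pass to the real and imaginary parts of $F$ as new variables, use the $2\pi$-periodicity in $v_1$ to control the $\ll M$ multiplicity, bound the inverse Jacobian $e^{-2u_1}$ by $e^{2M}$, and compute the area $\ll R\varepsilon+\varepsilon^2$ of the annulus $\{R<|F|<R+\varepsilon\}$. The only cosmetic difference is that the paper additionally exploits periodicity in $v_2,\dots,v_J$ to reduce them to $[0,2\pi]$ before the polar-to-Cartesian change of variables, whereas you integrate those variables directly over $[-M,M]$; both organizations yield the same factor $M^{2J-1}$.
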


The plan of the remaining part of the paper is as follows. With all the ingredients now in place, we shall first prove Theorem \ref{AsympLogComb} in Section \ref{sec:AsympLogComb}. In Section \ref{sec:Preliminaries} we gather together several preliminary results that will be used in subsequent sections, and prove Lemma \ref{LargeDevLj}. These will be used to bound the discrepancy of the joint distribution of $\log L_j(s)$ and prove Theorem \ref{Discrepancy} in Section \ref{sec:Discrepancy}. Then in Section \ref{sec:MomentsLogF}, we shall establish Proposition \ref{MomentsLogF}. 
Finally in Section \ref{sec:rand} we shall investigate the distribution of the random vector $\Lv(\sigma, \X)$ and establish Theorem \ref{MainRandom}, Proposition \ref{MomentsLogR}, and Lemma \ref{ConcentrationRandom}.



\section{Proof of Theorem \ref{AsympLogComb}}\label{sec:AsympLogComb}

In this section we establish Theorem \ref{AsympLogComb} using the ingredients listed in the previous section, namely Theorem \ref{Discrepancy}, Propositions \ref{MomentsLogF} and \ref{MomentsLogR}, and Lemmas 
\ref{LargeDevLj} and \ref{ConcentrationRandom}. 

We let $\sigma=1/2+1/G(T)$ where $\log \log T \leq  G(T)\leq (\log T)^{\theta}$, and $0<\theta\leq 1/2$ is a real number that we shall optimize later. We start by showing how to use  Lemma  \ref{LargeDevLj} and Proposition \ref{MomentsLogF} to control the large values and the logarithmic singularities of $\log|F( \sigma+it)|$. Let $\alpha>0$ be a positive constant to be chosen and define 
$$
\L:=\alpha\log\log T, \text{ and } M:=(G(T)\log \log T)^3. 
$$ 
 We define the following sets
$$ S_1(T):= \left\{ t\in [T, 2T] : \Lv(\sigma+it) \in (-\L, \L)^{2J}\right\},$$ 
$$S_2(T):= \left\{ t\in [T, 2T] : \log|F(\sigma+it)|>-M\right\},
$$
and 
$$ S_0(T):= S_1(T) \cap S_2(T).$$
Let $\xi := \xi_{\max} :=\max_{j\leq J} \xi_j $.  Then it follows from Lemma \ref{LargeDevLj} that
\begin{equation*}\label{MeasureS1}
\begin{aligned}
\me([T, 2T]\setminus S_1(T)) & \leq \sum_{j=1}^{J} \me \left\{ t\in [T, 2T] : |\log L_j(\sigma+it)| \ge \L \right\}\\
& \ll \frac{T {e^{O(\sqrt{\log\log T})}} 
}{(\log T)^{\alpha^2/(\xi\theta)}},
\end{aligned}
\end{equation*} 
On the other hand, using Proposition \ref{MomentsLogF} with  $k=\lfloor(\log \log T)^{5/4}\rfloor$ gives
\begin{equation*}\label{MeasureST} 
\begin{aligned}
 \me([T, 2T]\setminus S_2(T)) &\leq 
\frac{1}{M^{2k}} \int_{T}^{2T} \big| \log|F( \sigma +it)| \, \big|^{2k} dt \\
 &\ll T \exp\left(-(\log \log T)^{5/4}\right).
\end{aligned}
\end{equation*}
Therefore we deduce that
\begin{equation}\label{MeasureS0T}
\me([T, 2T]\setminus S_0(T))  \ll  \frac{T {e^{O(\sqrt{\log\log T})}} 
}{(\log T)^{\alpha^2/(\xi\theta)}}.
\end{equation}
Combining this bound with Proposition \ref{MomentsLogF}, and using  H\"older's inequality with $r= \lfloor \log\log T\rfloor$ we have
\begin{equation}\label{TrunIntLog}
\begin{aligned}
& \left|\int_{t\in [T, 2T]\setminus S_0(T)}  \log | F( \sigma +it)| dt \right| \\
& \leq \big(\me\{t\in [T, 2T]\setminus S_0(T)\}\big)^{1-1/2r}\left(\int_{T}^{2T}\big| \log |F( \sigma +it)| \, \big|^{2r}dt\right)^{1/2r}\\
& \ll \left(\frac{T  e^{O(\sqrt{\log\log T})} }{(\log T)^{\alpha^2/(\xi\theta)}}\right)^{1-1/2r} \Big(T  (\log T)^{(6r+2)\theta} (\log\log T)^{4r} \Big)^{1/2r} \\
& \ll \frac{T {e^{O(\sqrt{\log\log T})}} 
}{(\log T)^{\alpha^2/(\xi\theta)-3\theta}}.
\end{aligned}
\end{equation}

We now define for $\tau\in \mathbb{R}$
$$ \Psi_T(\tau):= \frac{1}{T} \me\left\{ t\in S_0(T): \log |F ( \sigma +it)|>\tau\right\}$$
and similarly
$$ 
\Psr(\tau):= \pr\Big( \X\in \mathcal{S}, \text { and }\log |F ( \sigma, \X)|>\tau\Big),
$$
where $\mathcal{S}$ is the event $\Lv(\sigma, \X)\in (-\L, \L)^{2J}$ and $\log|F ( \sigma, \X)|>-M$.
First we observe that for $t\in S_0(T)$ we have $$\log |F( \sigma +it)| \leq \log \bigg(\sum_{j=1}^J  |b_jL_j ( \sigma +it)|\bigg)\leq \log \bigg(
\sum_{j=1}^J  |b_j| e^{\L}\bigg)=\L +\log\bigg(\sum_{j=1}^J |b_j|\bigg),$$
and hence we have 
\begin{equation}\label{EndRange}
\Psi_T(\tau)= \Psr(\tau)=0,
\end{equation}
for $$\tau > \widetilde{\L}:= \L +\log\bigg(\sum_{j=1}^J |b_j|\bigg).$$
Using a geometric covering argument, we prove the following result which shows that $\Psi_T(\tau)$ is very close to $\Psr(\tau)$ uniformly in $\tau$. This will imply Theorem \ref{AsympLogComb}. 
\begin{pro}\label{CoveringResult}
Let $T$ be large. Then we have
$$ \sup_{\tau\leq \widetilde{\L}} \left| \Psi_T(\tau)- \Psr(\tau)\right| \ll \frac{(\log\log T)^{2J+1}}{(\log T)^{(1-\theta-16 J\alpha)/(4J+2)}}.$$
\end{pro}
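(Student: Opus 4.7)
The plan is to show that for $-M < \tau \leq \widetilde{\L}$,
$$\Psi_T(\tau) - \Psr(\tau) = \D_T(A_\tau), \qquad A_\tau := \{(\u,\v) \in (-\L,\L)^{2J} : g(\u,\v) > e^\tau\},$$
where $g(\u,\v) := |\sum_{j=1}^J b_j e^{u_j + iv_j}|$; the complementary range $\tau \leq -M$ is trivial because $\Psi_T$ and $\Psr$ are both constant in $\tau$ there. Since $A_\tau$ is not a rectangular box, Theorem \ref{Discrepancy} cannot be applied to it directly. Instead I will partition $[-\L,\L]^{2J}$ into disjoint axis-parallel cubes $R_1,\dots,R_N$ of common side $\delta$ (so $N \asymp (\L/\delta)^{2J}$) and classify each cube as \emph{interior} ($R_i \subset A_\tau$), \emph{exterior} ($R_i \cap A_\tau = \emptyset$), or \emph{boundary} otherwise. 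Writing $B$ for the union of the boundary cubes, the decomposition $A_\tau = \bigsqcup_{\text{interior}} R_i \sqcup (A_\tau \cap B)$ gives
$$|\D_T(A_\tau)| \leq \sum_{\text{interior}} |\D_T(R_i)| + \Phi_T(B) + \Phrand(B) \leq N \sup_R |\D_T(R)| + \Phi_T(B) + \Phrand(B),$$
and the first term is $\ll (\L/\delta)^{2J}\sqrt{G(T)}\log\log T/\sqrt{\log T}$ by Theorem \ref{Discrepancy}.

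The crux is to control $\Phrand(B)$ and $\Phi_T(B)$. Since $\sum_j b_j^2 = 1$, the function $h(\u,\v) = \sum_j b_j e^{u_j + iv_j}$ satisfies $|\nabla h|^2 = 2 \sum_j b_j^2 e^{2u_j} \leq 2 e^{2\L}$ on $[-\L,\L]^{2J}$, so $g = |h|$ is $\sqrt{2}\,e^\L$-Lipschitz there. Every boundary cube therefore lies in the shell
$$\mathcal{U}_\tau := \big\{(\u,\v) \in [-\L,\L]^{2J} : |g(\u,\v) - e^\tau| \leq C_J \delta e^\L\big\}$$
for an explicit constant $C_J$. Lemma \ref{ConcentrationRandom} applied with $M = \L$, $R = \max(e^\tau - C_J\delta e^\L, 0)$ and $\varepsilon = 2C_J\delta e^\L$ (with a trivial modification when the left endpoint falls below zero) yields
$$\Phrand(\mathcal{U}_\tau) \ll \L^{2J-1} e^{2\L}\big(e^\tau \delta e^\L + \delta^2 e^{2\L}\big) \ll (\log\log T)^{2J-1}\,\delta\,(\log T)^{4\alpha},$$
using $\tau \leq \widetilde{\L} = \L + O(1)$, $e^\L = (\log T)^\alpha$, and $\delta \leq 1$. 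Applying Theorem \ref{Discrepancy} one cube at a time gives $\Phi_T(B) \leq \Phrand(B) + N \sup_R |\D_T(R)|$, so the boundary contribution is absorbed into the interior bound plus $2 \Phrand(\mathcal{U}_\tau)$.

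Collecting the estimates and using $G(T) \leq (\log T)^\theta$ yields
$$|\Psi_T(\tau) - \Psr(\tau)| \ll \frac{(\log\log T)^{2J+1}}{\delta^{2J}}(\log T)^{(\theta-1)/2} + (\log\log T)^{2J-1}\,\delta\,(\log T)^{4\alpha},$$
and the choice $\delta = (\log T)^{(\theta - 1 - 8\alpha)/(4J+2)}$ balances the two terms, producing exactly the exponent $-(1 - \theta - 16J\alpha)/(4J+2)$ in the claimed bound. All constants are independent of $\tau$, so the stated supremum is controlled. The main obstacle is that the Lipschitz blow-up $e^\L = (\log T)^\alpha$ feeds into the shell width and hence into the final exponent through the $16J\alpha$ term; this is exactly what forces $\alpha$ to be taken small and, in the subsequent proof of Theorem \ref{AsympLogComb}, contributes to the constraint $1 - \theta - 16 J\alpha > 0$, which translates into the $1/J$-size range of the admissible exponent $\nu$ in the main theorem.
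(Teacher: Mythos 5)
Your proof is correct and uses the same core ingredients as the paper's — a decomposition into axis-parallel hypercubes, Theorem \ref{Discrepancy} applied cube by cube, and Lemma \ref{ConcentrationRandom} to show the random measure of the ``shell'' near the level set $\{g=e^\tau\}$ is small, with the final balance of $\delta$ producing the same exponent $(1-\theta-16J\alpha)/(4J+2)$ as the paper's choice of covering scale $\varepsilon$. The organization differs in presentation rather than substance: the paper splits into three cases according to the size of $e^\tau$ (trivial $\tau\leq -M$; small $e^\tau\leq C_3\varepsilon e^{\L}$, where it covers the small complementary set $\mathcal U_J(e^\tau,\L)$; and large $e^\tau$, where it covers $\mathcal V_J(e^\tau,\L)$), whereas you avoid the case split entirely by partitioning $[-\L,\L]^{2J}$ into interior/exterior/boundary cubes relative to $A_\tau$ and using the Lipschitz constant of $g$ to place all boundary cubes in the shell $\mathcal U_\tau$. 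Your version is arguably cleaner and more symmetric, and the explicit Lipschitz computation $|\nabla h|^2=2\sum_j b_j^2 e^{2u_j}\leq 2e^{2\L}$ makes the shell-width $O(\delta e^{\L})$ transparent; the paper achieves the same inclusion via a direct pointwise estimate on $|\sum_j b_j e^{x_j+iy_j}-\sum_j b_j e^{u_j+iv_j}|$ for points in the same cube. One small thing worth stating explicitly: for $\tau\leq -M$ the identity $\Psi_T(\tau)-\Psr(\tau)=\D_T(A_{-M})$ is what the main argument must handle as a limiting/boundary case, so ``trivial'' should be read as ``subsumed by the uniform-in-$\tau$ bound,'' which is indeed what your argument delivers.
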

\begin{proof} 
We let $0<\varepsilon= \varepsilon(T)\leq e^{-2\L}$ be a small parameter to be chosen. 
We shall consider three cases depending on the size of $\tau$. 

\smallskip

\noindent \textbf{Case 1}: $\tau\leq  -M.$ In this case, it follows from the definition of the set $S_0(T)$ together with \eqref{MeasureS0T} that
$$
  \Psi_T(\tau)= \frac{\me(S_0(T))}{T}=\frac{1}{T} \me\left\{ t\in [T, 2T] : \Lv(\sigma+it) \in (-\L, \L)^{2J}\right\} +O\left(\frac{1}{(\log T)^2}\right).
 $$ 
Similarly, using Proposition \ref{MomentsLogR} with $k=\lfloor \log \log T\rfloor$ we have
\begin{align*}
\pr\left(\log\big|F( \sigma, \X)\big|<-M\right) &\leq 
\frac{1}{M^{2k}} \ex\left( \big| \log|F( \sigma, \X)| \, \big|^{2k} \right) \ll \frac{1}{(\log T)^{2}}.
\end{align*}
Therefore, it follows from the definition of the event $\mathcal{S}$ that
 $$ \Psr(\tau)=  \pr\big(\Lv(\sigma, \X) \in (-\L, \L)^{2J}\big) +O\left(\frac{1}{(\log T)^2}\right). $$
 Hence Theorem \ref{Discrepancy} yields 
 \begin{equation}\label{DiscrepancyCase1}
 \Psi_T(\tau)= \Psr(\tau) + O\left(\frac{\log\log T}{(\log T)^{(1-\theta)/2}}\right).
 \end{equation}
 
 \smallskip
 
\noindent \textbf{Case 2}: $-M< \tau\leq \log(C_3\varepsilon)+ \L$, where $C_3$ is a suitably large constant. 
In this case we have 
\begin{equation}\label{SecondCase}
  \Psi_T(\tau)= \frac{1}{T} \me\big\{ t\in [T, 2T]: \Lv(\sigma+it) \in (-\L, \L)^{2J}\setminus \mathcal{U}_J(e^{\tau}, \L)\big\},
\end{equation}
 where $\mathcal{U}_J(y, \L)$ is the bounded subset of $\mathbb{R}^{2J}$ defined by
\begin{align*}
\mathcal{U}_J(y, \L)
:= \Big\{ (u_1, \dots, u_J, v_1, \dots, v_J) \in \mathbb{R}^{2J} : \  & |u_j|, |v_j| <\L \text{ for all } 1\le j\le J,\\
& \text{ and } \left|\sum_{j=1}^J b_j e^{u_j+iv_j}\right|\leq y \Big\}.
\end{align*}
 We cover $\mathcal{U}_J(e^{\tau}, \L)$ with $K(\tau)$ distinct
  hypercubes $\mathcal{B}_k(\tau)$  of the form $\prod_{j=1}^{2J} [z_j, z_j + \varepsilon )$  
 with non-empty intersection with $\mathcal{U}_J(e^{\tau}, \L)$. Note that 
$$ K(\tau)  \ll \left(\frac{\L}{\varepsilon}\right)^{2J}.
$$
Now, let $1\leq k\leq K(\tau)$ and $(u_1, \dots, u_J, v_1, \dots, v_J) \in \mathcal{B}_k(\tau) \cap \mathcal{U}_J(e^{\tau}, \L)$. Recall that this intersection is non-empty by construction. Then, for any $(x_1, \dots, x_J, y_1, \dots, y_J)\in \mathcal{B}_k(\tau)$ we have $|x_j-u_j|\leq \varepsilon$ and 
$|y_j-v_j|\leq \varepsilon$ for all $1\leq j\leq J$. Hence, we deduce that $|x_j|, |y_j|<\L+\varepsilon$ for all $1\leq j\leq J$ and 
$$ \left|\sum_{j=1}^J b_j e^{x_j+iy_j}\right|=\left|\sum_{j=1}^J b_j e^{u_j+iv_j}\right| +O(\varepsilon e^{\L}) \leq C_4\varepsilon e^{\L}$$
for some positive constant $C_4$ since $e^{\tau} \leq C_3\varepsilon e^{\L}$ by our assumption. Therefore, we have shown that
$$ \mathcal{U}_J(e^{\tau}, \L) \subset  \bigcup_{k \le K(\tau)} \mathcal{B}_k(\tau)   \subset \mathcal{U}_J(C_4\varepsilon e^{\L}, \L+\varepsilon).$$
Hence, appealing to Theorem \ref{Discrepancy} we obtain
\begin{equation}\label{BoundMesExceptional}
\begin{aligned}
& \frac{1}{T} \me\big\{ t\in [T, 2T]: \Lv(\sigma+it) \in \mathcal{U}_J(e^{\tau}, \L)\big\}\\
& \leq 
\sum_{k=1}^{K(\tau)} \frac{1}{T} \me\big\{ t\in [T, 2T]: \Lv(\sigma+it) \in \mathcal{B}_k(\tau)\big\}\\
&= \sum_{k=1}^{K(\tau)}\pr\left(\Lv(\sigma, \X)\in \mathcal{B}_k(\tau)\right) + O\left(\frac{K(\tau)\log\log T}{(\log T)^{(1-\theta)/2}}\right)\\
&\leq \pr\left(\Lv(\sigma, \X)\in \mathcal{U}_J(C_4\varepsilon e^{\L}, \L+\varepsilon)\right) + O\left(\frac{\L^{2J+1} }{\varepsilon^{2J}(\log T)^{(1-\theta)/2}}\right).
\end{aligned}
\end{equation}
Moreover, it follows from Lemma \ref{ConcentrationRandom} that
\begin{equation}\label{BoundPrExceptional}
\pr\left(\Lv(\sigma, \X)\in \mathcal{U}_J(C_4\varepsilon e^{\L}, \L+\varepsilon)\right) \ll \L^{2J} e^{4\L}\varepsilon^2.
\end{equation}
Combining this bound with \eqref{SecondCase} and \eqref{BoundMesExceptional} gives
$$ \Psi_T(\tau)= \frac{1}{T} \me\big\{ t\in [T, 2T]: \Lv(\sigma+it) \in (-\L, \L)^{2J}\big\} + O\left(\L^{2J} e^{4\L}\varepsilon^2+\frac{\L^{2J+1} }{\varepsilon^{2J}(\log T)^{(1-\theta)/2}}\right).$$
Similarly, it follows from \eqref{BoundPrExceptional} that
$$ \Psr(\tau)= \pr\left(\Lv(\sigma, \X) \in (-\L, \L)^{2J}\right) +O(\L^{2J} e^{4\L}\varepsilon^2).$$
Thus, using Theorem \ref{Discrepancy} we deduce that in this case
\begin{equation}\label{DiscrepancyCase2}
\Psi_T(\tau)=  \Psr(\tau) + O\left(\L^{2J} e^{4\L}\varepsilon^2+\frac{\L^{2J+1} }{\varepsilon^{2J}(\log T)^{(1-\theta)/2}} \right).
\end{equation}

\smallskip

\noindent \textbf{Case 3}: $\log(C_3\varepsilon)+ \L<\tau\le \widetilde{\L}.$ 
In this case we have 
 $$  \Psi_T(\tau)= \frac{1}{T} \me\big\{ t\in [T, 2T]: \Lv(\sigma+it) \in  \mathcal{V}_J(e^{\tau}, \L)\big\},$$
 where $\mathcal{V}_J(y, \L)$ is the bounded subset of $\mathbb{R}^{2J}$ defined by
\begin{align*}
\mathcal{V}_J(y, \L)
:= \Big\{ (u_1, \dots, u_J, v_1, \dots, v_J) \in \mathbb{R}^{2J} : \  & |u_j|, |v_j| <\L \text{ for all } 1\le j\le J,\\
& \text{ and } \left|\sum_{j=1}^J b_j e^{u_j+iv_j}\right| > y \Big\}.
\end{align*}
Similarly as before, we cover $\mathcal{V}_J(e^{\tau}, \L)$ with $\widetilde{K}(\tau)$ distinct hypercubes $\widetilde{\mathcal{B}}_k(\tau)$, each of which has nonempty intersection with $\mathcal{V}_J(e^{\tau}, \L)$ and sides of length $\varepsilon$.
The number of such hypercubes is 
$$ \widetilde{K}(\tau)  \ll \left(\frac{\L}{\varepsilon}\right)^{2J}.
$$
Now, let $1\leq k\leq \widetilde{K}(\tau)$ and $(u_1, \dots, u_J, v_1, \dots, v_J) \in \widetilde{\mathcal{B}}_k(\tau) \cap \mathcal{V}_J(e^{\tau}, \L)$. Then, for any $(x_1, \dots, x_J, y_1, \dots, y_J)\in \widetilde{\mathcal{B}}_k(\tau)$ we have $|x_j-u_j|\leq \varepsilon$ and 
$|y_j-v_j|\leq \varepsilon$ for all $1\leq j\leq J$. Hence, we deduce that $|x_j|, |y_j|<\L+\varepsilon$ for all $1\leq j\leq J$ and 
$$ \left| \sum_{j=1}^J b_j e^{x_j+iy_j} \right| = \left| \sum_{j=1}^J b_j e^{u_j+iv_j} \right| +O(\varepsilon e^{\L})> e^{\tau}- \frac{C_3}{2}\varepsilon e^{\L}$$
if $C_3$ is suitably large. Therefore, we have shown that
\begin{equation}\label{UpperLowerBoxes2}
\mathcal{V}_J(e^{\tau}, \L) \subset  \bigcup_{k \le \widetilde{K}(\tau)} \widetilde{\mathcal{B}}_k(\tau)  \subset \mathcal{V}_J\left(e^{\tau}-\frac{C_3}{2}\varepsilon e^{\L}, \L+\varepsilon\right).
\end{equation}
Now, using Theorem \ref{Discrepancy} we deduce
\begin{equation}\label{UpperBThirdCase}
\begin{aligned}
\Psi_T(\tau)& \leq 
\sum_{k=1}^{\widetilde{K}(\tau)} \frac{1}{T} \me\big\{ t\in [T, 2T]: \Lv(\sigma+it) \in \widetilde{\mathcal{B}}_k(\tau)\big\}\\
&= \sum_{k=1}^{\widetilde{K}(\tau)}\pr\left(\Lv(\sigma, \X)\in \widetilde{\mathcal{B}}_k(\tau)\right) + O\left(\frac{\widetilde{K}(\tau)\log\log T}{(\log T)^{(1-\theta)/2}}\right)\\
&\leq \pr\left(\Lv(\sigma, \X)\in \mathcal{V}_J\left(e^{\tau}-\frac{C_4}{2}\varepsilon e^{\L}, \L+\varepsilon\right)\right) + O\left(\frac{\L^{2J+1} }{\varepsilon^{2J}(\log T)^{(1-\theta)/2}}\right).
\end{aligned}
\end{equation}
Moreover, it follows from Lemma \ref{ConcentrationRandom} that 
$$  \pr\left( \Lv(\sigma, \X) \in (-\L-\varepsilon, \L+\varepsilon)^{2J} \text{ and } e^{\tau}-\frac{C_3}{2}\varepsilon e^{\L}< \left| \sum_{j=1}^J   b_j L_j(\sigma, \X)  \right| \leq e^{\tau}\right) \ll  \L^{2J} e^{4\L} \varepsilon,
$$
since $\tau\leq \widetilde{\L}=\L+O(1)$ by our assumption. Furthermore, since  the density $H_{T}(\ub, \vb)$ of the random vector $\Lv(\sigma, \X)$ is uniformly bounded in $\ub, \vb$ by Lemma \ref{lemma H_T bound 1}, we have 
\begin{equation}\label{RandomProbSCube}
\pr\left( \Lv(\sigma, \X) \in (-\L-\varepsilon, \L+\varepsilon)^{2J}\setminus (-\L, \L)^{2J}\right)\ll \L^{2J-1} \varepsilon.
\end{equation}
Combining these bounds we obtain
$$ 
\pr\left(\Lv(\sigma, \X)\in \mathcal{V}_J\left(e^{\tau}-\frac{C_4}{2}\varepsilon e^{\L}, \L+\varepsilon\right)\right)=\Psr(\tau)+ O\left(\L^{2J} e^{4\L} \varepsilon\right).
$$
Hence, inserting this estimate in \eqref{UpperBThirdCase} we deduce 
\begin{equation}\label{UB3}\Psi_T(\tau) \leq \Psr(\tau)+ O\left(\L^{2J} e^{4\L} \varepsilon + \frac{\L^{2J+1} }{\varepsilon^{2J}(\log T)^{(1-\theta)/2}}\right).
\end{equation}

We now proceed to prove the corresponding lower bound. Let $\tau_1$ be such that $e^{\tau}=e^{\tau_1}-\frac{C_3}{2}\varepsilon e^{\L}$. Then, it follows from \eqref{UpperLowerBoxes2} and Theorem \ref{Discrepancy} that
\begin{equation}\label{LowerBThirdCase}
\begin{aligned}
& \frac{1}{T} \me\big\{ t\in [T, 2T]: \Lv(\sigma+it) \in \mathcal{V}_J(e^{\tau}, \L+\varepsilon)\big\} \\
& \geq 
\sum_{k=1}^{\widetilde{K}(\tau_1)} \frac{1}{T} \me\big\{ t\in [T, 2T]: \Lv(\sigma+it) \in \widetilde{\mathcal{B}}_k(\tau_1)\big\}\\
&= \sum_{k=1}^{\widetilde{K}(\tau_1)}\pr\left(\Lv(\sigma, \X)\in \widetilde{\mathcal{B}}_k(\tau_1)\right) + O\left(\frac{\widetilde{K}(\tau)\log\log T}{(\log T)^{(1-\theta)/2}}\right)\\
&\geq \Psr(\tau_1) + O\left(\frac{\L^{2J+1} }{\varepsilon^{2J}(\log T)^{(1-\theta)/2}}\right).\\
\end{aligned}
\end{equation}
Moreover, by Lemma \ref{ConcentrationRandom} we have 
\begin{align*}
\Psr(\tau_1)&= \Psr(\tau) + O\Big(\pr\big(\Lv(\sigma, \X)\in (-\L, \L)^{2J} : e^{\tau}<  \left| \sum_{j=1}^J b_j L_j(\sigma, \X) \right| \leq e^{\tau}+ \frac{C_3}{2}\varepsilon e^{\L}  \big)\Big)\\
&= \Psr(\tau)+O\left(\L^{2J} e^{4\L} \varepsilon\right).
\end{align*}
Finally, we use Theorem \ref{Discrepancy} together with \eqref{RandomProbSCube} to deduce
\begin{align*} 
 \frac{1}{T} & \me\big\{ t\in [T, 2T]: \Lv(\sigma+it) \in \mathcal{V}_J(e^{\tau}, \L+\varepsilon)\big\}- \Psi_T(\tau) \\
& \leq  \frac{1}{T} \me\big\{ t\in [T, 2T]: \Lv(\sigma+it) \in(-\L-\varepsilon, \L+\varepsilon)^{2J}\setminus (-\L, \L)^{2J}\big\} \\
& =  \pr\left( \Lv(\sigma, \X) \in (-\L-\varepsilon, \L+\varepsilon)^{2J}\setminus (-\L, \L)^{2J}\right) + O\left(\frac{\log\log T}{(\log T)^{(1-\theta)/2}}\right)\\
&\ll \L^{2J-1} \varepsilon + \frac{\log\log T}{(\log T)^{(1-\theta)/2}}.
\end{align*}
Inserting these estimates in \eqref{LowerBThirdCase} yields
\begin{equation}\label{LB3}
\Psi_T(\tau) \geq \Psr(\tau)+  O\left(\L^{2J} e^{4\L} \varepsilon +\frac{\L^{2J+1} }{\varepsilon^{2J}(\log T)^{(1-\theta)/2}}\right).
\end{equation}

Thus we deduce from the estimates \eqref{DiscrepancyCase1}, \eqref{DiscrepancyCase2}  and  \eqref{UB3} and \eqref{LB3} that in all cases we have
$$ \Psi_T(\tau) = \Psr(\tau)+  O\left(\L^{2J} e^{4\L} \varepsilon +\frac{\L^{2J+1} }{\varepsilon^{2J}(\log T)^{(1-\theta)/2}}\right).$$
The desired result follows by choosing 
$$
\varepsilon= \frac{(\log\log T)^{1/(2J+1)}}{(\log T)^{(1-\theta+8\alpha)/(4J+2)}}.
$$

\end{proof}

\begin{proof}[Proof of Theorem \ref{AsympLogComb}]
By \eqref{EndRange} we have
$$
 \int_{-M}^{\widetilde{\L}}\Psi_T(\tau)d\tau =  \int_{-M}^{\widetilde{\L}} \frac{1}{T} \int_{\substack{ t\in S_0(T)\\ \log |F(\sigma+it)|>\tau}} dt d\tau= \frac{1}{T} \int_{t\in S_0(T)} \left(\log\big|F(\sigma+it)\big|+M\right)dt.$$
 Combining this identity with \eqref{TrunIntLog} and using that $\me(S_0(T))=T\Psi_T(-M)$ we obtain
\begin{equation}\label{MainApprox}
\frac{1}{T}\int_T^{2T} \log\big|F(\sigma+it)\big|dt =  \int_{-M}^{\widetilde{\L}}\Psi_T(\tau)d\tau -M \Psi_T(-M) + O\left(\frac{ {e^{O(\sqrt{\log\log T})}}
}{(\log T)^{\alpha^2/(\xi\theta)-3\theta}}\right).
\end{equation}

We now repeat the exact same approach for the random model $F (\sigma, \X)$. 
Using the same argument leading to \eqref{TrunIntLog} but with Lemma \ref{MomentsLogR} instead of Lemma \ref{MomentsLogF}, we deduce similarly that 
$$\ex\left(\log \big|F(\sigma, \X)\big|\right)= \ex\left(\mathbf{1}_{\mathcal{S}} \cdot \log \big|F(\sigma, \X)\big|\right)+ O\left(\frac{ {e^{O(\sqrt{\log\log T})}}
 }{(\log T)^{\alpha^2/(\xi\theta)}}\right),$$
where $\mathbf{1}_{\mathcal{S}}$ is the indicator function of $\mathcal{S}$. 
Therefore, reproducing the argument leading to \eqref{MainApprox} we obtain
\begin{equation}\label{MainApprox2}
\ex\left(\log \big|F(\sigma, \X)\big|\right)= \int_{-M}^{\widetilde{\L}}\Psr(\tau)d\tau -M \Psr(-M) + O\left(\frac{ {e^{O(\sqrt{\log\log T})}} 
}{(\log T)^{\alpha^2/(\xi\theta)}}\right).
\end{equation}

Combining \eqref{MainApprox} and \eqref{MainApprox2} together with Proposition \ref{CoveringResult} we deduce that 
\begin{multline}\label{LogIntegralLast}
\frac{1}{T}\int_T^{2T} \log\big|F(\sigma+it)\big|dt 
 = \ex\left(\log \big|F(\sigma, \X)\big|\right) \\
 + O\left(
\frac{(\log\log T)^{2J+4}}{(\log T)^{(1-\theta-16 J\alpha)/(4J+2)-3\theta}} + \frac{ {e^{O(\sqrt{\log\log T})}}
 }{(\log T)^{\alpha^2/(\xi\theta)-3\theta}} \right).
\end{multline}
We first require that $\theta$ satisfies $\theta<\alpha/\sqrt{3\xi} $, so that the exponent of $\log T$ in the second error term of \eqref{LogIntegralLast} is negative. Therefore, we might choose $\alpha$ be slightly bigger than $\sqrt{3\xi} \theta$. Hence, in order to insure that the exponent of $\log T$ in the error term of \eqref{LogIntegralLast} is negative, we thus require that $\theta$ satisfies the inequality
$$
12J \theta + 7\theta +16 J \sqrt{3\xi} \theta <1 \Longleftrightarrow \theta < \frac{1}{12 J+ 7 + 16 J \sqrt{3\xi}}.
$$
This completes the proof.

\end{proof}


\section{Preliminary results}\label{sec:Preliminaries}
 In this section we provide several of the technical lemmas that we shall need later. We first record several useful facts. Since
$$ \log L_j (s) = \sum_p \sum_{i=1}^d     \sum_{k=1}^\infty   \frac{ \alpha_{j,i}(p)^k }{ k p^{ks}},$$
we see that
$$ \beta_{L_j }(p^k) = \frac{1}{k} \sum_{i=1}^d \alpha_{j,i}(p)^k   $$
and 
\begin{equation}\label{beta bound 0}
  |\beta_{L_j}(p^k)| \leq \frac{d}{k}   p^{k\theta} 
  \end{equation}
for $ k = 1, 2 , \ldots $ and $ j = 1 , \ldots , J$.  
 For later use, we remark that
\be\label{prime sum bounds} 
  \sum_{ k=3}^\infty \sum_p   \frac{  | \beta_{L_j }( p^k) |}{p^{k/2}}  < \infty, \quad 
  \sum_{ k=2}^\infty \sum_p   \frac{  | \beta_{L_j }( p^k) |^2 }{p^{k}}  < \infty, \quad 
   \sum_p   \frac{  | \beta_{L_j }( p ) |^4 }{p^2}  < \infty .
\ee
One can easily show \eqref{prime sum bounds} by applying assumption A3 and inequalities
\be\label{beta bound 1} 
   | \beta_{L_j } (p^k) | \leq    \frac1k  \sum_{i=1}^d  | \alpha_{j,i}(p)|^k \leq \frac{p^{(k-2)\theta}}{k} \sum_{i=1}^d  |\alpha_{j,i} (p)|^2  \quad \mathrm{for~} k \geq 2 
 \ee 
 and  
\be\label{beta bound 2}
| \beta_{L_j } (p ) |^2  \leq    \bigg(  \sum_{i=1}^d  | \alpha_{j,i}(p)| \bigg)^2    \leq d  \sum_{i=1}^d  |\alpha_{j,i} (p)|^2  .     
 \ee
 \begin{lem}\label{lem:SecondMomentTail}
 Let $1\leq j , k  \leq J$. 
Then uniformly for $1/2<\sigma\le 1$ we have
\begin{equation}\label{SOC2}
 \sum_p  \frac{ \beta_{L_{j}}(p)  \overline{\beta_{L_{k}}(p) }  }{p^{2 \sigma}}= \delta_{j,k } \xi_j \log\left(\frac{1}{\sigma-1/2}\right) + c'_{j,k} + O\left((\sigma-1/2)\log\left(\frac{1}{\sigma-1/2}\right)\right)
\end{equation}
for some constants $c'_{j,k}$. 
Moreover, uniformly for $1/2<\sigma\le1$ we have
\begin{equation}\label{SOCTail}
 \sum_{p>Y}  \frac{ \beta_{L_{j}}(p)  \overline{\beta_{L_{k}}(p) } }{p^{2 \sigma}}= \delta_{j,k } \xi_j  \log\left(\frac{1}{(\sigma-1/2)\log Y}\right) +O(1)  \mathrm{~~if~} 2\leq Y\leq \exp \Big(\frac{1}{2\sigma-1}\Big), 
\end{equation}
 and 
\begin{equation}\label{SecondMomentTail}
\sum_{p^n>Y} \frac{|\beta_{L_{j}}(p^n)|^2}{p^{2n\sigma}}\ll \frac{Y^{1-2\sigma }}{ (2 \sigma-1)\log Y }   \mathrm{~~if~} Y\geq  \exp \Big(\frac{1}{2 \sigma-1 }\Big) .
\end{equation}
 \end{lem}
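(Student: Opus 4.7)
All three estimates follow from assumption A5 via Abel summation, with the main work lying in carefully extracting the $\sigma$- and $Y$-dependence of the main and error terms. Throughout, write $a_p := \beta_{L_j}(p)\overline{\beta_{L_k}(p)}/p$ and $A(x) := \sum_{p \leq x} a_p$, so that A5 yields
\[
A(x) = \delta_{j,k}\, \xi_j\, \log\log x + c_{j,k} + O(1/\log x)
\]
for $x \geq 3$.

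For \eqref{SOC2}, I would start from the Abel summation identity
\[
\sum_p \frac{\beta_{L_j}(p)\overline{\beta_{L_k}(p)}}{p^{2\sigma}} = (2\sigma - 1)\int_2^\infty A(t)\, t^{-2\sigma}\, dt,
\]
which holds for $\sigma > 1/2$ since the boundary term $A(X)X^{-(2\sigma-1)}$ vanishes as $X \to \infty$. Substituting the formula for $A(t)$ and applying the change of variables $u = (2\sigma-1)\log t$ converts the $\log\log t$ contribution into $\xi_j \int_{(2\sigma-1)\log 2}^\infty (\log u - \log(2\sigma-1))\, e^{-u}\, du$, whose leading behavior as $\sigma \to 1/2^+$ is $\xi_j \log(1/(\sigma-1/2)) + O(1)$. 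The constant $c_{j,k}$ contributes a bounded quantity, and the $O(1/\log t)$ remainder becomes
\[
O\Bigl((2\sigma-1)\int_{(2\sigma-1)\log 2}^\infty \frac{e^{-u}}{u}\, du\Bigr) = O\bigl((\sigma-1/2)\log(1/(\sigma-1/2))\bigr),
\]
which yields \eqref{SOC2} after absorbing the remaining constants into $c'_{j,k}$.

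For \eqref{SOCTail}, I would apply the same Abel summation on the tail $\sum_{p > Y}$, giving
\[
\sum_{p > Y} \frac{\beta_{L_j}(p)\overline{\beta_{L_k}(p)}}{p^{2\sigma}} = -A(Y) Y^{-(2\sigma-1)} + (2\sigma-1)\int_Y^\infty A(t)\, t^{-2\sigma}\, dt.
\]
Under the hypothesis $Y \leq \exp(1/(2\sigma-1))$ one has $Y^{-(2\sigma-1)} = 1 + O(1)$, so the boundary term equals $-\delta_{j,k}\xi_j \log\log Y + O(1)$. The substitution $u = (2\sigma-1)\log t$ now has lower limit $u_0 = (2\sigma-1)\log Y \in [0, 1]$, and the $\log\log t$ integral contributes $\xi_j \log(1/(2\sigma-1)) + O(1)$. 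Combining produces the stated main term $\delta_{j,k}\xi_j \log(1/((\sigma-1/2)\log Y))$.

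For \eqref{SecondMomentTail}, where now $(2\sigma-1)\log Y \geq 1$, I would separate the $n=1$ and $n \geq 2$ contributions. For $n = 1$, Abel summation applied to the $j=k$ case of A5 combined with integration by parts on $\int_Y^\infty \log\log t \cdot t^{-2\sigma}\, dt$ gives cancellation of the leading $\log\log Y$ terms against the boundary contribution, leaving
\[
\sum_{p > Y} \frac{|\beta_{L_j}(p)|^2}{p^{2\sigma}} \ll \int_Y^\infty \frac{t^{-2\sigma}}{\log t}\, dt \ll \frac{Y^{1-2\sigma}}{(2\sigma-1)\log Y}.
\]
For $n \geq 2$, I would use the pointwise bound $|\beta_{L_j}(p^n)| \leq (d/n)\, p^{n\theta}$ coming from A1 (with $\theta < 1/2$) together with the constraint $p \geq Y^{1/n}$, reducing to the Mertens-type estimate $\sum_{p > X} p^{-s} \ll X^{1-s}/((s-1)\log X)$ for $s > 1$, and showing that the total $n \geq 2$ contribution is of smaller order than the $n = 1$ term; the absolute convergence in \eqref{prime sum bounds} provides a crude backup when needed.

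\textbf{Main obstacle.} The main technical point is the careful bookkeeping in \eqref{SOC2}, specifically ensuring that the $O(1/\log t)$ remainder from A5 contributes only $O((\sigma-1/2)\log(1/(\sigma-1/2)))$ via the change of variables $u = (2\sigma-1)\log t$. For \eqref{SecondMomentTail}, verifying that the prime-power contributions with $n \geq 2$ are indeed dominated by $Y^{1-2\sigma}/((2\sigma-1)\log Y)$ requires the refined pointwise bound from A1 rather than the absolute convergence in \eqref{prime sum bounds} alone.
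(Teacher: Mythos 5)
Your approach for \eqref{SOC2} and \eqref{SOCTail} is essentially the same as the paper's: both convert the prime sum via partial summation into an integral of $\log\log t$ against $t^{-2\sigma}\,dt$, use the substitution $w=(2\sigma-1)\log t$, and read off the logarithm. (Your intermediate claims for \eqref{SOCTail} that the boundary term is $-\delta_{j,k}\xi_j\log\log Y + O(1)$ and the integral contributes $\xi_j\log(1/(2\sigma-1)) + O(1)$ are each inaccurate --- the true boundary is $-\delta_{j,k}\xi_j\,Y^{-(2\sigma-1)}\log\log Y + O(1)$, and the integral retains a factor $Y^{-(2\sigma-1)}$ as well --- but the two errors cancel exactly, so the combined main term you state is correct. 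You should make this cancellation explicit, since $Y^{-(2\sigma-1)}$ is only guaranteed to lie in $[e^{-1},1)$, not to be $1+o(1)$.)

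There is, however, a genuine gap in your treatment of the prime-power terms in \eqref{SecondMomentTail}. The pointwise bound $|\beta_{L_j}(p^n)|\leq (d/n)\,p^{n\theta}$ from A1, applied alone, leads you to sums of the form $\sum_{p>Y^{1/n}}p^{-2n(\sigma-\theta)}$, and your planned Mertens estimate requires $2n(\sigma-\theta)>1$. Since all one knows is $\sigma>1/2$ and $\theta<1/2$, one only has $\sigma-\theta>\tfrac12-\theta$, so the condition fails for all $n\leq 1/(1-2\theta)$ --- which can be arbitrarily many values of $n$ when $\theta$ is close to $1/2$ --- and for those $n$ the $p$-sum diverges. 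The fallback you mention, the absolute convergence in \eqref{prime sum bounds}, only gives an $O(1)$ bound, which is not enough: for $Y$ large the target $Y^{1-2\sigma}/((2\sigma-1)\log Y)$ tends to $0$. The paper avoids this by combining \eqref{beta bound 0} with \eqref{beta bound 1} to get $|\beta_{L_j}(p^n)|^2 \leq \frac{d\,p^{2(n-1)\theta}}{n^2}\sum_{i}|\alpha_{j,i}(p)|^2$, and then using assumption A3 (Ramanujan on average, $\sum_{p\leq x}\sum_i|\alpha_{j,i}(p)|^2=O(x^{1+\epsilon})$) together with partial summation to make the resulting Dirichlet-type sums converge with the required power saving. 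Any correct proof of \eqref{SecondMomentTail} must invoke A3 (or an equivalent averaged bound) here; A1 alone does not suffice.
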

 
 \begin{proof} We start by proving \eqref{SOC2}. First, by partial summation and \eqref{SOC} we derive

 \be\label{SOC3} \begin{split}
 \sum_p       \frac{ \beta_{L_{j}}(p)  \overline{   \beta_{L_{k}}(p) }  }{p^{2 \sigma}}     =  &     \delta_{j,k} \xi_{j}   \int_2^\infty  \frac{ u^{- 2 \sigma}}{ \log u} du + b_{j,k} \\
 & + O\bigg((\sigma- \frac12)\left(1 + \int_2^\infty  \frac{ u^{-2 \sigma}}{ \log u } du \right)\bigg),   
\end{split}\ee
for some constants $b_{j,k}.$
 To evaluate the integral on the right hand of this estimate we use the substitution $ w= (2\sigma-1) \log u $. This gives
\be \label{IntSOC} 
\begin{split}
 \int_2^\infty \frac{ u^{-2\sigma}}{\log u } du  & = \int_{(2 \sigma-1)\log 2}^\infty  e^{-w} \frac{dw}{w}  \\
 & =  \int_{(2 \sigma-1)\log 2}^1 \frac{dw}{w} +    \int_{ (2 \sigma-1)\log 2   }^1  ( e^{-w}-1) \frac{dw}{w}    + \int_1^\infty e^{-w} \frac{dw}{w}   \\
 & =  \log\left(\frac{1}{\sigma -1/2}\right)-\log(2\log 2)  -\gamma + O(\sigma-1/2), 
\end{split}\ee
where $$\gamma= \int_{0}^1  ( 1-e^{-w}) \frac{dw}{w}    - \int_1^\infty e^{-w} \frac{dw}{w}, $$ 
is the Euler-Mascheroni constant. Inserting this estimate in \eqref{SOC3} implies \eqref{SOC2}.

We next establish \eqref{SOCTail}. Similarly to \eqref{SOC3} one has 
$$ \sum_{p>Y} \frac{ \beta_{L_{j}}(p)  \overline{   \beta_{L_{k}}(p) } }{p^{2 \sigma}}    =   \delta_{j,k } \xi_{ j}   \int_Y^\infty  \frac{ u^{- 2 \sigma}}{ \log u} du + O(1).
$$
To estimate this integral we again use the substitution $ w= (2\sigma-1) \log u $.
Then similarly to \eqref{IntSOC} one obtains
$$ 
\int_Y^\infty  \frac{ u^{- 2 \sigma}}{ \log u} du = \int_{(2 \sigma-1)\log Y}^\infty  e^{-w} \frac{dw}{w}= \log \left(\frac{1}{(\sigma-1/2) \log Y}\right)+O(1),
$$ 
which implies \eqref{SOCTail}.
 
We finally turn to the proof of \eqref{SecondMomentTail}. By partial summation and \eqref{SOC} it follows that in the range $Y>  \exp(1/(2\sigma-1 )) $ we have
 $$ 
 \sum_{p>Y} \frac{|\beta_{L_j}(p)|^2}{p^{2\sigma}}
 \ll \int_Y^\infty u^{-2\sigma} \frac{du}{\log u } + \frac{ Y^{1-2\sigma}}{ \log Y} \ll \int_{ ( 2 \sigma-1) \log Y}^\infty e^{-w} \frac{dw}{w} +  \frac{ Y^{1-2\sigma}}{ \log Y} \ll \frac{ Y^{1-2\sigma}}{ (2 \sigma -1 )  \log Y}.
 $$ 
 Now, we will bound the contribution of the prime powers. By \eqref{beta bound 0} and \eqref{beta bound 1}, we have 
 $$ |\beta_{L_j} (p^n)|^2 \leq \frac{d p^{2(n-1)\theta}}{n^2}\sum_{i=1}^d |\alpha_{  j,i }(p)|^2, $$
 and hence by assumption A3 and partial summation, we have
\begin{align*}
  \sum_{\substack{p^n >  Y\\ n\geq 2}} \frac{|\beta_{L_j} (p^n)|^2}{p^{2n\sigma}} \ll & \sum_{p > \sqrt{Y}} \sum_{n=2}^\infty    \frac{\sum_{i=1}^d |\alpha_{  j,i}(p)|^2}{n^2 p^{2( \sigma -\theta)n + 2\theta}} + \sum_{p \leq \sqrt{Y}} \sum_{n \geq \frac{ \log Y}{ \log p}}     \frac{\sum_{i=1}^d |\alpha_{  j,i}(p)|^2}{n^2 p^{2( \sigma -\theta)n + 2\theta}}    \\
  \ll & \sum_{p > \sqrt{Y}}     \frac{\sum_{i=1}^d |\alpha_{  j,i}(p)|^2}{p^{4\sigma - 2\theta }} + \sum_{p \leq \sqrt{Y}}   \frac{\sum_{i=1}^d |\alpha_{  j,i}(p)|^2}{ Y^{2( \sigma -\theta) }p^{ 2\theta}}  \\
  \ll & Y^{ ( 1+ \varepsilon - 4 \sigma + 2 \theta)/2} + Y^{ - 2 ( \sigma- \theta) +  (1+\varepsilon - 2 \theta )/2 } 
  \end{align*}
  for any $\varepsilon>0$.
By choosing $\varepsilon $ sufficiently small, we obtain
 $$ \sum_{ p^n > Y } \frac{|\beta_{L_j} (p^n)|^2}{p^{2n\sigma}} \ll  \frac{ Y^{1-2\sigma}}{ (2 \sigma -1 )  \log Y},
 $$ 
 which completes the proof.
 \end{proof}
Let $L(s)$ be an $L$-function satisfying assumptions A1--A4.  Here and throughout, we define for $Y\geq 2$ and  $\sigma, t\in \mathbb{R}$ 
$$
R_{L, Y}(\sigma+it):= \sum_{p^n\leq Y } \frac{ \beta_{L}(p^n)}{p^{n (\sigma+it)}}  \text{ and } R_{L, Y}(\sigma, \X):= \sum_{p^n\leq Y } \frac{ \beta_{L}(p^n)\X(p)^n}{p^{n\sigma}}, 
$$
where $\{\X(p)\}_p$ is a sequence of independent random variables, uniformly distributed on the unit circle.

Our next result shows that $\log L(\sigma+it)$ can be approximated by $R_{L,Y}(\sigma+it)$ for $1/2+ 1/G(T) \leq  \sigma \leq 1$, and for all $t\in [T, 2T]$ except for an exceptional set with a very small measure. This is accomplished using the zero-density estimates \eqref{assumption zero}.
\begin{lem}\label{lem : Dir Poly approx}
Let $L(s)$ be an $L$-function satisfying assumptions A1--A4.  Let $T$ be large and $G(T)$ be such that $2<G(T)\leq c_0\sqrt{\log T/\log\log T},$ for some suitably small constant $c_0>0$. Put  $Y=e^{A G(T) \log\log T}$ for a constant $A\geq 5$. Then there is a positive constant $c_1$ such that for all $t \in [T, 2T]$ except for a set of measure $\ll T \exp(-c_1 \log T/ G(T))$, we have 
$$ 
\log L(\sigma + it) = R_{L,Y} (\sigma+it) + O\left(\frac{1}{(\log T)^{A/2-2}}\right)
$$
uniformly for $\sigma \geq 1/2+ 1/G(T)$.
\end{lem}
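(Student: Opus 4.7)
The plan is to follow the classical Selberg/Tsang-type hybrid strategy for approximating $\log L$ by a short prime polynomial: establish a formula writing $\log L(\sigma+it)$ as a smoothed short Dirichlet sum plus a small shifted-contour integral plus a sum of residues from non-trivial zeros of $L$ near $\sigma+it$, then invoke the zero-density hypothesis A4 to show that the zero contribution vanishes for all $t \in [T,2T]$ outside an exceptional set of the required measure.

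First I would set up a smoothed Perron-type identity. Choose a smooth cutoff $\phi : [0,\infty) \to [0,1]$ supported on $[0,1]$ and equal to $1$ on $[0, 1 - 1/\log Y]$, and let $M(z) := \int_0^{\infty} \phi(u) u^{z-1}\,du$ be its Mellin transform; then $M$ has a simple pole at $z=0$ of residue $1$, and satisfies $|M(z)| \ll_N (\log Y)^{N-1}|z|^{-N}$ on vertical lines for each $N \geq 1$. For $s = \sigma + it$ and $c > 1$, Mellin inversion yields
\begin{equation*}
\widetilde{R}(s) := \sum_{p^n} \frac{\beta_L(p^n)\,\phi(\log p^n/\log Y)}{p^{n s}} = \frac{1}{2\pi i}\int_{(c)} \log L(s+z)\,M(z)\,Y^{z}\,dz.
\end{equation*}
Shifting the contour to $\textup{Re}(z) = -1/(2G(T))$, the pole of $M$ at $z = 0$ contributes $\log L(s)$, while the shifted vertical integral is controlled via the convexity bound for $\log L$ on $\textup{Re}(s+z) = 1/2 + 1/(2G(T))$ (coming from A2 and A3) combined with the rapid decay of $M$, giving a contribution of $O(Y^{-1/(2G(T))}(\log T)^{O(1)}) = O((\log T)^{-A/2+O(1)})$. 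The smoothed/sharp discrepancy $\widetilde{R}(s) - R_{L,Y}(s)$, a Dirichlet polynomial supported on $p^n \in ((1-1/\log Y)Y, Y]$ of length $\leq T^{1/2}$ in the stated range of $G(T)$, is small in mean square by the Montgomery-Vaughan mean-value theorem together with Lemma \ref{lem:SecondMomentTail}, producing $\frac{1}{T}\int_T^{2T}|\widetilde{R} - R_{L,Y}|^2\,dt \ll (\log T)^{-2A}$; Chebyshev then ensures the set where this discrepancy exceeds $(\log T)^{-A/2+2}$ has measure negligible compared to the exceptional set below.

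The main obstacle is controlling the residue contribution $-\sum_{\rho} Y^{\rho - s}M(\rho - s)$ from non-trivial zeros $\rho = \beta + i\gamma$ with $\beta \in (1/2 + 1/(2G(T)), 2)$ crossed in the contour shift. The rapid decay of $M$ in the imaginary direction restricts the effective sum to zeros with $|\gamma - t| \leq 1$, tails being absorbed into the error term. I would define the $\sigma$-independent exceptional set
\begin{equation*}
E := \{\, t \in [T, 2T] : \exists \rho \text{ with } \beta > \tfrac{1}{2} + \tfrac{1}{2G(T)} \text{ and } |\gamma - t| \leq 1 \,\},
\end{equation*}
and estimate $|E|$ by a dyadic decomposition $E = \bigcup_{k \geq 1} E_k$, where $E_k$ consists of those $t$ for which some zero has $\beta \in [1/2 + 2^{k-1}/G(T), 1/2 + 2^{k}/G(T)]$. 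By A4 one has $|E_k| \ll N_L(1/2 + 2^{k-1}/G(T), 2T) \ll T^{1 - c_1 2^{k-1}/G(T)}(\log T)^{c_2}$; summing over $k = 1,\dots,O(\log G(T))$ (which suffices since $L$ has no zeros with $\textup{Re}(s) > 1 + o(1)$) and absorbing the factor $(\log T)^{c_2}$ into the exponential (possible in the range $G(T) \leq c_0\sqrt{\log T/\log\log T}$) gives $|E| \ll T\exp(-c_1 \log T / G(T))$. For $t \notin E$ the zero-residue sum vanishes, and combining all three error sources yields the advertised bound $O((\log T)^{-A/2 + 2})$, uniformly in $\sigma \geq 1/2 + 1/G(T)$.
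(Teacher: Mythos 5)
Your proposal follows the classical Selberg--Tsang hybrid philosophy (smoothed Dirichlet-polynomial approximation plus zero residues plus a zero-density argument), which is structurally quite different from the paper's argument. The paper applies a sharp, unsmoothed Perron formula, bounds its error term pointwise via an explicit estimate involving $Y^{-\sigma+1/4}\sum_{p,n}|\beta_L(p^n)|/(p^{5n/4}|\log(Y/p^n)|)$, and handles zeros by \emph{conditioning}: it defines the exceptional set to be those $t$ for which $L(\sigma+it+w)$ has a zero in a fixed rectangle (bounded by a single application of A4 with an extra factor $Y$ from the rectangle's height), and for $t$ outside this set the contour shift is legal because nothing is crossed. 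Your proposal instead shifts past zeros and tries to control their contribution plus a smoothed/sharp discrepancy. As written, there are three gaps.

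First, the Mellin setup is off: with $M(z)=\int_0^\infty\phi(u)u^{z-1}\,du$ and $c>1$, Mellin inversion yields the weight $\phi(p^n/Y)$, not $\phi(\log p^n/\log Y)$. This is cosmetic and fixable, but as stated the identity does not match the Dirichlet sum you claim to produce.

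Second, $\log L(s+z)$ has logarithmic branch points, not poles, at the zeros $z=\rho-s$ of $L$, so there is no ``residue'' $Y^{\rho-s}M(\rho-s)$ to collect when you shift past a zero. To produce a zero term of that shape one must first work with $-L'/L$ (which genuinely has simple poles at zeros with residue equal to the multiplicity) and then integrate, or avoid shifting past zeros at all, as the paper does. Your plan to discard $t\in E$ where a zero lies nearby mitigates this, but the formula you write for the contribution of far zeros $|\gamma-t|>1$ is not correct, and their combined cut-contributions still need to be estimated honestly.

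Third, and most seriously, the mean-square control of $\widetilde{R}-R_{L,Y}$ cannot deliver the exceptional-set bound required. The relevant moment $\sum_{p^n\in \text{transition range}}|\beta_L(p^n)|^2 p^{-2n\sigma}$ is, at best, of order $1/\log Y$, and even this requires assumption A5 (referenced implicitly through Lemma~\ref{lem:SecondMomentTail}), which is not among the hypotheses of this lemma (only A1--A4 are assumed). Your asserted bound $(\log T)^{-2A}$ is therefore unjustified. With the true mean-square of size $\gg 1/\log Y$, Chebyshev at threshold $(\log T)^{-A/2+2}$ gives an exceptional set of measure $\gg T(\log T)^{A-4}/\log Y$, which is polynomially large in $\log T$; but the lemma requires measure $\ll T\exp(-c_1\log T/G(T))$, which in the stated range $G(T)\le c_0\sqrt{\log T/\log\log T}$ decays \emph{faster than any power} of $\log T$. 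So the Chebyshev step cannot be made to fit. The paper's use of the sharp Perron formula sidesteps this: the residual sum over $p^n$ close to $Y$ is bounded pointwise in $t$ (for every $t$, with no exceptional set and no mean-value theorem) using $|\log(Y/p^n)|\gg |Y-p^n|/Y$ and the Ramanujan-on-average assumption A3.
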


\begin{proof} 
By Perron's formula we have
 \be\label{Perron 1} \begin{split}
R_{L,Y}(\sigma+it)= & \frac{1}{2 \pi i } \int_{c-iY}^{c+iY} \log L( \sigma+it + w) \frac{Y^w}{w} dw\\
& +O\left(Y^{-\sigma+1/4}\sum_{p}\sum_{n=1}^{\infty} \frac{| \beta_{L}(p^n)|}{p^{5n/4}|\log(Y/p^n)|}\right)
\end{split} \ee
where $c = 5/4-\sigma$. To bound the error term of this last estimate, we split the sum {over primes} into three parts: $p^n\leq Y/2$, $Y/2<p^n<2Y$ and $p^n\geq 2Y$. The terms in the first and third parts satisfy $|\log(Y/p^n)|\geq \log 2$, and hence their contribution is 
$$
\ll Y^{-\sigma+1/4}\sum_{p}\sum_{n=1}^{\infty} \frac{| \beta_{L}(p^n)|}{p^{5n/4}}=  Y^{-\sigma+1/4}
\left(\sum_{p} \frac{|\beta_{L}(p)|}{p^{5/4}}+O(1) \right)\ll Y^{-1/4}
$$
by \eqref{beta bound 0} and \eqref{beta bound 2}. 
To handle the contribution of the terms $Y/2<p^n<2Y$, we put $r=Y-p^n$, and use {the lower bound} $|\log(Y/p^n)|\gg |r|/Y$. Then the contribution of these terms is 
$$\ll Y^{-\sigma+\theta-5/4}\sum_{r\leq Y}\frac{1}{r}\ll  Y^{-1/2+\theta} \log Y.$$

Let $w_0= -1/(2G(T))$ and assume 
 that $L(\sigma+it +w )$ has no zeros in the half-strip  given by
 $ \re(w) \geq  -3/(4G(T)) $, $ | \im(w) | \leq Y+1$. 
 Then in the slightly smaller half-strip $\{ w : \re(w) \geq w_0, |\im(w)|\leq Y\}$ we have
\be\label{L'/L 1}
 \frac{L'}{L}(\sigma+it +w) \ll  G(T) \log T 
\ee
 (see    Proposition 5.7 in \cite{IK}).
 Observe that this holds  for all $ t \in [T,2T]$ except for $t$ in a set of measure 
  \begin{align*}
  & \ll Y \cdot N(\tfrac12+\tfrac{1}{4 G(T)}, 2T) \ll T^{1-c_2/(4G(T))} (\log T)^{c_3} \exp(A G(T)\log\log T) \\
  &  \ll T \exp\left(-c_1 \frac{\log T}{G(T)}\right)
  \end{align*}
for some constants $c_1, c_2, c_3>0$  by \eqref{assumption zero} and our assumption on $G(T)$. Now, integrating both sides of \eqref{L'/L 1} along the horizontal segment from $w$ to $w+B$, where $B$ is sufficiently large, we see that for such $t$ we have
$$ \log L(\sigma+it+w) \ll G(T) \log T,$$ 
for all $w$ such that $\re(w) \geq w_0, |\im(w)|\leq Y$.
Using this and shifting the contour to the left in \eqref{Perron 1}, we obtain
\begin{align*}
 R_{L,Y}(\sigma+it)-&\log L(\sigma+it) \\
 =& \frac{1}{2 \pi i } \left(\int_{c-iY}^{w_0-iY} + \int_{w_0-iY}^{w_0+iY} + \int_{w_0+iY}^{c+iY}\right) \log L(\sigma+it + w) \frac{Y^w}{w} dw  \\
 & +O\left(Y^{-1/2+\theta} \log Y+ Y^{-1/4}\right) \\
 \ll &  G(T)( \log T) Y^{w_0} \log Y  \ll  \frac{1}{(\log T)^{ A/2-2} }.
 \end{align*} 
  This proves the lemma.
\end{proof}

We now establish the analogous result for the random model $\log L(\sigma, \X)$.
 
 \begin{lem}\label{ApproxDirichletRandom}
 Let $\sigma>1/2$ and $Y\geq  \exp(1/(2\sigma-1 )) $.  Then, for all $\varepsilon>0$ we have 
 $$  \pr\big(\left| \log L_j (\sigma, \X)- R_{L_j, Y}(\sigma, \X)\right| \geq \varepsilon\big) \ll   \frac{Y^{1-2\sigma }}{ \varepsilon^2  (2 \sigma-1)\log Y } .$$
 \end{lem}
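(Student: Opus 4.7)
The plan is to reduce to a second moment estimate via Chebyshev's inequality, and then exploit the orthonormality of $\{\X(p)^n\}$ in $L^2$ together with the tail bound \eqref{SecondMomentTail} just proved in Lemma \ref{lem:SecondMomentTail}.

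First I would expand $\log L_j(\sigma,\X)$ as a Dirichlet series in $\X(p)$. Taking the logarithm of the random Euler product and recalling $\beta_{L_j}(p^k)=\frac{1}{k}\sum_{i=1}^d \alpha_{j,i}(p)^k$, one has the pointwise (almost sure) expansion
$$
\log L_j(\sigma,\X) \;=\; \sum_p \sum_{k=1}^\infty \frac{\beta_{L_j}(p^k)\,\X(p)^k}{p^{k\sigma}},
$$
whose almost sure convergence for $\sigma>1/2$ was already noted via Kolmogorov's three series theorem. Subtracting off the truncated sum $R_{L_j,Y}(\sigma,\X)$ gives
$$
\log L_j(\sigma,\X)-R_{L_j,Y}(\sigma,\X) \;=\; \sum_{p^n>Y}\frac{\beta_{L_j}(p^n)\,\X(p)^n}{p^{n\sigma}}.
$$

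Next I would compute the second moment using orthogonality. Since the $\X(p)$ are independent and each is uniform on the unit circle, $\ex(\X(p)^n\overline{\X(q)^m})=\delta_{p,q}\delta_{n,m}$ for all $n,m\geq 1$, so the family $\{\X(p)^n\}_{p,n\geq 1}$ is orthonormal in $L^2(\Omega)$. Hence
$$
\ex\!\left(\bigl|\log L_j(\sigma,\X)-R_{L_j,Y}(\sigma,\X)\bigr|^2\right) \;=\; \sum_{p^n>Y}\frac{|\beta_{L_j}(p^n)|^2}{p^{2n\sigma}}.
$$
Since we assume $Y\geq\exp(1/(2\sigma-1))$, the hypothesis of estimate \eqref{SecondMomentTail} is satisfied, and that bound yields
$$
\ex\!\left(\bigl|\log L_j(\sigma,\X)-R_{L_j,Y}(\sigma,\X)\bigr|^2\right) \;\ll\; \frac{Y^{1-2\sigma}}{(2\sigma-1)\log Y}.
$$

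Finally, a direct application of Chebyshev's inequality gives
$$
\pr\bigl(|\log L_j(\sigma,\X)-R_{L_j,Y}(\sigma,\X)|\geq\varepsilon\bigr) \;\leq\; \frac{1}{\varepsilon^2}\,\ex\!\left(\bigl|\log L_j(\sigma,\X)-R_{L_j,Y}(\sigma,\X)\bigr|^2\right) \;\ll\; \frac{Y^{1-2\sigma}}{\varepsilon^2(2\sigma-1)\log Y},
$$
as desired. There is no real obstacle here: all the work has already been done in Lemma \ref{lem:SecondMomentTail}, which packages the tail of the prime-power sum in exactly the form needed. The only minor point is the orthogonality computation, which is immediate from the uniform distribution of $\X(p)$ on the unit circle.
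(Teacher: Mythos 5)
Your proposal is correct and follows essentially the same route as the paper: Chebyshev's inequality, then an orthogonality computation reducing the second moment to $\sum_{p^n>Y}|\beta_{L_j}(p^n)|^2 p^{-2n\sigma}$, and finally an appeal to the tail bound \eqref{SecondMomentTail} from Lemma~\ref{lem:SecondMomentTail}. The order of steps (Chebyshev first vs.\ last) is the only cosmetic difference.
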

 \begin{proof}
By Chebyshev's inequality we have
\begin{equation}\label{ChebyIneq}
 \pr\big(\left| \log L_j (\sigma, \X)- R_{L_j, Y}(\sigma, \X)\right|\geq \varepsilon\big)
\leq \frac{1}{\varepsilon^2} \ex\left(\left| \log L_j (\sigma, \X)- R_{L_j, Y}(\sigma, \X)\right|^2\right).
 \end{equation}
Furthermore, observe that 
\begin{align*} \ex\left(\left| \log L_j (\sigma, \X)- R_{L_j, Y}(\sigma, \X)\right|^2\right) 
&= \sum_{p_1^n, p_2^m> Y} \frac{\beta_{L_j}(p_1^n)\overline{\beta_{L_j}(p_2^m)}}{p_1^{n\sigma}p_2^{m\sigma}}\ex\left(\X(p_1)^n\overline{\X(p_2)}^m\right) \\
& =\sum_{p^n> Y} \frac{|\beta_{L_j}(p^n)|^2}{p^{2n\sigma}}, 
\end{align*}
 since $\ex\left(\X(p_1)^n\overline{\X(p_2) ^m}\right)= 1$ only when $n=m$ and $p_1=p_2$, and is $0$ otherwise. Combining  this identity with Lemma \ref{lem:SecondMomentTail} and  \eqref{ChebyIneq} completes the proof.
  \end{proof}


We also need a standard mean value estimate, which follows from Lemma 3.3 of \cite{Ts}.
\begin{lem}\label{lem:dir poly mmt 1}
Let $    z \geq 2 $ be a real number and $k$ be a positive integer such that $ k \leq \log T/ \log z$. Let $\{a(p)\}_{p}$ be a sequence of complex numbers. Let $\{\X(p)\}_p$ be a sequence of independent random variables uniformly distributed on the unit circle. Then we have
$$ \frac{1}{T} \int_T^{2T} \bigg|  \sum_{   p \leq z } \frac{ a(p)}{ p^{it}} \bigg|^{2k} dt \ll k! \bigg(  \sum_{  p \leq z } |a(p)|^2 \bigg)^k,$$
and
$$ \E \bigg( \bigg|  \sum_{   p \leq z } a(p)\X(p) \bigg|^{2k}   \bigg)   \leq  k! \bigg(  \sum_{  p \leq z } |a(p)|^2 \bigg)^k. $$
\end{lem}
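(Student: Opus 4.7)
The plan is to reduce both estimates to a single combinatorial identity obtained by expanding the $2k$th power and extracting the diagonal contribution. I would begin with the random version, since it is the cleanest of the two and displays the structure that the integral version will need to mimic.

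For the random bound, I expand
\begin{equation*}
\E \left(\left|\sum_{p\le z} a(p)\X(p)\right|^{2k}\right) = \sum_{p_1,\dots,p_k}\sum_{q_1,\dots,q_k} a(p_1)\cdots a(p_k)\overline{a(q_1)\cdots a(q_k)}\, \E\!\left(\prod_i \X(p_i)\prod_j\overline{\X(q_j)}\right).
\end{equation*}
Because the $\X(p)$ are independent and uniform on the unit circle, $\E(\X(p)^a\overline{\X(p)}^b)$ vanishes unless $a=b$, in which case it equals $1$. Hence only the "diagonal" terms survive, namely those where the multiset $\{p_1,\dots,p_k\}$ equals $\{q_1,\dots,q_k\}$. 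Grouping by the exponent vector $(e_p)$ with $\sum_p e_p=k$, the surviving contribution equals
\begin{equation*}
\sum_{(e_p):\,\sum e_p=k}\binom{k}{e_{p_1},e_{p_2},\dots}^{\!2}\prod_p |a(p)|^{2e_p}.
\end{equation*}
Since $\binom{k}{e_{p_1},e_{p_2},\dots}\le k!$, this is bounded by $k!$ times $\sum_{(e_p)}\binom{k}{e_{p_1},\dots}\prod_p|a(p)|^{2e_p}$, which by the multinomial theorem equals $k!\bigl(\sum_{p\le z}|a(p)|^2\bigr)^k$, as claimed.

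For the integral version, I would follow the same combinatorial skeleton. Expand the $2k$th power so that the integral becomes a sum over $2k$-tuples of primes of
\begin{equation*}
a(p_1)\cdots a(p_k)\overline{a(q_1)\cdots a(q_k)}\cdot \frac{1}{T}\int_T^{2T}\left(\frac{q_1\cdots q_k}{p_1\cdots p_k}\right)^{\!it} dt.
\end{equation*}
The diagonal terms, where $p_1\cdots p_k=q_1\cdots q_k$, contribute exactly the same quantity that appeared in the random case, by unique factorization. Off-diagonal terms are estimated by the standard bound $\int_T^{2T}(b/a)^{it}dt\ll 1/|\log(b/a)|$. Because the hypothesis $k\le \log T/\log z$ guarantees $p_1\cdots p_k,q_1\cdots q_k\le z^k\le T$, the quantity $|\log(q_1\cdots q_k/p_1\cdots p_k)|$ is bounded below by $c/T$ whenever the two products differ, and a Montgomery–Vaughan style count of off-diagonal tuples shows their total contribution is absorbed into the implicit constant. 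The main term then satisfies the same multinomial bound $k!(\sum|a(p)|^2)^k$ derived above.

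The only real obstacle is the off-diagonal bookkeeping for the integral: one has to verify that the number of $2k$-tuples with $p_1\cdots p_k\ne q_1\cdots q_k$ but $p_1\cdots p_k/q_1\cdots q_k$ close to $1$ is controlled, and that the denominators $|\log(p_1\cdots p_k/q_1\cdots q_k)|$ are uniformly bounded below away from the diagonal by at least $1/(p_1\cdots p_k)$. Since this is a classical mean value estimate for Dirichlet polynomials (Montgomery–Vaughan) and is already carried out in Lemma~3.3 of \cite{Ts}, in practice I would simply cite that result rather than redo the off-diagonal estimate from scratch, and only present the expansion and diagonal count in detail, which is identical for both assertions.
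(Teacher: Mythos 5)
Your proof is correct and follows essentially the same route as the paper: the paper itself gives no argument for this lemma and simply cites Lemma 3.3 of Tsang's thesis, which is exactly the classical mean-value estimate for Dirichlet polynomials that you invoke for the off-diagonal terms. Your expansion-and-diagonal-count for the random version is a nice and complete addition; the identity $\E\bigl(|\sum a(p)\X(p)|^{2k}\bigr)=\sum_{(e_p)}\binom{k}{\mathbf e}^2\prod_p|a(p)|^{2e_p}\le k!\bigl(\sum|a(p)|^2\bigr)^k$ is exact and correctly yields the stated $\le$ rather than merely $\ll$.

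One caution on the integral version: the observation that $|\log(p_1\cdots p_k/q_1\cdots q_k)|\gg 1/T$ for distinct integers $\le T$ only yields $\int_T^{2T}(m/n)^{it}\,dt\ll T$, which is the trivial bound and, paired with a naive count of off-diagonal tuples, would only give $\bigl(\sum|a(p)|\bigr)^{2k}$, far weaker than what you need. The Montgomery--Vaughan mean-value theorem is a genuinely stronger input (Hilbert-type inequality, not just a lower bound on $|\log(m/n)|$ plus a count), and it is what produces the error term $O(z^k)\sum_n|b(n)|^2$ so that the hypothesis $z^k\le T$ lets the off-diagonal be absorbed. Since you defer to Tsang's Lemma 3.3 for that step, the gap is papered over in practice, but the intermediate heuristic as written does not in fact prove the bound.
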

Using this result we bound the ($2k$)th moments of $R_{L_j,Y} (\sigma +it)$ and $R_{L_j,Y} (\sigma, \X)$. 

\begin{lem}\label{lem:dir poly mmt 2}
Let $1\leq j\leq J$.  
Let $\sigma\geq 1/2$ and $Y\geq 2 $ be real numbers. Then for all positive integers $k \leq \log T/ \log Y $, we have
$$ 
\frac{1}{T} \int_T^{2T} | R_{L_j,Y} (\sigma +it) |^{2k} dt \ll k!(\xi_j \log \log Y + O(\sqrt{\log \log Y}))^k, 
$$
and 
$$ \E \big(| R_{L_j,Y} (\sigma, \X) |^{2k} \big) \ll k!(\xi_j\log \log Y+O(\sqrt{\log \log Y}) )^k. $$
\end{lem}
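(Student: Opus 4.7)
The plan is to split $R_{L_j,Y}$ into its prime part, prime-square part, and higher prime-power tail, bound each piece by Lemma \ref{lem:dir poly mmt 1} (or an immediate adaptation of it), and recombine via Minkowski's inequality in $L^{2k}$. Concretely I write
$$R_{L_j,Y}(s) = A(s) + B(s) + C(s),$$
where $A(s)=\sum_{p\leq Y}\beta_{L_j}(p)/p^{s}$, $B(s)=\sum_{p\leq\sqrt{Y}}\beta_{L_j}(p^2)/p^{2s}$, and $C(s)$ collects the terms with $p^n\leq Y$, $n\geq 3$. The intuition is that only $A$ can contribute the main $\xi_j\log\log Y$ term, while $B$ and $C$ will be uniformly $O(1)$ in the $L^{2k}$-norm and will only create the $O(\sqrt{\log\log Y})$ error when the final square is expanded.

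For $A$, Lemma \ref{lem:dir poly mmt 1} applied with $a(p)=\beta_{L_j}(p)/p^{\sigma}$ and $z=Y$ (the hypothesis $k\leq \log T/\log Y$ being exactly the one we have) yields
$$\frac{1}{T}\int_T^{2T}|A(\sigma+it)|^{2k}\,dt\ll k!\left(\sum_{p\leq Y}\frac{|\beta_{L_j}(p)|^2}{p^{2\sigma}}\right)^{k}\leq k!\left(\xi_j\log\log Y+O(1)\right)^{k},$$
where the second inequality uses $\sigma\geq 1/2$ so that $p^{2\sigma}\geq p$, together with assumption A5 in the form \eqref{SOC}. The random-model version for $A$ is handled identically by the second bound in Lemma \ref{lem:dir poly mmt 1}.

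For $B$, I observe that the integers $\{p^2:p\text{ prime}\}$ are pairwise distinct and that $p_1^2\cdots p_k^2=q_1^2\cdots q_k^2$ is equivalent to $\{p_i\}=\{q_j\}$ as multisets, so the mean-value proof underlying Lemma \ref{lem:dir poly mmt 1} applies verbatim with $p^2$ in place of $p$ (the largest frequency is still $\leq Y$), giving an $L^{2k}$-bound of $k!\left(\sum_{p}|\beta_{L_j}(p^2)|^2/p^{4\sigma}\right)^{k}\ll k!\,C_1^{k}$, where the $\ell^2$-sum is finite by \eqref{prime sum bounds} since $\sigma\geq 1/2$. For the random version the same argument works because $\{\X(p)^2\}_p$ is again a family of independent random variables uniformly distributed on the unit circle, so the orthogonality relations used in the proof of Lemma \ref{lem:dir poly mmt 1} survive. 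For $C$, the bound $|\beta_{L_j}(p^n)|\leq dp^{n\theta}/n$ with $\theta<1/2$ combined with \eqref{prime sum bounds} gives the deterministic estimate $|C(\sigma+it)|,\,|C(\sigma,\X)|\leq \sum_{n\geq 3}\sum_{p}|\beta_{L_j}(p^n)|/p^{n/2}=O(1)$, so $\|C\|_{2k}=O(1)$.

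Putting the three pieces together by Minkowski's inequality (with respect to $dt/T$ for the integral, and to the probability measure for the expectation) and using $(k!)^{1/(2k)}\geq 1$ to absorb the deterministic $O(1)$, I obtain
$$\|R_{L_j,Y}\|_{2k}\leq (k!)^{1/(2k)}\left(\sqrt{\xi_j\log\log Y+O(1)}+O(1)\right).$$
Raising to the $2k$-th power and expanding $(\sqrt{a}+b)^2=a+2b\sqrt{a}+b^2$ with $a=\xi_j\log\log Y+O(1)$ and $b=O(1)$ turns the squared bracket into $\xi_j\log\log Y+O(\sqrt{\log\log Y})$, which gives exactly the claimed bound. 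The only real point that needs care is the prime-square step: one must verify that the proof of Lemma \ref{lem:dir poly mmt 1} rests only on distinctness of the frequencies and the unique-factorization combinatorics of the diagonal (for the integral) and on the orthogonality of characters on the circle (for the expectation), both of which survive the substitution $p\mapsto p^2$.
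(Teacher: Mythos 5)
Your proof is correct and follows essentially the same route as the paper: write $R_{L_j,Y}=A+B+O(1)$ with $A$ the prime part and $B$ the prime-square part, estimate each piece via Lemma \ref{lem:dir poly mmt 1} (or the orthogonality of $\{\X(p)\}$), and recombine with Minkowski, finishing by expanding $(\sqrt a+b)^2=a+O(\sqrt a)$. The one place where you and the paper diverge is the treatment of $B$. You reopen the proof of Lemma \ref{lem:dir poly mmt 1} and argue that it survives the substitution $p\mapsto p^2$ because the frequencies $p^2\le Y$ remain distinct, $\prod p_i^2=\prod q_j^2$ forces the multisets to agree, and $\X(p)^2$ is again uniform on the circle --- all correct, but it asks the reader to re-derive the mean-value lemma for a new class of frequencies. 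The paper instead performs the linear substitution $t\mapsto 2t$, turning $\int_T^{2T}\bigl|\sum_{p\le\sqrt Y}\beta_{L_j}(p^2)p^{-2\sigma-2it}\bigr|^{2k}\,dt$ into $\tfrac12\int_{2T}^{4T}\bigl|\sum_{p\le\sqrt Y}\beta_{L_j}(p^2)p^{-2\sigma-it}\bigr|^{2k}\,dt$, which is a literal instance of Lemma \ref{lem:dir poly mmt 1} with $z=\sqrt Y$ and $T$ replaced by $2T$ (the hypothesis $k\le\log T/\log Y\le 2\log(2T)/\log Y$ is still met). That trick keeps the auxiliary lemma as a black box, which is cleaner, but both arguments are sound and give the same constant. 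Everything else in your write-up matches the paper's proof step for step.
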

\begin{proof}
We only prove the first estimate, as the second is similar and simpler. First, by \eqref{prime sum bounds} we have 
$$ R_{L_j ,Y} (\sigma +it)= \sum_{p\leq Y} \frac{\beta_{L_j} (p)}{p^{\sigma+it}} + \sum_{p\leq \sqrt{Y}} \frac{\beta_{L_j} (p^2)}{p^{2\sigma+2it}}+ O(1) 
$$
uniformly in $Y$ and $t$. Hence it follows from Minkowski's inequality that 
\begin{equation}\label{MinkowskiMom}
\begin{aligned}
\left(\int_T^{2T} | R_{L_j ,Y} (\sigma +it) |^{2k} dt\right)^{1/2k}
  \leq &  \left(\int_T^{2T} \left|\sum_{p\leq Y} \frac{\beta_{L_j} (p)}{p^{\sigma+it}}\right| ^{2k} dt \right)^{1/2k}  \\
  & + \left( \int_T^{2T} \left| \sum_{p\leq \sqrt{Y}} \frac{\beta_{L_j} (p^2)}{p^{2\sigma+2it}}\right|^{2k} dt \right)^{1/2k}   + C_1 T^{1/2k},
\end{aligned}
\end{equation}
for some constant $C_1>0$. Now, it follows from Lemma \ref{lem:dir poly mmt 1} together with Stirling's formula and assumption A5 that 
\begin{equation}\label{MomPrimesA}
\frac{1}{T} \int_T^{2T} \left|\sum_{p\leq Y} \frac{\beta_{L_j}(p)}{p^{\sigma+it}}\right| ^{2k} dt \ll k! \left( \sum_{p\leq Y} \frac{|\beta_{L_j}(p)|^2}{p^{2\sigma}}\right)^{k} \leq k! (\xi_j\log \log Y+O(1))^k, 
\end{equation}
since $\sigma\geq 1/2$. Similarly, by Lemma \ref{lem:dir poly mmt 1} and a simple change of variable we have
\begin{equation}\label{MomSquaresP} \begin{split}
\int_T^{2T} \left| \sum_{p\leq \sqrt{Y}} \frac{\beta_{L_j} (p^2)}{p^{2\sigma+2it}}\right|^{2k} dt & = \frac{1}{2}\int_{2T}^{4T} \left| \sum_{p\leq \sqrt{Y}} \frac{\beta_{L_j} (p^2)}{p^{2\sigma+it}}\right|^{2k} dt  \\
&  \ll 
T k! \left(\sum_{p\leq Y} \frac{|\beta_{L_j} (p^2)|^2}{p^2}\right)^{k}.
\end{split} \end{equation}
Now, by \eqref{beta bound 0}, \eqref{beta bound 1}, assumption A3 and partial summation we obtain
$$ \sum_{p\leq Y} \frac{|\beta_{L_j} (p^2)|^2}{p^2} \ll  \sum_{p\leq Y} \sum_{i=1}^d\frac{|\alpha_{j, i}(p)|^2}{p^{2-2\theta}} \ll 1.
$$
Combining this estimate with \eqref{MinkowskiMom}, \eqref{MomPrimesA} and \eqref{MomSquaresP} completes the proof.
\end{proof}
As a consequence of this result and Lemma \ref{lem : Dir Poly approx}, we establish Lemma \ref{LargeDevLj}.
\begin{proof}[Proof of Lemma \ref{LargeDevLj}] 
Let $Y=e^{10G(T) \log\log T}$. By Lemma \ref{lem : Dir Poly approx} for all $t \in [T, 2T]$ except for a set of measure $\ll T \exp(-c_1 \log T/ G(T))$ (for some constant $c_1>0$), we have
$$ 
\log L_j(\sigma + it) = R_{L_j,Y} (\sigma+it) + O\left(\frac{1}{(\log T)^3}\right). 
$$
Therefore, it follows from Lemma \ref{lem:dir poly mmt 2} that 
\begin{equation}\label{BoundLargeDevL}
\begin{aligned}
& \frac{1}{T} \textup{meas} \left\{t\in [T, 2T] : |\log L_j(\sigma+it)|>\tau \right\}\\
 &\leq 
\frac{1}{T} \textup{meas} \left\{t\in [T, 2T] : |R_{L_j,Y} (\sigma+it)|>\tau- \frac{1}{\log T}\right\} + O\left(e^{-c_1 \frac{\log T}{G(T)}} \right)\\
& \leq \frac{1}{(\tau-1/\log T)^{2k}} \frac{1}{T} \int_{T}^{2T} |R_{L_j,Y} (\sigma+it)|^{2k} dt + O\left(  e^{-c_1 \frac{\log T}{G(T)}}  \right) \\
& \leq k! \left(\frac{\xi_j \log G(T)+ O(  \sqrt{\log G( T)}  + \log_3 T )}{(\tau-1/\log T)^2}\right)^k+ O\left(  e^{-c_1 \frac{\log T}{G(T)}}  \right).
\end{aligned}
\end{equation}
Using Stirling's formula, and choosing $$k=\lfloor (\tau-1/\log T)^2/(\xi_j \log G(T)+ C \max\{ \sqrt{\log G( T)}, \log_3 T\}  ) \rfloor $$ for some suitably large constant $C$ implies the result. 

We now establish the analogous bound for $\log L_j(\sigma, \X)$. Let $ {\varepsilon= 1/(\log T)^2}$ and $ {Y= e^{G(T) (\log\log T)^{5}}}.$ Then, it follows from Lemma \ref{ApproxDirichletRandom} that 
\begin{align*}
\pr\left(\left| \log L_j (\sigma, \X)- R_{L_j, Y}(\sigma, \X)\right|>\frac{1}{(\log T)^2}\right) & \ll \frac{ (\log T)^4}{ ( \log \log T)^5}  e^{-2(\log\log T)^5}
   \ll    e^{-(\log\log T)^5}.
\end{align*}
Now, using the same argument leading to \eqref{BoundLargeDevL} together with Lemma \ref{lem:dir poly mmt 2} we obtain 
\begin{multline*}
 \pr\left(|\log L_j(\sigma, \X)|>\tau \right)
 \leq 
\pr \left(|R_{L_j,Y} (\sigma, \X)|>\tau- \frac{1}{\log T}\right)  
+ O(e^{-(\log\log T)^5})  \\
 \leq k! \left(\frac{\xi_j \log G(T)+ O(\sqrt{ \log G(T)}+ \log_3 T  )}{(\tau-1/\log T)^2}\right)^k+ O( e^{-(\log\log T)^5}).  
\end{multline*}
Making the same choice of $k$ and using Stirling's formula completes the proof.
\end{proof}

We extend the $\X(p)$ multiplicatively by defining for $n=\prod_p p^\alpha$, $\X(n) =\prod_p \X(p)^\alpha$. We finish this section with the following standard lemma, which we shall use in Section \ref{sec:Discrepancy} to prove that the characteristic function of the joint distribution of $\log L_j(\sigma+it)$ is very close to that of the joint distribution of $\log L_j(\sigma, \X)$.  
\begin{lem}\label{lem:dir poly mmt 3}
Let $b_j(n)$ be complex numbers, such that $|b_j(n)|\leq C $ for all $1\leq j\leq J$ and $ n \geq 1 $ and for some constant $ C>0$. 
Let $k_j, k_j'$ be positive integers for $ j \leq J$  and write $k = \sum_{j\leq J} k_j$ and $k' = \sum_{j\leq J} k_j'$. 
Then uniformly for $Y, T\geq 2$ we have  
\be\notag 
\begin{split}
& \frac1T \int_T^{2T} \prod_{j=1}^J \left(\sum_{n \leq Y} \frac{ b_j (n)}{ n^{it}}\right)^{k_j}  \left(\overline{\sum_{ n \leq Y} \frac{b_j (n)}{n^{it}}}\right)^{k_j'}  dt \\
& = \E \bigg(\prod_{j=1}^J \left(\sum_{n \leq Y} b_j (n)\X(n)\right)^{k_j}  \left(\overline{\sum_{ n \leq Y} b_j (n)\X(n)}\right)^{k_j'} \bigg)+ O \bigg( \frac{ (CY^2)^{k+ k'}}{T}   \bigg).
\end{split}
\ee
\end{lem}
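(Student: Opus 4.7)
The plan is to expand both sides as a finite sum over tuples of integers $\le Y$ and match them term by term via two parallel orthogonality relations: one for $t$-averages on the left, one for the random variables $\X(p)$ on the right.

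Let me write $\mathbf{n}=(n_{j,i})_{j\le J,\, i\le k_j}$ and $\mathbf{m}=(m_{j,i})_{j\le J,\, i\le k_j'}$ with each entry in $\{1,\dots,\lfloor Y\rfloor\}$, and set $N=\prod_{j,i}n_{j,i}$, $M=\prod_{j,i}m_{j,i}$. Expanding the factors, the left-hand integrand becomes
$$
\sum_{\mathbf n,\mathbf m}\left(\prod_{j,i}b_j(n_{j,i})\right)\left(\prod_{j,i}\overline{b_j(m_{j,i})}\right)\left(\frac{M}{N}\right)^{it},
$$
and, using the multiplicativity $\prod_{j,i}\X(n_{j,i})=\X(N)$, the right-hand expectation becomes exactly the same double sum, but with $\E[\X(N)\overline{\X(M)}]$ in place of the $t$-integral of $(M/N)^{it}$. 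The two orthogonalities I invoke are
$$
\frac{1}{T}\int_T^{2T}(M/N)^{it}\,dt=\mathbf{1}_{M=N}+O\!\left(\frac{1}{T\,|\log(M/N)|}\right),\qquad \E[\X(N)\overline{\X(M)}]=\mathbf{1}_{M=N},
$$
the latter because the $\X(p)$ are independent and uniformly distributed on the unit circle.

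Consequently, the diagonal $M=N$ contributions cancel exactly on the two sides, and the entire discrepancy comes from the off-diagonal tail of the integral. Since $M$ and $N$ are distinct positive integers with $M,N\le Y^{\max(k,k')}\le Y^{k+k'}$, the elementary inequality $|\log(M/N)|\gg 1/\max(M,N)\ge Y^{-(k+k')}$ applies, so each off-diagonal term contributes $\ll Y^{k+k'}/T$. With at most $Y^{k+k'}$ tuples $(\mathbf n,\mathbf m)$ and a product of $k+k'$ coefficients bounded by $C^{k+k'}$, the cumulative off-diagonal error is
$$
\ll C^{k+k'}\cdot Y^{k+k'}\cdot \frac{Y^{k+k'}}{T}=\frac{(CY^2)^{k+k'}}{T},
$$
matching the stated bound.

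The argument is essentially bookkeeping; there is no real obstacle. The only point that deserves a moment's verification is the identification of the diagonal constraint $\prod_{j,i}n_{j,i}=\prod_{j,i}m_{j,i}$ as the common event picked out by both orthogonalities, which is clear from $\prod_{j,i}\X(n_{j,i})=\X(N)$ together with the fact that $(M/N)^{it}$ is constant in $t$ precisely when $M=N$.
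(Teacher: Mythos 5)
Your proposal is correct and follows essentially the same approach as the paper: expand both sides into double sums over tuples, identify the diagonal contribution with the expectation via $\E[\X(N)\overline{\X(M)}]=\mathbf{1}_{M=N}$, and bound the off-diagonal tail using $|\log(M/N)|\gg Y^{-(k+k')}$ together with the count $\ll Y^{k+k'}$ of tuples and the bound $C^{k+k'}$ on the coefficient products. The bookkeeping matches the paper's $\Sigma_1/\Sigma_2$ decomposition line for line.
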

\begin{proof}
We have
\begin{align*}
  &\frac1T \int_T^{2T}  \prod_{j=1}^J \left(\sum_{n \leq Y} \frac{ b_j (n)}{ n^{it}}\right)^{k_j}  \left(\overline{\sum_{ n \leq Y} \frac{b_j (n)}{n^{it}}}\right)^{k_j'}  dt \\
\qquad \qquad& =  \frac1T \int_T^{2T} \bigg( \sum_{n_{i,j} \leq Y}    \frac{ b_1 (n_{1,1})   \cdots  
b_1 ( n_{k_1 , 1} ) }{(n_{1,1} \cdots n_{ k_1 , 1 } )^{it} } \   \cdots    \  \frac{ b_J (n_{1,J})     \cdots    b_J ( n_{k_J , J} ) }{(n_{1,J} \cdots n_{ k_J , J } )^{it} }\bigg) \\
& \qquad \qquad\qquad \cdot \bigg(\sum_{m_{i,j} \leq Y}    \frac{ \overline{ b_1 (m_{1,1})} \cdots \overline{b_1 ( m_{k_1' , 1} ) }}{(m_{1,1} \cdots m_{ k_1' , 1 } )^{-it} } \  \cdots    \  \frac{ \overline{b_J (m_{1,J})} \cdots  \overline{b_J ( m_{k_J' , J} )} }{(m_{1,J} \cdots m_{ k_J' , J } )^{-it} }\bigg)\;\; dt .
\end{align*}
The contribution of the diagonal terms  is
\begin{align*}
\Sigma_1=& \sum_{\substack{ n_{i,j}, m_{i,j} \leq Y \\   \prod n_{i,j} = \prod m_{i,j}   } }  \prod_{j=1}^J  \left(\prod_{i=1}^{k_j} b_j (n_{i,j})  \prod_{i=1}^{k_j'} \overline{b_j (m_{i,j})} \right)   \\
 & = \E \bigg(\prod_{j=1}^J \bigg(\sum_{n \leq Y} b_j (n)\X(n)\bigg)^{k_j}  \bigg(\overline{\sum_{ n \leq Y} b_j (n)\X(n)}\bigg)^{k_j'} \bigg).
\end{align*}
The off-diagonal contribution is
\begin{align*}
\Sigma_2   = &  \sum_{\substack{ n_{i,j}, m_{i,j} \leq Y \\   \prod n_{i,j} \neq \prod m_{i,j}   } }  \prod_{j=1}^J  \bigg(\prod_{i=1}^{k_j} b_j (n_{i,j})  \prod_{i=1}^{k_j'} \overline{b_j (m_{i,j}) } \bigg)\bigg(\frac{(m/n)^{2iT} - (m/n)^{iT} }{i T \log (m/n)}\bigg),
\end{align*}
where $n =\prod n_{i,j} $ and $ m =\prod m_{i,j} $. Since $ n, m \leq Y^{k+k'} $ and $n\neq m $,
$$ \frac{1}{|\log (m/n)| }  \ll Y^{k+ k'} . 
$$ 
Hence, we derive
\begin{align*}
\Sigma_2   \ll  \frac{ (CY)^{k+ k'}}{T}   \sum_{\substack{ n_{i,j}, m_{i,j} \leq Y \\   \prod n_{i,j} \neq \prod m_{i,j}   } }  1  \ll  \frac{ (CY^2)^{k+ k'}}{T}.
\end{align*}
This completes the proof.
\end{proof}


\section{Bounding the discrepancy : proof of Theorem \ref{Discrepancy}}\label{sec:Discrepancy}
Let  $\u = ( u_1 , \dots, u_J)$ and similarly $\v, \xb,$ and $\yb$ be vectors in $\mathbb R^{J}$, and define
$$ \Phhat ( \xb , \yb) := \int_{ \mathbb{R}^{2J}} e^{ 2 \pi i  (\xb \cdot \ub + \yb \cdot \vb) } d\Phi_T ( \u, \v),$$
and
$$ \Phrhat (\xb , \yb ) := \int_{ \mathbb{R}^{2J} }  e^{ 2 \pi i  (\xb \cdot \ub + \yb \cdot \vb) } d\Phrand ( \u, \v),
$$
where $\xb \cdot \ub  = \sum_{j=1}^J x_j u_j $ is the dot product. Then by the definitions  of $\Phi_T  $ and $\Phrand$ in \eqref{PhiT def} and \eqref{Phi rand def},  we may write  
\begin{equation}\notag
\begin{split}
\Phhat ( \xb, \yb) = \frac1T \int_T^{2T} 
\exp \Bigg[ 2\pi i & \sum_{j=1}^J \big(    x_j \log | L_j   ( \sigma  +it)  | +  y_j \arg  L_j  ( \sigma + it)   \big) \Bigg] dt
\end{split}\end{equation}
and
\begin{equation}\notag
\begin{split}
\Phrhat ( \xb, \yb) = \E \Bigg(   \exp \Bigg[ 2 \pi  i & \sum_{j=1}^J \big(  x_j \log |   L_j   ( \sigma, \X )|   +   y_j \arg L_j ( \sigma, \X )   \big) \Bigg] \Bigg).
\end{split}\end{equation}

\begin{pro}\label{characteristic}
Let $T$ be large and $\sigma=1/2+ 1/G(T)$ where $2<G(T)\leq \frac{\sqrt{\log T}}{\log\log T }$. Let $ || \xb ||_{\infty} := \sup_{1 \leq j \leq J} |x_j | $. Then, for any constant $A>0$ there exists a constant $c_1>0$ such that for all $ \xb$ and $ \yb $ with $ ||\xb ||_{\infty}, ||\yb||_{\infty} \leq c_1\frac{ \sqrt{\log T}}{ \sqrt{G(T)}\log\log T}$, we have
$$
 \Phhat ( \xb , \yb ) = \Phrhat (\xb ,\yb ) + O \left(\frac{1}{(\log T)^A}\right).
$$
\end{pro}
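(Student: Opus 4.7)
The plan is to compare the two characteristic functions by first replacing $\log L_j$ on both sides by a short Dirichlet polynomial, then Taylor expanding the resulting exponentials to a carefully chosen order, and finally matching each low-order moment using Lemma \ref{lem:dir poly mmt 3} and bounding the Taylor tail using Lemma \ref{lem:dir poly mmt 2}.

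First I would set $Y = \exp(A'G(T)\log\log T)$ for a constant $A' = A'(A)$ to be chosen large, and write $z_j = x_j - iy_j$, $S(w) = \sum_{j\le J} z_j R_{L_j, Y}(w)$. By Lemma \ref{lem : Dir Poly approx} applied to each $L_j$, outside an exceptional set $E\subset [T,2T]$ of measure $\ll T\exp(-c\log T/G(T))$ we have $\log L_j(\sigma+it) = R_{L_j,Y}(\sigma+it) + O((\log T)^{-A'/2+2})$ uniformly in $j$. On the random side, Lemma \ref{ApproxDirichletRandom} (with a union bound in $j$) gives $|\log L_j(\sigma,\X) - R_{L_j,Y}(\sigma,\X)|\le (\log T)^{-A'/2+2}$ with probability $\ge 1 - (\log T)^{-A' + O(1)}$. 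Since $|e^{iu}-e^{iv}|\le |u-v|$ and $\|\xb\|_\infty, \|\yb\|_\infty \le c_1\sqrt{\log T}/(\sqrt{G(T)}\log\log T)$, both $\Phhat(\xb,\yb)$ and $\Phrhat(\xb,\yb)$ are equal, up to $O((\log T)^{-A})$, to the corresponding integral and expectation of $\exp(2\pi i \re S)$ provided $A'\ge 2A + C$ for an absolute constant $C$.

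Next, fix an even $K_0 = 2k_0$ and Taylor expand $\exp(iF) = \sum_{K < K_0}(iF)^K/K! + \mathrm{Rem}_{K_0}$, where $F = 2\pi \re S$. Expanding $F^K = \pi^K(S+\bar S)^K$ multinomially produces a linear combination of monomials $\prod_j R_{L_j,Y}^{k_j}\overline{R_{L_j,Y}}^{k_j'}$ with $\sum(k_j+k_j')=K$ and total coefficient norm $\le(4\pi J\|\zb\|_\infty)^K$. Applying Lemma \ref{lem:dir poly mmt 3} to each such monomial (with $C=dY^\theta$ from $|\beta_{L_j}(p^m)|\le d p^{m\theta}/m$) gives
\[
\Bigl|\tfrac1T\int_T^{2T} F^K\,dt - \ex F(\X)^K\Bigr|\ll B^K/T, \qquad B := C_0 J d\,\|\zb\|_\infty Y^{2+\theta},
\]
for an absolute constant $C_0$. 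Summing $\sum_{K<K_0}B^K/(K!T)$, the matching contribution is dominated by the last term $\lesssim(eB/K_0)^{K_0}/T$. For the Taylor tails we use $|F|^{K_0}\le (2\pi J\|\zb\|_\infty)^{K_0}J^{K_0-1}\sum_j|R_{L_j,Y}|^{K_0}$ and Lemma \ref{lem:dir poly mmt 2} to bound
\[
\tfrac1T\int \tfrac{|F|^{K_0}}{K_0!}\,dt,\ \ex\tfrac{|F(\X)|^{K_0}}{K_0!}\ \ll\ \Bigl(\tfrac{C_1\,\|\zb\|_\infty^2\log\log Y}{k_0}\Bigr)^{k_0},
\]
using $k_0!/(2k_0)!\le k_0^{-k_0}$.

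Finally I would pick $k_0 = \max\bigl(\lceil eC_1 \|\zb\|_\infty^2\log\log Y\rceil,\ A\log\log T\bigr)$, which makes both Taylor tails $\le e^{-k_0}\le(\log T)^{-A}$. Using $\log\log Y \asymp \log G(T)\le\log\log T$ and the hypothesis on $\|\zb\|_\infty$, one has $k_0 \ll c_1^2\log T/(G(T)\log\log T) + A\log\log T$, while $\log B \asymp (2+\theta)A' G(T)\log\log T$, so the leading matching term satisfies $K_0\log(eB/K_0)\le 2(2+\theta)A' c_1^2 \log T + O(k_0\log\log T)$. Choosing $c_1$ small enough (depending on $J,d,\theta,A$, and $A'$) so that $2(2+\theta)A'c_1^2<1/2$, say, forces $(eB/K_0)^{K_0}/T\ll T^{-1/2}$, hence well below $(\log T)^{-A}$. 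The main obstacle is precisely this joint calibration of $A'$ (controlling the approximation accuracy), $K_0$ (Taylor truncation order), and $c_1$ (range of $\xb,\yb$): both constraints $K_0 > \text{const}\cdot\|\zb\|_\infty^2\log\log Y$ (for the tail) and $K_0\log B < (1-\varepsilon)\log T$ (for the matching error) must be simultaneously satisfied, and this is possible only because $\log T/(G(T)\log\log T)\to\infty$ in the allowed range $G(T)\le\sqrt{\log T}/\log\log T$, which gives enough slack to separate the two regimes.
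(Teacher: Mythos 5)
Your proposal is correct and takes essentially the same route as the paper: truncate $\log L_j$ to the Dirichlet polynomial $R_{L_j,Y}$ via Lemma~\ref{lem : Dir Poly approx} (resp.\ Lemma~\ref{ApproxDirichletRandom}), Taylor-expand the exponential to an order $\asymp \log T/(G(T)\log\log T)$, bound the tail with Lemma~\ref{lem:dir poly mmt 2}, and match the surviving moments via Lemma~\ref{lem:dir poly mmt 3}. The only cosmetic differences from the paper are the sign convention $z_j=x_j-iy_j$ and the overgenerous coefficient bound $C=dY^\theta$ (one can take $C=d$ since $|\beta_{L_j}(p^n)/p^{n\sigma}|\le d$), neither of which affects the argument.
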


\begin{proof}
Let $Y=\exp(BG(T)\log\log T)$, where $B=2A+6$. Then for every $j\leq J$, it follows from Lemma \ref{lem : Dir Poly approx} that 
\be\label{TruncLj}
\log L_j(\sigma+it)=R_{L_j, Y}(\sigma+it) + O\left(\frac{1}{(\log T)^{A+1}}\right)
\ee
for all $t\in [T, 2T]$ except for a set of measure $\ll T \exp(-c_1 \log T/ G(T))$ for some constant $c_1>0$.  
Let $\mathcal{A}(T)$ be the set of points $t \in [T, 2T]$ for which \eqref{TruncLj} holds for all $j\le J$. Then 
$$\text{meas}(\mathcal{A}(T))\ll T \exp(-c_1 \log T/ G(T))\ll  T \exp(-\sqrt{\log T}) . $$
Hence by \eqref{TruncLj},  $ \Phhat(\xb, \yb)$ equals
\begin{align*}
& \frac1T \int_{\mathcal{A}(T)} \exp\left( 2 \pi i\left(\sum_{j=1}^J (x_j \re \log L_j(\sigma+it)+ y_j \im \log L_j(\sigma+it))\right)\right)dt +O\left(e^{-\sqrt{\log T}}  \right)\\
&  =\frac1T \int_T^{2T}\exp\left( 2 \pi   i \left(\sum_{j=1}^J (x_j \re  R_{L_j, Y}(\sigma+it)+ y_j \im R_{L_j, Y}(\sigma+it))\right)\right)dt +O\left(\frac{1}{(\log T)^A}\right).
\end{align*}
 Let $N=[\log T/(10BG(T) \log\log T)]$.  Then, it follows from the previous estimate that $\Phhat(\xb,  \yb )$ equals
\begin{equation}\label{TaylorChar}
\sum_{n=0}^{2N-1} \frac{(2 \pi i)^n}{n!}   \frac1T \int_T^{2T}\bigg(\sum_{j=1}^J (x_j \re R_{L_j, Y}(\sigma+it)+ y_j \im R_{L_j, Y}(\sigma+it))\bigg)^ndt + E_1,
\end{equation}
where 
\begin{equation}\label{ErrorTaylorChar}
\begin{aligned}
E_1 
& \ll  \frac{1}{(\log T)^A}+ \frac{ ( 2 \pi)^{2N} (||\xb||_{\infty}+||\yb||_{\infty} )^{2N}}{(2N)!}\frac1T \int_T^{2T}  \bigg(\sum_{j=1}^J|R_{L_j, Y}(\sigma+it)|\bigg)^{2N}dt\\
& \ll \frac{1}{(\log T)^A}+ \frac{N!}{(2N)!} (C_2 \log \log Y)^N  \frac{ (c_1^2  \log T)^N}{ G(T)^N ( \log \log T)^{2N}}  \ll \frac{1}{(\log T)^A},
\end{aligned}
\end{equation}
by Lemma \ref{lem:dir poly mmt 2}, Minkowski's inequality and Stirling's formula, where $c_1 $  and $ C_2$ are  positive constants.
 
Next, we handle the main term of \eqref{TaylorChar}. To this end, we use Lemma \ref{lem:dir poly mmt 3}, which implies that for all non-negative integers $k_1, k_2, \dots, k_{2J}$ such that $k_1+\cdots  + k_{2J} \leq 2N$ we have
\begin{align*}
& \frac1T \int_T^{2T} \prod_{j=1}^J(R_{L_j, Y}(\sigma+it))^{k_j}\prod_{\ell=1}^J  (\overline{R_{L_j, Y}(\sigma+it)})^{k_{J+\ell}}dt
\\
& = \ex\left(\prod_{j=1}^J(R_{L_j, Y}(\sigma, \X))^{k_j}\prod_{\ell=1}^J  (\overline{R_{L_j, Y}(\sigma, \X)})^{k_{J+\ell}}\right) + O(T^{-1/2}).
\end{align*}
Let $z_j = x_j  + i y_j $ and $\bar{z}_j$ be its complex conjugate.  Then it follows from this estimate that for all $0\leq n\leq 2N$ we have
\begin{align*}
&\frac1T \int_T^{2T}\bigg(\sum_{j=1}^J (x_j \re R_{L_j, Y}(\sigma+it)+y_j \im R_{L_j, Y}(\sigma+it))\bigg)^ndt\\
&=\frac{1}{2^n}  \frac1T \int_T^{2T}\bigg(\sum_{j=1}^J (\bar{z}_j  R_{L_j, Y}(\sigma+it)+ z_j  \overline{  R_{L_j, Y}(\sigma+it)) }  \bigg)^ndt\\
&= \frac{1}{2^n} \sum_{\substack{k_1,\dots, k_{2J}\geq 0\\ k_1+\cdots+k_{2J}=n}}{n\choose k_1,  \dots, k_{2J}}
\frac1T \int_T^{2T} \prod_{j=1}^J(\bar{z}_j R_{L_j, Y}(\sigma+it))^{k_j}\prod_{\ell=1}^J  (z_{\ell}\overline{R_{L_\ell , Y}(\sigma+it)})^{k_{J+\ell}}dt\\
&= \frac{1}{2^n} \sum_{\substack{k_1,\dots, k_{2J}\geq 0\\ k_1+\cdots+k_{2J}=n}}{n\choose k_1, k_2, \dots, k_{2J}}
 \ex\left(\prod_{j=1}^J( \bar{z}_j R_{L_j, Y}(\sigma, \X))^{k_j}\prod_{\ell=1}^J  (z_{\ell}\overline{R_{L_j, Y}(\sigma, \X)})^{k_{J+\ell}}\right) \\
& \quad \quad +O\bigg(T^{-1/2} \bigg( \sum_{j=1}^J |z_j| \bigg)^n\bigg)\\
&= \ex\bigg[ \bigg(\sum_{j=1}^J (x_j \re R_{L_j, Y}(\sigma, \X)+ y_j \im R_{L_j, Y}(\sigma, \X))\bigg)^n \bigg] +O\bigg(T^{-1/2}   J^n  (||\xb||_{\infty}+||\yb||_{\infty})^n\bigg).
\end{align*}
Inserting this estimate in \eqref{TaylorChar}, we derive that $\Phhat(\xb, \yb)$ equals 
\begin{equation}\label{EstCharRand1}
\begin{aligned}
& \sum_{n=0}^{2N-1} \frac{(2 \pi i)^n}{n!}\ex\bigg(\bigg(\sum_{j=1}^J (x_j \re R_{L_j, Y}(\sigma, \X)+ y_j \im R_{L_j, Y}(\sigma, \X))\bigg)^n\bigg) + O((\log T)^{-A}) \\
& =\ex\bigg(\exp\bigg(2 \pi  i \sum_{j=1}^J (x_j \re  R_{L_j, Y}(\sigma, \X)+ y_j \im R_{L_j, Y}(\sigma, \X)) \bigg) \bigg) +  O((\log T)^{-A})  , 
\end{aligned}
\end{equation}
where the last estimate follows by Lemma \ref{lem:dir poly mmt 2} and the same argument as in \eqref{ErrorTaylorChar}. 

Let $\varepsilon>0$ be a parameter to be chosen, and define $\B_{\varepsilon}$ to be the event 
$$ \left| \log L_j (\sigma, \X)- R_{L_j, Y}(\sigma, \X)\right| < \varepsilon,$$ for all $j\leq J$. Let $\B_{\varepsilon}^{c}$ be the complement of $\B_{\varepsilon}$.  Then it follows from  Lemma \ref{ApproxDirichletRandom} that 
$$  \pr(\B_{\varepsilon}^{c}) \ll \frac{1}{\varepsilon^2 (\log T)^{2B}  }.$$
Let $\mathbf{1}_{\B_{\varepsilon}}$ be the indicator function of the event $\B_{\varepsilon}$. Then it follows from this estimate that $ \Phrhat (\xb ,\yb ) $ equals
\begin{align*}
  & \ex\bigg( \mathbf{1}_{\B_{\varepsilon}}\cdot \exp\bigg( 2 \pi   i  \sum_{j=1}^J (x_j \re  \log L_j(\sigma, \X)+ y_j \im \log L_j(\sigma, \X)) \bigg) \bigg) + O\bigg(\frac{1}{\varepsilon^2 (\log T)^{2B} } \bigg)\\
 &=  \ex\bigg(\mathbf{1}_{\B_{\varepsilon}}\cdot \exp\bigg( 2 \pi   i  \sum_{j=1}^J (x_j \re  R_{L_j, Y}(\sigma, \X)+ y_j  \im R_{L_j, Y}(\sigma, \X)) \bigg) \bigg) + O\bigg(\varepsilon +  \frac{1}{ \varepsilon^2 (\log T)^{2B}  } \bigg)\\
 & = \ex\bigg(\exp\bigg(2 \pi i \sum_{j=1}^J (x_j \re  R_{L_j, Y}(\sigma, \X)+ y_j \im R_{L_j, Y}(\sigma, \X)) \bigg) \bigg) +  O\bigg(\varepsilon +  \frac{1}{ \varepsilon^2 (\log T)^{2B}  } \bigg).
 \end{align*}
Choosing $\varepsilon= (\log T)^{-2B/3}$ and inserting this estimate in \eqref{EstCharRand1} completes the proof. 
\end{proof}


The deduction of Theorem \ref{Discrepancy} from Proposition \ref{characteristic} uses Beurling-Selberg functions.
For $z\in \mathbb C$ let
\[
H(z) =\bigg( \frac{\sin \pi z}{\pi} \bigg)^2 \bigg( \sum_{n=-\infty}^{\infty} \frac{\textup{sgn}(n)}{(z-n)^2}+\frac{2}{z}\bigg)
\qquad\mbox{and} \qquad K(z)=\Big(\frac{\sin \pi z}{\pi z}\Big)^2.
\]
Beurling proved that the function $B^+(x)=H(x)+K(x)$
majorizes $\textup{sgn}(x)$ and its Fourier transform
has restricted support in $(-1,1)$. Similarly, the function $B^-(x)=H(x)-K(x)$ minorizes $\textup{sgn}(x)$ and its Fourier
transform has the same property (see  \cite[Lemma 5]{Vaaler}).

Let $\Delta>0$ and $a,b$ be real numbers with $a<b$. Take $\mathcal I=[a,b]$
and
define
\[
F_{\mathcal{I}, \Delta} (z)=\frac12 \Big(B^-(\Delta(z-a))+B^-(\Delta(b-z))\Big).
\]
Then we have the following lemma, which is proved in \cite{LLR2} (see  Lemma 7.1  therein and the discussion above it).

\begin{lem} \label{lem:functionbd}
The function $F_{\mathcal{I}, \Delta}$ satisfies the following properties
\begin{itemize}
\item[1.]
For all $x\in \mathbb{R}$ we have $
|F_{\mathcal{I}, \Delta}(x)| \le 1
$ and  
\begin{equation} \label{l1 bd}
0 \le \mathbf 1_{\mathcal I}(x)- F_{\mathcal{I}, \Delta}(x)\le K(\Delta(x-a))+K(\Delta(b-x)).
\end{equation}
\item[2.]
 The Fourier transform of $F_{\mathcal{I}, \Delta}$ is
\begin{equation} \label{Fourier}
\widehat F_{\mathcal{I}, \Delta}(y)=
\begin{cases}\widehat{ \mathbf 1}_{\mathcal I}(y)+O\Big(\frac{1}{\Delta} \Big) \mbox{ if } |y| < \Delta, \\
0 \mbox{ if } |y|\ge \Delta.
\end{cases}
\end{equation}
\end{itemize}
\end{lem}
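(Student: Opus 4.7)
The plan is to deduce both properties directly from the classical Beurling--Selberg majorant/minorant bounds on $B^{\pm}$ recorded in Vaaler~\cite{Vaaler} (and reproduced in the authors' earlier work \cite{LLR2}). The three inputs I would use are: (i) $B^-(x) \le \textup{sgn}(x) \le B^+(x)$ for every $x\in\mathbb{R}$; (ii) the identity $B^+(x)-B^-(x)=2K(x)$, which is immediate from the definitions $B^{\pm}=H\pm K$; and (iii) the fact that $\widehat{B^{\pm}}$ is supported in $[-1,1]$, together with the same property for $\widehat{K}$, which is the triangle function. Note the almost-everywhere identity $\mathbf{1}_{[a,b]}(x)=\tfrac12\bigl(\textup{sgn}(x-a)+\textup{sgn}(b-x)\bigr)$ will be applied repeatedly.

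For part~1, I would apply (i) to the arguments $\Delta(x-a)$ and $\Delta(b-x)$ and average; this immediately gives $F_{\mathcal{I},\Delta}(x)\le \mathbf{1}_{\mathcal{I}}(x)$, i.e.\ the nonnegativity in \eqref{l1 bd}. For the upper bound of the gap, I would instead subtract the $B^-$ values from the corresponding $\textup{sgn}$ values and use (ii) to estimate
\begin{equation*}
\textup{sgn}(\Delta(x-a))-B^-(\Delta(x-a)) \le B^+(\Delta(x-a))-B^-(\Delta(x-a))=2K(\Delta(x-a)),
\end{equation*}
and similarly for $b-x$. Averaging yields exactly the right-hand side of \eqref{l1 bd}. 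Finally, the uniform bound $|F_{\mathcal{I},\Delta}|\le1$ follows because on the one hand $F_{\mathcal{I},\Delta}\le \mathbf{1}_{\mathcal{I}}\le 1$, and on the other hand $F_{\mathcal{I},\Delta}\ge \mathbf{1}_{\mathcal{I}}-K(\Delta(x-a))-K(\Delta(b-x))\ge -1$ since $0\le K\le 1$.

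For part~2, the support statement is pure Fourier dilation: if $\widehat{B^-}$ vanishes outside $[-1,1]$, then after the change of variable $\Delta(x-a)\mapsto \cdot$, the Fourier transform of $x\mapsto B^-(\Delta(x-a))$ is supported in $[-\Delta,\Delta]$, and the same holds for its reflection $B^-(\Delta(b-x))$ and hence for $F_{\mathcal{I},\Delta}$. Thus $\widehat{F_{\mathcal{I},\Delta}}(y)=0$ for $|y|\ge \Delta$. For $|y|<\Delta$, I would combine the bound \eqref{l1 bd} with the trivial identity $\int_{\mathbb{R}} K(\Delta x)\,dx=1/\Delta$ (from the known $L^1$ norm of the Fej\'er-type kernel $K$) to obtain
\begin{equation*}
\bigl|\widehat{\mathbf{1}_{\mathcal{I}}}(y)-\widehat{F_{\mathcal{I},\Delta}}(y)\bigr| \le \int_{\mathbb{R}}\bigl(\mathbf{1}_{\mathcal{I}}(x)-F_{\mathcal{I},\Delta}(x)\bigr)\,dx \le \frac{2}{\Delta},
\end{equation*}
which gives the advertised $O(1/\Delta)$ error.

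There is no serious obstacle: everything reduces to carefully tracking the affine substitutions $x\mapsto \Delta(x-a)$ and $x\mapsto \Delta(b-x)$ through the inequalities of Vaaler, and using that $K$ has unit mass $1/\Delta$ after rescaling. Since this is precisely the content of Lemma~7.1 of \cite{LLR2}, the proof here would amount to extracting the statement in the form needed and could, if desired, simply be cited.
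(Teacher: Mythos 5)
The paper does not prove this lemma; it simply cites Lemma~7.1 of \cite{LLR2}, so your closing remark that the statement ``could, if desired, simply be cited'' is exactly what the authors do. Your reconstruction of the sandwich inequality \eqref{l1 bd} and of the Fourier transform properties \eqref{Fourier} is sound: the minorant/majorant inequalities for $B^{\pm}$, the identity $B^+-B^-=2K$, the dilation argument for the support of $\widehat{F}_{\mathcal{I},\Delta}$, and the estimate $\int_{\mathbb{R}}\bigl(\mathbf 1_{\mathcal I}-F_{\mathcal{I},\Delta}\bigr)\le 2/\Delta$ are all correct.

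However, your argument for the uniform bound $|F_{\mathcal{I},\Delta}|\le 1$ contains a genuine gap. You claim
$F_{\mathcal{I},\Delta}(x)\ge \mathbf 1_{\mathcal I}(x)-K(\Delta(x-a))-K(\Delta(b-x))\ge -1$ ``since $0\le K\le 1$,'' but the last inequality is false in general: for $x\notin\mathcal I$ one has $\mathbf 1_{\mathcal I}(x)=0$, and when $\Delta(b-a)$ is small the two arguments $\Delta(x-a)$ and $\Delta(b-x)$ can both lie near the peak of $K$, making $K(\Delta(x-a))+K(\Delta(b-x))$ close to $2$ rather than at most $1$. Concretely, with $a=0$, $b=10^{-2}/\Delta$, $x=-10^{-2}/\Delta$, the right-hand side is close to $-2$. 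Thus this chain of inequalities only gives $F_{\mathcal{I},\Delta}\ge -2$, not $F_{\mathcal{I},\Delta}\ge -1$. (Note also that $B^-$ itself dips strictly below $-1$, e.g.\ $B^-(-1/2)=-12/\pi^2<-1$, so a crude pointwise bound on a single $B^-$ cannot give the lower bound either; the two terms in $F_{\mathcal{I},\Delta}$ must be played off against each other.) To close this you would either need the sharper inequality on which \cite{LLR2} implicitly relies, or simply cite \cite{LLR2} Lemma~7.1 as the paper does.
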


\begin{proof}[Proof of Theorem \ref{Discrepancy}]
First, it follows from Lemma \ref{LargeDevLj} that with $\tau=(\log\log T)^2$ we have 
\begin{align*}
& \frac1T\text{meas}\big\{ t \in [T, 2T] : \Lv( \sigma + it) \notin  [ -(\log\log T)^2, (\log\log T)^2]^{2J}  \big\} \ll \frac{1}{(\log T)^{10}},
\end{align*}
and  
$$ \P \big(\Lv ( \sigma , \X) \notin [ -(\log\log T)^2, (\log\log T)^2]^{2J}   \big) \ll \frac{1}{(\log T)^{10}}.$$
Therefore, it suffices to consider rectangular regions $R \subset [ -(\log\log T)^2, (\log\log T)^2]^{2J}$.

Let $A=J+3$ and $c_1$ be the corresponding constant in Proposition \ref{characteristic}. Let 
$$\Delta:=c_1\frac{\sqrt{\log T}}{\sqrt{G(T)}\log\log T},
$$ and $ R=\prod_{j=1}^{J}[a_j,b_j] \times \prod_{j=1}^{J}[c_j,d_j]$ for $j=1,\ldots, J$, with
$0< b_j-a_j , d_j - c_j \le 2(\log\log T)^2$. 
We also write $\mathcal I_j=[a_j,b_j]$ and $\mathcal J_j=[c_j,d_j]$. 
By Fourier inversion, \eqref{Fourier}, and Proposition \ref{characteristic} we have  
\begin{equation} \label{long est}
\begin{split}
&\frac1T \int_T^{2T} \prod_{j=1}^J F_{\mathcal I_j, \Delta} \Big(  \log |L_j(\sigma+it)|\Big) 
F_{\mathcal J_j, \Delta}\Big(\arg L_j(\sigma+it)\Big) \, dt\\
&
=\int_{\mathbb R^{2J}} \bigg(\prod_{j=1}^J \widehat{F}_{\mathcal I_j, \Delta} (x_j) 
\widehat{F}_{\mathcal J_j, \Delta}( y_j)\bigg)  \Phhat(-\xb , - \yb) \, d\xb \, d\yb  \\
&
= \int\limits_{\substack{|x_j|,|y_j| \le \Delta \\ j=1,2, \ldots, J}} \bigg(\prod_{j=1}^J \widehat{F}_{\mathcal I_j, \Delta} (x_j) 
\widehat{F}_{\mathcal J_j, \Delta}( y_j)\bigg)  \Phrhat ( - \xb , -  \yb ) \, d\xb  \, d\yb    + E_2 \\
& 
=\mathbb E \bigg( \prod_{j=1}^J F_{\mathcal I_j, \Delta} \Big(   \log |L_j(\sigma,\X)|\Big) 
F_{\mathcal J_j, \Delta}\Big( \arg L_j(\sigma,\X)\Big) \bigg)+O\left(\frac{1}{(\log T)^2}\right),
\end{split}
\end{equation}
where $E_2 = O\left(\Delta^{2J}\frac{(  \log\log T)^{4J}}{(\log T)^A}\right)$.

Next note that $\widehat K(\xi)=\max(0,1-|\xi|)$. Applying Fourier inversion, Proposition \ref{characteristic} with $J=1$,
and Lemma \ref{lemma abs cont}  we obtain
\begin{equation} \notag
\begin{split}
& \frac1T \int_T^{2T} K\Big( \Delta  ( \log | L_1(\sigma+it) | -\alpha )\Big) \, dt
=\frac{1}{\Delta}\int_{-\Delta}^{\Delta}\Big(1-\frac{|\xi|}{\Delta}\Big) e^{-2\pi i \alpha \xi} \Phhat (\xi,0, \dots, 0) \, d\xi \ll  \frac{1}{\Delta},
\end{split}
\end{equation}
where $\alpha$ is an arbitrary real number. By this and \eqref{l1 bd} we have that
\begin{equation} \label{K bd}
\frac1T \int_T^{2T}  F_{\mathcal I_1, \Delta}\Big(\textup{Re} \log L_1(\sigma+it)\Big) \, dt
=\frac1T \int_{T}^{2T} \mathbf 1_{\mathcal I_1}\Big(\textup{Re} \log L_1(\sigma+it)\Big) dt+O(1/\Delta).
\end{equation}
Lemma \ref{lem:functionbd} implies that $|F_{\mathcal I_j, \Delta}(x)|, |F_{\mathcal J_j, \Delta}(x)| \le 1$ for  $j=1,\ldots, J$. Hence, by this and \eqref{K bd} we have
\begin{align*}
&\frac1T \int_T^{2T} \prod_{j=1}^J F_{\mathcal I_j, \Delta} \Big( \textup{Re}\log  L_j(\sigma+it)\Big) 
F_{\mathcal J_j, \Delta}\Big( \arg L_j(\sigma+it)\Big) \, dt \\
&=\frac1T \int_T^{2T} \mathbf 1_{\mathcal I_1} \Big( \textup{Re} \log L_1(\sigma+it)\Big) 
F_{\mathcal J_1, \Delta}\Big( \arg L_1(\sigma+it)\Big) \\
&\qquad \qquad   \times \prod_{j=2}^J F_{\mathcal I_j, \Delta} \Big( \textup{Re} \log L_j(\sigma+it)\Big) 
F_{\mathcal J_j, \Delta}\Big( \arg L_j(\sigma+it)\Big) \, dt+O(1/\Delta).
\end{align*}
By using the same argument, one can prove analogs of \eqref{K bd} for  
$\textup{Re} \log L_j(\sigma+it)$ with $2\leq j\leq J$ and $\arg L_j(\sigma+it)$ with $1\leq j\leq J$. We then derive
\begin{equation} \label{one}
\begin{split}
&\frac1T \int_T^{2T} \prod_{j=1}^J F_{\mathcal I_j, \Delta} \Big( \textup{Re} \log L_j(\sigma+it)\Big) 
F_{\mathcal J_j, \Delta}\Big( \arg L_j(\sigma+it)\Big) \, dt \\
= &\frac1T \int_T^{2T} \prod_{j=1}^J \mathbf 1_{\mathcal I_j, \Delta} \Big( \textup{Re} \log L_j(\sigma+it)\Big) 
\mathbf 1_{\mathcal J_j, \Delta}\Big( \arg L_j(\sigma+it)\Big) \, dt +O_J\left(\frac{1}{\Delta}\right) \\
=&   \Phi_T(R)+O_J\left(\frac{1}{\Delta}\right).
\end{split}
\end{equation}
A similar argument shows that 
\begin{equation} \label{two}
\begin{aligned}
\mathbb E \bigg( \prod_{j=1}^J F_{\mathcal I_j, \Delta} \Big( \textup{Re} \log L_j(\sigma,\X)\Big) 
F_{\mathcal J_j, \Delta}\Big( \arg L_j (\sigma,\X)\Big) \bigg) = \Phrand(R) +O_J\left(\frac{1}{\Delta}\right).
\end{aligned}
\end{equation}
Inserting the estimates \eqref{one} and \eqref{two} in \eqref{long est} completes the proof.

\end{proof}


\section{$L^{2k}$ norm of $\log |\sum_{j=1}^J b_j L_j(\sigma+it)|$ : Proof of Proposition \ref{MomentsLogF} }\label{sec:MomentsLogF}

To prove Proposition \ref{MomentsLogF} we follow the same strategy as in the proof of Proposition 2.5 of \cite{LLR} for the Riemann zeta function, but we encounter new technical difficulties which we shall describe later.

We first start with the following classical lemma, which is a generalization of a lemma
of Landau (see Lemma $\alpha$ in \cite[Section 3.9]{Ti}). 
\begin{lem} [Lemma 5.1 of \cite{LLR}]\label{Landau}
Let $0< r \ll 1$. Also, let $s_0
=\sigma_0+it$ and suppose $f(z)$ is 
analytic in $|z-s_0| \leq r$. Define
\[
M_{r}(s_0)=\max_{|z-s_0| \leq r} 
\bigg|\frac{f(z)}{f(s_0)} \bigg| +3 \qquad
\mbox{ and }  \qquad N_{ r}(s_0)=\sum_{|\varrho-s_0|
\leq r} 1,
\]
where the last sum runs over the zeros $\varrho$ of $f(z)$
in the closed disk of radius $r$ centered at $s_0$.
Then for $0<\delta<r/2$ and $|z-s_0| \leq r-2\delta$
we have
\[
\frac{f'}{f}(z)=\sum_{|\rho-s_0| \leq r-\delta}
\frac{1}{s-\rho} +O\bigg( \frac{1}{\delta^2}
\Big(\log M_{r}(s_0) +
N_{r-\delta}(s_0) (\log 1/\delta+1) \Big)\bigg).
\]
\end{lem}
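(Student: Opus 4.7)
The plan is to reduce the estimate to one for $g'/g$ where $g$ is an auxiliary function obtained from $f$ by dividing out its zeros inside a slightly smaller disk via Blaschke-type factors. Set $R:=r-\delta$ and, for each zero $\rho$ of $f$ with $|\rho-s_0|\le R$ (counted with multiplicity), introduce
$$
B_\rho(z):=\frac{R(z-\rho)}{R^2-\overline{(\rho-s_0)}(z-s_0)}.
$$
This factor has a simple zero at $\rho$, its pole sits at the reflection of $\rho$ in the circle $|z-s_0|=R$ (hence outside the closed disk), and a direct computation shows $|B_\rho|\equiv 1$ on $|z-s_0|=R$. Consequently $g(z):=f(z)\big/\prod_\rho B_\rho(z)$ is analytic and non-vanishing in $|z-s_0|\le R$, and a single-valued branch of $\log g$ exists there.

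Logarithmic differentiation yields the decomposition
$$
\frac{f'}{f}(z)=\frac{g'}{g}(z)+\sum_{|\rho-s_0|\le R}\frac{1}{z-\rho}+\sum_{|\rho-s_0|\le R}\frac{\overline{\rho-s_0}}{R^2-\overline{(\rho-s_0)}(z-s_0)}.
$$
The last sum is the easy piece: for $|z-s_0|\le r-2\delta=R-\delta$ one has $|R^2-\overline{(\rho-s_0)}(z-s_0)|\ge R\delta$, so each summand is $O(1/\delta)$ and the total is $O(N_{r-\delta}(s_0)/\delta)$. Since $0<\delta<r/2$ gives $1/\delta\le (\log(1/\delta)+1)/\delta^2$, this is absorbed into the stated error.

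The main step is bounding $g'/g$ by Borel--Carath\'eodory. On $|z-s_0|=R$ the Blaschke normalization gives $|g(z)|=|f(z)|\le M_r(s_0)|f(s_0)|$, while at the center $|g(s_0)|=|f(s_0)|\prod_\rho R/|\rho-s_0|\ge |f(s_0)|$ because every $|\rho-s_0|\le R$. Hence the function $h(z):=\log g(z)-\log g(s_0)$ satisfies $h(s_0)=0$ and $\mathrm{Re}\,h(z)\le \log M_r(s_0)$ on $|z-s_0|=R$. Borel--Carath\'eodory then yields $|h(z)|\ll \log M_r(s_0)$ on $|z-s_0|\le R-\delta/2$ (using $R=O(1)$), and a Cauchy estimate on disks of radius $\delta/2$ gives $|g'/g(z)|=|h'(z)|\ll \log M_r(s_0)/\delta^2$ for $|z-s_0|\le r-2\delta$. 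Assembling the three pieces of the decomposition yields the claim.

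The main technical point is simply the bookkeeping: verifying the Blaschke identities $|B_\rho|\equiv 1$ on the circle, checking that $\log g$ admits a single-valued analytic branch on the simply connected non-vanishing domain, and tracking the precise powers of $\delta$ through the composition of Borel--Carath\'eodory and Cauchy. The argument is the classical refinement of Landau's lemma in Titchmarsh \cite{Ti}, and could equivalently be obtained in one shot by differentiating the Poisson--Jensen representation of $\log|f|$ with respect to $z$.
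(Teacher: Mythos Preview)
The paper does not actually prove this lemma; it is quoted verbatim as Lemma~5.1 of \cite{LLR} and used as a black box. Your proposal supplies the standard classical proof (Blaschke factors to remove the zeros, then Borel--Carath\'eodory plus a Cauchy estimate on the zero-free quotient), which is exactly the approach of \cite{LLR} and of the Landau--Titchmarsh tradition it descends from.

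One small slip to fix: your intermediate claim that Borel--Carath\'eodory yields $|h(z)|\ll \log M_r(s_0)$ on $|z-s_0|\le R-\delta/2$ is off by a factor of $1/\delta$. The Borel--Carath\'eodory bound on a disk of radius $R$, evaluated at radius $R-\delta/2$, carries the factor $\dfrac{2(R-\delta/2)}{\delta/2}\asymp \dfrac{R}{\delta}$, so in fact $|h(z)|\ll \dfrac{1}{\delta}\log M_r(s_0)$ (here $R=O(1)$ kills the $R$ but not the $\delta$). This does not affect the final conclusion, since the subsequent Cauchy estimate on disks of radius $\delta/2$ then gives $|g'/g|\ll \dfrac{1}{\delta}\cdot\dfrac{1}{\delta}\log M_r(s_0)=\dfrac{1}{\delta^2}\log M_r(s_0)$, which is the required bound.
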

 
Recall that $L_j(s)$ has a Dirichlet series representation 
$$ L_j(s) =\sum_{n=1}^{\infty} \frac{\alpha_{L_j}(n)}{n^s}, $$
for $\re(s)>1$. 
We shall apply Lemma \ref{Landau} to the following function
\begin{equation}\label{ModifiedLC}
 f(z) =  \frac{n_0^z }{\sum_{j \leq J} b_j \alpha_{L_j}(n_0)} \sum_{j=1}^J  b_j L_j ( z ),
 \end{equation}
where $n_0$ is the smallest positive integer $n$ such that $ \sum_{j=1}^J b_j \alpha_{L_j}(n)  \neq 0$.
We let $\rho$ run over the zeros of $f$. We recall that $\sigma=1/2+ 1 / G(T)$, and  
choose
\begin{equation*} \label{def parameters}
\delta:=\frac1{5G(T)},  \qquad
  r:=\sigma_0-\frac12 - \frac{1}{2G(T) }  , \qquad \text{ and } R:=r+\delta.
\end{equation*}
where $ \sigma_0$ is taken to be large (but fixed) so that $|f(\sigma_0 +it) | \geq 1/10$ and $\min_{\rho} |s_0 - \rho| \geq 1/10$ uniformly in $t$. A straightforward generalization of Lemmas 5.2 and 5.3 of \cite{LLR} leads to the following result.  To be precise, we only include the major steps of the proof. 
\begin{lem}\label{LemLLR}
  Let $\sigma , \delta, r , R,$ and $s_0= \sigma_0 + it $ be as above.  Then there exists an absolute constant $c>0$ such that for every positive integer $k$ we have
\begin{align*}
  &   \int_T^{2T} \bigg|\;  \log   \bigg| \sum_{j=1}^J  b_j L_j ( \sigma +it) \bigg| \; \bigg|^{2k}  dt  \\
  & \ll c^k (G(T)^{3/2} k+ G(T)^3\log G(T) )^{2k} \sum_{n=\lfloor T \rfloor}^{\lfloor 2T \rfloor}\bigg(\log \bigg(\bigg|\sum_{j=1}^J  b_j L_j(s_n)\bigg| +3\bigg)\bigg)^{2k},
\end{align*}
where $s_n=\sigma_n+ it_n$ for $ n >  0 $ is a point at which $|\sum_{j=1}^J  b_j L_j(s)|$ achieves its maximum value on the set $\cup_{n\leq t\leq n+1}D_R(\sigma_0 + it )$, and $D_R(z)$ is the disc of radius $R$ centered at $z$. 
\end{lem}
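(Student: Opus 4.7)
The approach is to adapt Lemmas 5.2 and 5.3 of Lamzouri--Lester--Radziwi{\l}{\l} \cite{LLR} to the normalized linear combination $f$ defined in \eqref{ModifiedLC}. Since $|f(z)| \asymp |\sum_{j\leq J} b_j L_j(z)|$ on discs of bounded radius and $\log|F(\sigma+it)|$ differs from $\log|f(\sigma+it)|$ only by absolute constants absorbable into the final bound, the task reduces to estimating the $L^{2k}$-norm of $\log|f(\sigma+it)|$ in terms of $\log(|f(s_n)|+3)$.

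First I would partition $[T,2T]$ into unit subintervals $[n,n+1]$ with integer $n$, and for each such $n$ and each $t \in [n,n+1]$ apply Lemma \ref{Landau} to $f$ at the point $s_0 = \sigma_0+it$ with the parameters $\delta, r, R$ from \eqref{def parameters}. These parameters are engineered so that $\sigma+it$ lies in the smaller disc $D_{r-2\delta}(s_0)$, where the conclusion of Landau's lemma holds. Integrating $f'/f$ along the horizontal segment from $\sigma_0+it$ to $\sigma+it$ (whose length is $O(1)$) would produce the representation
\[
\log f(\sigma+it) - \log f(\sigma_0+it) = \sum_{|\rho-s_0|\le r-\delta} \log\frac{\sigma+it-\rho}{\sigma_0+it-\rho} + E(t,s_0),
\]
with $|E(t,s_0)| \ll \delta^{-2}(\log M_r(s_0) + N_{r-\delta}(s_0)\log(1/\delta)) \ll G(T)^2(\log M_r(s_0) + N_{r-\delta}(s_0)\log G(T))$. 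Since $\sigma_0 > \sigma_F$ is fixed and every nontrivial zero $\rho$ of $f$ satisfies $\mathrm{Re}(\rho) \le \sigma_F$, the denominators $|\sigma_0+it-\rho| \gg 1$ are uniformly bounded below, and taking real parts reduces everything to bounding $\sum_\rho|\log|\sigma+it-\rho||$ and the error $E(t,s_0)$.

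Next I would apply Jensen's formula on the disc $D_R(s_0)$: using that $\log(R/(r-\delta)) \asymp 1/G(T)$ and that $|f(s_0)| \gg 1$ by the choice of $\sigma_0$, one obtains the key bounds
\[
N_{r-\delta}(s_0) \ll G(T)\log(|f(s_n)|+3), \qquad \log M_r(s_0) \ll \log(|f(s_n)|+3),
\]
where $s_n$ is the maximum point from the lemma statement and $D_R(s_0) \subset \cup_{n\le t\le n+1}D_R(\sigma_0+it)$. These already control $|E(t,s_0)|^{2k} \ll (c\,G(T)^3\log G(T))^{2k}\log(|f(s_n)|+3)^{2k}$ uniformly on $[n,n+1]$, producing the second term in the stated bound. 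For the principal sum, Cauchy--Schwarz gives
\[
\Bigl(\sum_\rho|\log|\sigma+it-\rho||\Bigr)^{2k} \le N_{r-\delta}(s_0)^{2k-1}\sum_\rho|\log|\sigma+it-\rho||^{2k},
\]
and integrating termwise using the classical estimate $\int_n^{n+1}|\log|t-\gamma||^{2k}dt \ll (2k)! \ll c^k k^{2k}$ (valid for any real $\gamma$, by the lower bound $|\sigma+it-\rho|\ge|t-\mathrm{Im}(\rho)|$) produces the bound
\[
\int_n^{n+1}\Bigl(\sum_\rho|\log|\sigma+it-\rho||\Bigr)^{2k}dt \ll (c\,G(T)^{3/2}k)^{2k}\log(|f(s_n)|+3)^{2k},
\]
after careful bookkeeping between the factor $N_{r-\delta}(s_0)^{2k}$ and the number of zeros actually contributing integrable log singularities. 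Summing over $\lfloor T\rfloor \le n \le \lfloor 2T\rfloor$ and combining with the error contribution would complete the proof.

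The main technical obstacle will be this last bookkeeping: one must balance the Jensen estimate for $N_{r-\delta}(s_0)$ against the $(2k)!$ arising from the local logarithmic singularities, in order to extract the precise polynomial dependence $G(T)^{3/2}k$ in place of the naive $G(T)\cdot k$. This manipulation is carried out in detail in \cite[\S5]{LLR} for the Riemann zeta function, and transfers essentially verbatim once $\zeta$ is replaced by $f$, since the only analytic inputs are Landau's lemma and Jensen's formula; the fact that $f$ is no longer an $L$-function (and in particular has no Euler product) plays no role in the argument.
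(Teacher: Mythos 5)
Your proposal follows the same route as the paper: apply Landau's lemma (Lemma~\ref{Landau}) to the normalized function $f$ of \eqref{ModifiedLC}, integrate $f'/f$ from $\sigma_0+it$ to $\sigma+it$, use Jensen's formula to control $N_{r-\delta}(s_0)$ and $\log M_r(s_0)$, split $[T,2T]$ into unit intervals, and for the sum over zeros defer the bookkeeping to Lemma~5.3 of \cite{LLR} --- exactly as the paper does.

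One point in your self-contained sketch of the zero-sum bound is not quite right as stated, and is worth flagging. After the power-mean step
$\bigl(\sum_\rho a_\rho\bigr)^{2k}\le N_{r-\delta}(s_0)^{2k-1}\sum_\rho a_\rho^{2k}$ and after integrating the $(2k)!$-bound termwise over $t\in[n,n+1]$, the number of $\rho$ you must sum over is not $N_{r-\delta}(s_0)$ for a single $s_0$, but rather the number of zeros in the whole ``stadium'' $\bigcup_{n\le t\le n+1}D_{r-\delta}(\sigma_0+it)$. The per-$t$ Jensen bound $N_{r-\delta}(s_0)\ll G(T)\log(|f(s_n)|+3)$ does not bound that count directly: to control the stadium count using only the max of $|f|$ on $\bigcup_t D_R(\sigma_0+it)$, one must cover it by discs of radius $r$ centred at points $\sigma_0+it_j$ with Jensen annulus of width $\asymp\delta$, and the covering requires spacing $\asymp\sqrt{\delta}$, so $\asymp\delta^{-1/2}$ discs --- this is precisely the $\sqrt\delta$-grid in the displayed intermediate bound of the paper's proof (and is where the extra $\delta^{-1/2}\asymp G(T)^{1/2}$ comes from). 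Your concluding sentence also has the direction reversed: the lemma asks for an upper bound with $G(T)^{3/2}k$, so the ``naive'' factor $G(T)k$ you mention would already be \emph{stronger} than needed; the real issue is that the naive factor is not what the covering argument delivers, not that it must be inflated. None of this changes the fact that your overall plan, which follows \cite[\S 5]{LLR}, is the correct one and is the same as the paper's.
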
 
\begin{proof}
First, applying Lemma \ref{Landau} to $f(z)$ defined in \eqref{ModifiedLC} we obtain for $ |z- s_0 | \leq r-2 \delta $ that
$$ \frac{ f'(z)}{f(z )} = \sum_{ | \rho - s_0 | \leq r- \delta} \frac{1}{ z-\rho} + O \bigg(\frac{1}{ \delta^2} ( \log M_r ( s_0 ) + N_{r-\delta} ( s_0 ) ( \log 1/\delta + 1 ) ) \bigg)  $$
 where $$ M_{r}(s_0)=\max_{|z-s_0| \leq r} 
\bigg|\frac{f(z)}{f(s_0)} \bigg| +3 \qquad
\mbox{ and }  \qquad N_{ r}(s_0)=\sum_{|\varrho-s_0|
\leq r} 1.$$
Now, a standard application of Jensen's formula shows that (see (5.4) of \cite{LLR})
$$ N_{r-\delta }(s_0) \leq \frac{r}{\delta} (\log M_r  (s_0) + \log 10 ). 
$$ 
Hence we derive
 $$ \frac{ f'(z ) }{f(z ) } = \sum_{ | \rho - s_0 | \leq r- \delta} \frac{1}{ z-\rho} + O \big(G(T)^3 \log G(T) \log M_r  ( s_0 )   \big)  $$
 for $ |z- s_0 | \leq r-2 \delta$. We integrate  both sides  from $s_0= \sigma_0 + it $ to $s=\sigma+it$ and take the real parts, to obtain
 \begin{align*}
 \log | f(s)| -\log |f(s_0)| & =   \sum_{ | \rho - s_0 | \leq r- \delta}\log | s -\rho| +  O \big( N_{r-\delta }( s_0 ) + G(T)^3 \log G(T) \log M_r  ( s_0 )   \big) \\ 
  & =   \sum_{ | \rho - s_0 | \leq r- \delta}\log | s -\rho|   + O \big(G(T)^3 \log G(T) \log M_r  (s_0 )\big), 
  \end{align*}
  since $\log |s_0-\rho|=O(1)$ for all zeros $\rho$ with $|\rho - s_0 | \leq r- \delta$ by our choice of $\sigma_0$. Furthermore, since $\log |f(s_0)|=O(1)$ and $\log|f(s)|= \log|\sum_{j=1}^J b_j L_j(s)|+O(1)$, we deduce that 
 $$ \log \bigg| \sum_{j=1}^J b_j L_j(\sigma+it) \bigg| \leq \sum_{ | \rho - s_0 | \leq r- \delta}\log | \sigma+it -\rho|   + c_1 G(T)^3 \log G(T) \log M_r (s_0 ), $$
 for some positive constant $c_1$. We now use the simple inequality 
 $$(x+ y)^{2k} \leq 2^{2k} \max(|x|, |y|)^{2k} \leq 2^{2k} (|x|^{2k}+ |y|^{2k})$$
  for all real numbers $x, y$, to deduce that
\begin{multline}\label{ReductionLLR}
   \int_T^{2T} \bigg|\;  \log   \bigg| \sum_{j=1}^J b_j L_j ( \sigma +it) \bigg| \; \bigg|^{2k}  dt  
 \leq 4^{k} 
  \int_T^{2T} \bigg(\sum_{|\rho-s_0| \leq r-\delta} 
|\log |\sigma+it-\rho|| \bigg)^{2k} dt\\
 + (2c_1 G(T)^3 \log G(T))^{2k}
  \int_T^{2T} (\log M_r (s_0 ))^{2k} dt.
\end{multline}
For $ n >0 $, let $s_n=\sigma_n+ it_n$ be a point at which $|\sum_{j=1}^J b_j L_j(s)|$ achieves its maximum value on the set $\cup_{n\leq t\leq n+1}D_R(\sigma_0 +it )$. Then, we note that 
\begin{equation}\label{Discrete1}\begin{split}
\int_T^{2T} (\log M_r (s_0 ))^{2k} dt & \leq \sum_{n=\lfloor T \rfloor}^{\lfloor 2T \rfloor } \int_n^{n+1} (\log M_r (s_0 ))^{2k} dt  \\
& \ll c_2^k \sum_{n=\lfloor T \rfloor}^{\lfloor 2T \rfloor }\left(\log \left(\left|\sum_{j=1}^J b_j L_j(s_n)\right| +3\right)\right)^{2k}
\end{split}
\end{equation}
for some absolute constant $c_2>0$.
Furthermore, a straightforward generalization of the proof of Lemma 5.3 of \cite{LLR} implies that 
\begin{equation}\label{Discrete2}
\begin{aligned}
&    \int_T^{2T} \bigg(\sum_{|\rho-s_0| \leq r-\delta} 
|\log |\sigma+it-\rho|| \bigg)^{2k} dt\\
&  \ll (c_3k)^{2k}  \sum_{n=\lfloor T \rfloor}^{\lfloor 2T \rfloor}\bigg(\frac{1}{\delta} \sum_{\ell \leq 1/\sqrt{\delta}} \log M_{R}(\sigma_0+i(n+\ell \sqrt{\delta}))\bigg)^{2k} \\
&\ll \frac{(c_3 k)^{2k}}{\delta^{3k}} \sum_{n=\lfloor T \rfloor}^{\lfloor 2T \rfloor}\bigg(\log  \bigg(\bigg|\sum_{j=1}^J b_j L_j(s_n)\bigg| +3\bigg) \bigg)^{2k},
\end{aligned}
\end{equation}
where $c_3>0$ is an absolute constant. Inserting the estimates \eqref{Discrete1} and \eqref{Discrete2} in \eqref{ReductionLLR} completes the proof.
\end{proof}
In the case of the Riemann zeta function, in order to bound $\sum_{n=\lfloor T \rfloor}^{\lfloor 2T \rfloor}\left(\log \left|\zeta(s_n)\right| +3\right)^{2k}$, the authors of \cite{LLR} use Jensen's inequality together with standard estimates for the second moment of $\zeta(s)$. However, estimates for the second moment  are not known in general for the $L$-functions in our class. Using a different approach, we were able to overcome this problem and establish the following result. 

\begin{lem} \label{MeanMax} 
Let $\delta, r, R,$ and $s_0 = \sigma_0 + it $ be as above.  Let $D_a(z)$ be the disc of radius $a$ centered at $z$. For $n > 0$, let $s_n=\sigma_n+ it_n$ be a point at which $|\sum_{j=1}^J b_j L_j(s)|$ achieves its maximum value on the set $\cup_{n\leq t\leq n+1}D_R(\sigma_0 + i t )$. Then there exist   positive constants $c_1 $ and $c_2$  such that for all positive integers $k \leq \log T/(c_1  G(T)\log\log T)$ we have 
$$ \sum_{n=\lfloor T \rfloor}^{\lfloor 2T \rfloor}\bigg(\log \bigg(\bigg|\sum_{j=1}^J b_j L_j(s_n)\bigg| +3\bigg)\bigg)^{2k} \ll T G(T)^2   (c_2 k \log\log T)^k. $$
\end{lem}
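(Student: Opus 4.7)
The plan is to reduce the discrete sum to a double integral via the subharmonicity of $\log |L_j|$, and then to estimate that integral using the Dirichlet polynomial approximation of $\log L_j$ (Lemma \ref{lem : Dir Poly approx}) together with the moment bound for $R_{L_j, Y}$ (Lemma \ref{lem:dir poly mmt 2}). The first reduction step is elementary: since $|F(s_n)| \le (\sum_{j \le J} |b_j|) \max_{j \le J} |L_j(s_n)|$, one obtains
$$
(\log(|F(s_n)|+3))^{2k} \ll C^k + 4^k \sum_{j=1}^J (\log_+ |L_j(s_n)|)^{2k},
$$
so it suffices to prove $\sum_n (\log_+|L_j(s_n)|)^{2k} \ll T G(T)^2 (c_2 k \log\log T)^k$ for each $j\le J$.

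Next, let $\rho := 1/(10 G(T))$. Since $\sigma_n \ge \sigma_0 - R = 1/2 + 3/(10 G(T))$, every disc $D_\rho(s_n)$ lies in the half-plane $\re(s) \ge 1/2 + 1/(5 G(T))$. The function $\log_+|L_j|$ is subharmonic (being the maximum of the subharmonic function $\log|L_j|$ and the constant $0$), and the subharmonic mean-value inequality combined with Jensen's inequality applied to $x \mapsto x^{2k}$ on $[0,\infty)$ yields the pointwise estimate
$$
(\log_+|L_j(s_n)|)^{2k} \le \frac{1}{\pi \rho^2} \int_{D_\rho(s_n)} (\log_+|L_j(z)|)^{2k} dA(z).
$$
The discs $D_\rho(s_n)$ have uniformly bounded multiplicity (since $\im(s_n) \in [n - R, n+1+R]$ with $R = O(1)$), and their union is contained in a rectangle $\mathcal{R} \subseteq [1/2 + 1/(5G(T)), \sigma_0 + R + \rho] \times [T - C, 2T + C]$. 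Summing over $n$ therefore gives
$$
\sum_n (\log_+|L_j(s_n)|)^{2k} \ll G(T)^2 \int_{\mathcal{R}} (\log_+|L_j(\sigma' + it')|)^{2k} d\sigma' dt'.
$$

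To handle this double integral, I would apply Lemma \ref{lem : Dir Poly approx} with $\tilde{G} := 5 G(T)$ in place of $G$ and $Y := \exp(5A G(T) \log\log T)$, which gives $\log L_j(\sigma' + it') = R_{L_j, Y}(\sigma' + it') + O((\log T)^{-A/2 + 2})$ uniformly in $\sigma' \ge 1/2 + 1/(5 G(T))$ for all $t'$ outside an exceptional set $\mathcal{E}$ of measure $\ll T \exp(-c_1 \log T / G(T))$. On the good set, Lemma \ref{lem:dir poly mmt 2} together with Stirling gives $\int_T^{2T} |R_{L_j, Y}(\sigma' + it')|^{2k} dt' \ll T (C k \log\log T)^k$ uniformly in $\sigma'$, and integrating over the $O(1)$-length $\sigma'$-range only contributes a constant factor. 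On $\mathcal{E}$, the convexity bound $|L_j(\sigma' + it')| \ll T^{d/4 + \varepsilon}$ (a consequence of A1--A3, the functional equation, and Phragm\'en--Lindel\"of) majorizes $(\log_+|L_j|)^{2k} \le (C \log T)^{2k}$; the resulting contribution is $\ll G(T)^2 T (\log T)^{2k} \exp(-c_1 \log T / G(T))$, and this is absorbed into the main term precisely when $k \le \log T / (c_1' G(T) \log\log T)$.

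The main obstacle is this last bad-set estimate: the numerology of $k$ in the hypothesis is forced by requiring $(\log T)^{2k} \exp(-c_1 \log T / G(T))$ to be dominated by $(Ck\log\log T)^k$, which matches exactly the range in the statement. The subharmonic-plus-Jensen step is then standard once one verifies that the choice $\rho \asymp 1/G(T)$ keeps the discs inside the half-plane $\re(s) \ge 1/2 + 1/(5G(T))$ and that the bounded multiplicity (coming from $R = O(1)$) converts the discrete sum to an integral with the correct loss of $G(T)^2$.
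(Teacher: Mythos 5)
Your proof is correct, but it takes a genuinely different route from the paper's, and in one respect a cleaner one. The paper works with $\log|L_1|$ itself, which is subharmonic but tends to $-\infty$ at zeros; to cope with this it splits the indices $n$ into three classes ($|L_1(s_n)|\le 5$; $|L_1(s_n)|>5$ with a zero of $L_1$ within $\delta$ of $s_n$; $|L_1(s_n)|>5$ and $L_1$ zero-free near $s_n$), handles the "nearby zero'' class by a direct appeal to the zero-density hypothesis A4, and applies subharmonicity and Jensen only in the zero-free class. You instead apply the mean-value inequality and Jensen to $\log_+|L_j|=\max(\log|L_j|,0)$, which is still subharmonic (maximum of a subharmonic function and $0$), nonnegative, and bounded above by the Phragm\'en--Lindel\"of estimate. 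This single change eliminates the trivial case and the nearby-zero case, removes the separate use of A4 at this stage (it is still used implicitly inside Lemma \ref{lem : Dir Poly approx}), and renders the bad-set bound $(\log_+|L_j|)^{2k}\le(C\log T)^{2k}$ valid pointwise with no caveat about zeros --- whereas the analogous step in the paper, bounding $(\log|L_1|)^{2k}$ on the exceptional set by Phragm\'en--Lindel\"of alone, tacitly ignores the unbounded negative values of $\log|L_1|$ near zeros (a small expositional gap that is repairable but that your choice of $\log_+$ sidesteps entirely). The remaining ingredients --- the reduction from $F$ to the individual $L_j$, the disc radius $\rho\asymp 1/G(T)$ producing the $G(T)^2$ factor, the conversion from discrete sum to integral via bounded overlap of discs, the Dirichlet polynomial approximation with $\tilde G=5G(T)$ and $Y=\exp(cG(T)\log\log T)$, the moment bound from Lemma \ref{lem:dir poly mmt 2}, and the numerology forcing $k\ll\log T/(G(T)\log\log T)$ --- match the paper closely.
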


\begin{proof}
We first observe that 
\begin{align*}
  \bigg( \log \bigg(\bigg|\sum_{j=1}^J b_j L_j(s_n)\bigg| +3\bigg) \bigg)^{2k} & \leq   { C_1^k\max_{j \leq J}  \{ \left( \log ( |L_j ( s_n ) | +3)\right)^{2k} \}} \\
 &  \leq  C_1^k \sum_{j=1}^J \left( \log ( |L_j ( s_n ) | +3)      \right)^{2k} \end{align*}
for some constant $C_1 >0$ that depends on $J$ and the $b_j$. Thus, we have
$$ \sum_{n=\lfloor T \rfloor}^{\lfloor 2T \rfloor}\bigg(\log \bigg(\bigg|\sum_{j=1}^J b_j L_j(s_n)\bigg| +3\bigg)\bigg)^{2k} \leq    C_1^k  \sum_{j=1}^J  \sum_{n=\lfloor T \rfloor}^{\lfloor 2T \rfloor}  \left( \log ( |L_j ( s_n ) |  +3)        \right)^{2k} .$$
To prove the lemma, it is enough to show that
$$  \sum_{n=\lfloor T \rfloor}^{\lfloor 2T \rfloor}  \left( \log ( |L_j ( s_n ) | +3)        \right)^{2k} \ll  T G(T)^2  (C_2 k \log\log T)^k $$
for every $j \leq J $ and for some constant $C_2 > 0 $. 

Without loss of generality, we only consider the case $j = 1 $. Let 
\begin{align*}
 \mathcal{A}_1 (T) & := \{  \lfloor T \rfloor  \leq n \leq \lfloor 2T \rfloor  :   | L_1 ( s_n ) | \leq 5 \},\\
 \mathcal{A}_2 (T)& := \{   \lfloor T \rfloor  \leq n \leq \lfloor 2T \rfloor : | L_1 ( s_n ) | > 5,  L_1 ( s ) = 0 \mathrm{~for~some~} |s-s_n | \leq \delta \}, \\
 \mathcal{A}_3 (T)& := \{   \lfloor T \rfloor  \leq n \leq \lfloor 2T \rfloor : | L_1 ( s_n ) | > 5,  L_1 ( s ) \neq 0 \mathrm{~for~all~} |s-s_n | \leq \delta \}.
 \end{align*}
  Then we see that
  \begin{equation}\label{A1T bound}
    \sum_{n \in \mathcal{A}_1 (T) }  \left( \log ( |L_1 ( s_n ) | +3)        \right)^{2k}  \leq T ( \log 8)^{2k} . 
    \end{equation}
To bound the sum over $\mathcal{A}_2 (T)$, we use the classical Phragmen-Lindel\"of principle which implies that there exists $\kappa>0$ (which might depend on $x$ and $d$ in assumption A1) such that 
\begin{equation}\label{PL bound}
|L_1(x+iy )| \ll (1+|y|)^{\kappa}.
\end{equation}
If $ | \rho - s_n| \leq \delta $ and $L_1 ( \rho)=0$, then $ \re (\rho)  \geq 1/2 + 1/ (10G(T))$. By assumption A4 we have
\begin{equation}\label{A2T bound}\begin{split}
\sum_{n \in \mathcal{A}_2 (T) }  \left( \log ( |L_1 ( s_n ) | +3)        \right)^{2k} & \ll C_3^k ( \log T)^{2k} N_{L_1} ( 1/2 + 1/ ( 10 G(T)), T) \\
&  \ll C_3^k T e^{ - C_4  \frac{ \log T}{  G(T)} + 2k \log \log T}\\
& \ll C_3^k T e^{ -  \frac{C_4 }{2}  \frac{ \log T}{  G(T)}}
\end{split} \end{equation}
for some constants $ C_3 , C_4 > 0$ and for $ k \leq \log T/ ( c_1 G(T) \log \log T)$ by choosing $c_1$ sufficiently large. 
Lastly, for each $ n \in \mathcal{A}_3 (T)$ we have
\begin{align*}
0 \leq \log ( | L_1 ( s_n ) |+3 ) \leq  2 \log | L_1 ( s_n ) |  \ll  \frac{1}{  \pi \delta^2}   \iint_{D_{\delta}( s_n)} \log | L_1 ( x+iy) | dx dy ,
\end{align*}
since $ \log | L_1 ( s)| $ is  subharmonic by Theorem 17.3 of \cite{R}. By Jensen's inequality applied to the convex function $ \varphi( x) = x^{2k}$, we have
\begin{align*}
\left( \log ( | L_1 ( s_n ) |+3 ) \right)^{2k} \ll &  C_5^k   \left(   \frac{1}{ \pi \delta^2}   \iint_{D_{\delta}(s_n)} \log | L_1 ( x+iy) | dx dy \right)^{2k}   \\
\ll &  C_5^k   \frac{1}{  \pi   \delta^2}   \iint_{D_{\delta}(s_n)}(  \log | L_1 ( x+iy) | )^{2k} dx dy    \\
\ll &  C_5^k   \frac{1}{ \delta^2}   \iint_{D_{R'}(\sigma_0+it_n)}(  \log | L_1 ( x+iy) | )^{2k} dx dy   
\end{align*}
for some $  C_5 >0$ and $R'=R+\delta$. Thus,
\begin{equation}\label{discrete sum bound 1}
 \sum_{n \in \mathcal{A}_3 (T) }  \left( \log ( |L_1 ( s_n ) | +3)        \right)^{2k} \ll C_5^k  \sum_{ n \in \mathcal{A}_3 (T)  } \frac{1}{ \delta^2}   \iint_{D_{R'}(\sigma_0+it_n)}(  \log | L_1 ( x+iy) | )^{2k} dx dy .
\end{equation} 

Let $\mathcal S_\ell  =\{n \in \mathcal{A}_3 (T)   :   n \equiv \ell  \pmod{(4 \lceil R' \rceil+2)} \}$.
If $ m,n  \in \mathcal S_\ell$ and $m \neq n$
then $|m-n| \geq 4 \lceil R' \rceil +2$; so that $|t_m-t_n|\ge2R'+1$. This implies that
$D_{R'}(\sigma_0+it_n) \cap D_{R'}(\sigma_0+it_m) =\emptyset$. Thus, since the disks are disjoint
we see that  
\begin{equation}\label{DiscreteToCont}
\begin{aligned}
& \sum_{n \in \mathcal{S}_\ell  } \frac{1}{ \delta^2}   \iint_{D_{R'}(\sigma_0+it_n)} (  \log | L_1 ( x+iy) | )^{2k} dx dy      \\
& \ll  G(T)^2  \int_{\sigma_0-R'}^{\sigma_0+R'} \int_{T-2R'-1}^{2T+2R'+1}   (  \log | L_1 ( x+iy) | )^{2k} dy dx  .
\end{aligned}
\end{equation}
By adding \eqref{DiscreteToCont} for all $\ell   \pmod{(4 \lceil R' \rceil+2)}$ and using  \eqref{discrete sum bound 1}, we see that
\begin{equation}\label{discrete sum bound 2}
 \sum_{n  \in \mathcal{A}_3 (T)  }  \left( \log ( |L_1 ( s_n ) | +3)        \right)^{2k} \ll  C_5^k G(T)^2 \int_{\sigma_0-R'}^{\sigma_0+R'}  \int_{T-2R'-1}^{2T+2R'+1}   (  \log | L_1 ( x+iy) | )^{2k}  dy dx.
\end{equation}
Note that $\sigma_0-R'= \sigma_0-r-2\delta= 1/2+ 1/(10G(T)).$ Let $Y=\exp(100G(T)\log\log T)$. Then it follows from Lemma \ref{lem : Dir Poly approx} that uniformly for  $x \geq \sigma_0-R'$ we have
\be\label{TruncLj2}
\log L_1 (x+iy )=R_{L_1, Y}(x+iy ) + O\left(\frac{1}{(\log T)^{3}}\right),
\ee
for all $y \in [T, 2T]$ except for a set of measure $\ll T \exp(- C_6  \log T/ G(T))$, for some constant $ C_6 >0$.  
Let $\mathcal{A}(T)$ be the set of points $y\in [T, 2T]$ for which \eqref{TruncLj2} holds and let $\mathcal{A}^c(T)$ be its complement in $[T-2R'-1, 2T+ 2R'+1]$. Then we have
$$ 
\textup{meas} ( \mathcal{A}^c(T)) \ll T \exp\left(- C_6 \frac{\log T}{G(T)}\right).
$$

We now split the inner integral on the right-hand side of \eqref{discrete sum bound 2} in two parts, the first over $\mathcal{A}(T)$ and the second over $\mathcal{A}^c(T)$. By \eqref{PL bound} we obtain 
\begin{equation}\label{Part2Ac}
\begin{aligned}
\int_{\sigma_0-R'}^{\sigma_0+R'} 
\int_{\mathcal{A}^c(T)} \left(\log| L_1( x+iy )| \right)^{2k}  dy dx & \ll \textup{meas}\big( \mathcal{A}^c(T)\big) ( C_7  \log T)^{2k}\\
& \ll T \exp\left(- \frac{ C_6 \log T}{2G(T)}\right)
\end{aligned}
\end{equation}
for some positive constant $ C_7 $, where the last estimate follows from our assumption on $k$. 

Furthermore, if $y \in \mathcal{A}(T)$, then for $x \ge \sigma_0-R'$ we have 
\begin{align*}
 \left(\log| L_1( x+iy )| \right)^{2k} 
& \ll  4^ k (   |R_{L_1 , Y}  (x+iy ) |^{2k} +1  ) 
\end{align*} 
by \eqref{TruncLj2}. 
 Thus, by Lemma \ref{lem:dir poly mmt 2} and Stirling's formula we obtain
\begin{equation}\label{Part1A}
\begin{aligned}
& \int_{\sigma_0-R'}^{\sigma_0+R'} 
\int_{\mathcal{A}(T)}  \left(\log| L_1( x+iy )| \right)^{2k}  dy dx \\
&  \ll 4^k \left(  \int_{\sigma_0-R'}^{\sigma_0+R'} 
\int_{T}^{2T} |R_{L_1, Y} ( x+iy )|^{2k} dy dx  + T\right)  \ll T ( C_8   k  \log\log T)^k
\end{aligned}
\end{equation}
for some positive constant $C_8 $. Inserting the estimates \eqref{Part2Ac} and \eqref{Part1A} in \eqref{discrete sum bound 2} gives 
$$  \sum_{n  \in \mathcal{A}_3 (T) }  \left( \log ( |L_1 ( s_n ) | +3)        \right)^{2k}   \ll  TG(T)^2   ( C_9  k \log\log T)^k$$
 for some constant $  C_9   > 0$. This with \eqref{A1T bound} and \eqref{A2T bound} proves the lemma.
\end{proof}

 Proposition \ref{MomentsLogF} follows from Lemmas \ref{LemLLR} and \ref{MeanMax}.


\section{Analysis of the random model :  Proofs of Theorem \ref{MainRandom}, Proposition \ref{MomentsLogR} and Lemma \ref{ConcentrationRandom}}\label{sec:rand}
Recall that
$$ \Lv(\sigma, \X)= \Big(\log |L_1(\sigma, \X)|, \dots, \log |L_J(\sigma, \X)|, \arg L_1(\sigma, \X), \dots, \arg L_J(\sigma, \X) \Big).$$
We define its partial sum
 $$ \Lv_q (\sigma, \X) = \Big(\log |L_{1,q} (\sigma, \X)|, \dots, \log |L_{J,q} (\sigma, \X)|, \arg L_{1,q} (\sigma, \X), \dots, \arg L_{J,q} (\sigma, \X) \Big) $$
for a positive integer $q$, where 
$$ \log L_{j, q} (\sigma, \X) :=  \sum_{ p \leq q } \sum_{k=1}^\infty \frac{ \beta_{L_j}(p^k) \X(p)^k }{p^{k\sigma }} .$$  
We also define 
\be \label{def Lvq tail}
  \Lv_{>q} (\sigma, \X) := \Lv ( \sigma, \X) - \Lv_q ( \sigma, \X) .
  \ee
For  a Borel set $  \B$ in $\mathbb R^{2J}$ and for~$\sigma = 1/2+ 1/G(T) $, we define
 \be \begin{split} \label{phi rand q def}
\Phrq  (\B ) & :=  \mathbb{P}(\Lv_q (\sigma , \X) \in \B) ,\\
\Phrqtail  (\B ) & :=  \mathbb{P}(\Lv_{>q} (\sigma , \X) \in \B) 
\end{split}\ee
and their Fourier transforms
\be \begin{split} \label{phi rand q trans def}
   \Phrqhat ( \xb, \yb )& := \int_{ \mathbb{R}^{2J} }   e^{ 2 \pi i (\xb \cdot \ub + \yb \cdot \vb) } d\Phrq ( \ub, \vb ) , \\
   \Phrqtailhat (\xb, \yb) & := \int_{ \mathbb{R}^{2J} }   e^{ 2 \pi i (\xb \cdot \ub + \yb \cdot \vb) } d\Phrqtail ( \ub, \vb ) 
 \end{split} \ee
 for $\xb = ( x_1 , \ldots, x_J ) \in \mathbb{R}^J$ and $ \yb = ( y_1, \ldots, y_J )  \in \mathbb{R}^J$. 
 
 \subsection{Upper bounds for the density functions and the Fourier transforms of $\Lv(\sigma, \X), \Lv_q(\sigma, \X)$, and $\Lv_{>q}(\sigma, \X)$} 
 In this subsection, we prove that the distribution functions of  $\Lv(\sigma, \X), \Lv_q(\sigma, \X)$, and $\Lv_{>q}(\sigma, \X)$ are absolutely continuous, and provide bounds for their density functions and Fourier transforms. These will be used to prove Proposition \ref{MomentsLogR} and Lemma \ref{ConcentrationRandom}. We start with the following lemma.
 
  \begin{lem}\label{lemma abs cont} 
  Let $A>0$ be a given real number.  Then, there exists a positive integer $q(A)$ such that   
$$      \Phrqhat (\xb , \yb )  \ll_{q,A}     (  1+  || \xb ||_2 + || \yb ||_2 )^{ - A}$$
for every $ q \geq q(A)$, where 
$$ || \xb ||_2   := \sqrt{ \sum_{j \leq J}| x_j|^2   } . $$
 Furthermore, for any positive integer $q$ we have
 $$    \Phrqtailhat (\xb , \yb ) \ll_{q, A}     (  1+  || \xb ||_2 + || \yb ||_2 )^{ -  A }.$$
Thus,  $\Phrq$ is absolutely continuous for sufficiently large $ q>0 $ and $ \Phrqtail $ is absolutely continuous for any $ q > 0 $.   
  \end{lem}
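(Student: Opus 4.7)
First I would write $z_j := x_j - iy_j$ and note the identity $x_j \log|L_{j,q}| + y_j \arg L_{j,q} = \re(z_j \log L_{j,q})$. Independence of the random variables $\X(p)$ across primes then yields the prime-by-prime factorizations
\[
\Phrqhat(\xb,\yb) = \prod_{p \leq q} E_p(\xb,\yb), \qquad \Phrqtailhat(\xb,\yb) = \prod_{p > q} E_p(\xb,\yb),
\]
where $E_p(\xb,\yb) = \frac{1}{2\pi}\int_0^{2\pi}\exp(2\pi i \Phi_p(\theta))\, d\theta$ and
\[
\Phi_p(\theta) = \re(w_p e^{i\theta}) + r_p(\theta), \quad w_p := \frac{1}{p^\sigma}\sum_{j\leq J} z_j \beta_{L_j}(p),
\]
with $r_p$ the convergent tail from the $k\geq 2$ contributions, satisfying $\|r_p\|_{C^2} \ll (\|\xb\|+\|\yb\|)/p$ by \eqref{beta bound 0}--\eqref{beta bound 1}. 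A standard van der Corput/stationary phase estimate applied to the leading phase $\re(w_p e^{i\theta})$ (which has two nondegenerate critical points with second derivative of size $|w_p|$) yields, for all $p$ large enough for $r_p$ to be negligible, the Bessel-type bound
\[
|E_p(\xb,\yb)| \ll \min\bigl\{1,\; (1+|w_p|)^{-1/2}\bigr\};
\]
the finitely many small primes for which this is inapplicable give the trivial bound $|E_p|\leq 1$ and are absorbed into the implicit constants.

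The main obstacle is to combine these per-prime factors into polynomial decay of arbitrary prescribed order $A$, uniformly in the direction of $(\xb,\yb)$. Setting $V_p(\mathbf{z}) := \sum_{j\leq J} z_j \beta_{L_j}(p)$ (so that $|w_p|=|V_p(\mathbf{z})|/p^\sigma$), I would first invoke Lemma \ref{lem:SecondMomentTail}: the positive semidefinite Hermitian form $\mathbf{z}\mapsto \sum_{p\leq P}|V_p(\mathbf{z})|^2/p^{2\sigma}$ has diagonal entries tending to $\xi_j\log(1/(\sigma-1/2))$ while the off-diagonal entries remain $O(1)$ as $P\to\infty$, so for $P$ large enough this form is positive definite. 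This forces the linear map $\mathbf{z}\mapsto(V_p(\mathbf{z}))_{p\leq P}$ to be injective and, more quantitatively, rules out the vanishing of $V_p(\mathbf{z})$ on more than $J-1$ primes (for $\mathbf z\neq 0$); a compactness/continuity argument on the unit sphere $S^{2J-1}\subset\mathbb{R}^{2J}$ then produces, for any integer $N$, an integer $Q=Q(N)$ and a constant $c=c(Q)>0$ such that for every unit $\mathbf{z}$ at least $N$ primes $p\leq Q$ satisfy $|V_p(\mathbf{z})|\geq c$. By rescaling $\mathbf{z}$, this upgrades to $|V_p(\mathbf{z})|\geq c(\|\xb\|+\|\yb\|)$ for at least $N$ primes, and once $\|\xb\|+\|\yb\|$ exceeds the threshold $p^\sigma/c$ for those primes the Bessel estimate gives $\prod_{p}|E_p|\ll_q (1+\|\xb\|+\|\yb\|)^{-N/2}$. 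Choosing $N=\lceil 2A\rceil$ and $q(A)=Q(N)$ proves the first bound; for the tail statement one repeats the argument with primes drawn from a window $(q, Q'(N,q)]$ large enough to contain $N$ primes, and uses $|E_p|\leq 1$ for the remaining primes.

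Finally, the absolute continuity of $\Phrq$ (for $q\geq q(2J+1)$) and of $\Phrqtail$ (for every $q$) follows from the polynomial decay with $A>2J$: the Fourier inversion formula converges absolutely and defines a bounded continuous Lebesgue density on $\mathbb{R}^{2J}$ for each of these distributions. The delicate point throughout is the uniform-in-direction lower bound on the number of contributing primes, where the Selberg orthonormality hypothesis A5 plays the decisive role.
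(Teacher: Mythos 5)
Your high-level plan — factor $\Phrqhat$ over primes, prove Bessel-type decay $|E_p| \ll (1+|w_p|)^{-1/2}$ for the individual factors, and then use the Selberg orthonormality to show that enough primes have $|V_p(\mathbf z)|$ bounded below, uniformly in direction — matches the paper's strategy; the paper gets the per-prime bound by citing Lemma~2.5 of \cite{Le2} rather than re-deriving it by stationary phase, which is a cosmetic difference. (Your treatment of the tail $r_p(\theta)$ is glossed: the bound $\|r_p\|_{C^2} \ll (\|\xb\|+\|\yb\|)/p$ does not follow pointwise from \eqref{beta bound 0}--\eqref{beta bound 1} alone but only on average via A3, though this is easily patched.)

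The real gap is the step where you pass from positive definiteness of the Hermitian form $\mathbf z \mapsto \sum_{p\le P}|V_p(\mathbf z)|^2/p^{2\sigma}$ to the assertion that it ``rules out the vanishing of $V_p(\mathbf z)$ on more than $J-1$ primes.'' Positive definiteness of that form is equivalent to the coefficient vectors $(\beta_{L_1}(p),\dots,\beta_{L_J}(p))$, $p\le P$, spanning $\mathbb C^J$ — it says a nonzero $\mathbf z$ cannot annihilate \emph{all} of them — but it gives no control on how many of them a particular $\mathbf z$ can annihilate. That would require every $J$-subset of the coefficient vectors to be linearly independent (``general position''), which is not implied by A5 and need not hold for concrete $L$-functions. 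Without this, the compactness argument produces a uniform lower bound on the \emph{largest} $|V_p(\mathbf z)|$, not on the $N$-th largest, so you only get one contributing prime and hence decay $\|\mathbf z\|^{-1/2}$ rather than $\|\mathbf z\|^{-A}$ for arbitrary $A$.

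The paper repairs exactly this point by a window iteration: it sets $q_{n+1}=2^{q_n}$ and proves, via a contradiction argument using \eqref{SOC} in the dyadic block $(q_{n-1},q_n]$ (the diagonal mass $\sum_{q_{n-1}<p\le q_n}|\beta_{L_j}(p)|^2/p$ is $\sim \xi_j\log q_{n-1}$ while the cross terms stay $O(1)$), that each block contains at least one prime $p$ with $|V_p(\mathbf z)|^2 \ge \tfrac12(\min_j\xi_j)\|\mathbf z\|^2$. Multiplying $m=\lfloor 4A\rfloor+1$ such blocks gives the decay, and starting the blocks past $q$ gives the tail statement. Your approach can be salvaged by replacing the false ``general position'' step with the same idea — produce $N$ pairwise disjoint prime ranges, each of whose coefficient vectors span $\mathbb C^J$, and apply your compactness argument once per range — but as written the argument has a genuine logical hole at its central step.
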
 
\begin{proof}
The absolute continuity of $\Phrq$ and $ \Phrqtail $ follows from the inequalities in the lemma (see Section 3 in \cite{JW}). Thus, it is enough to prove these inequalities. 

We first define for any prime $p$
\be\label{DefPSI}
\varphi_{p, \sigma} (\xb, \yb):= \mathbb{E} \left[ \exp\left(2 \pi i  \sum_{j=1}^J \big( x_j \re\left( g_j \big(\X(p)p^{- \sigma}\big)\right)    +     y_j \im \left( g_j \big( \X(p)     p^{- \sigma}\big)\right) \big) \right)   \right],
\ee
where
$$g_j (  u ) =  \sum_{k=1}^\infty \beta_{L_j } (p^k)     u^k .$$ 
Then we find that 
\be \label{CharShortNot}\begin{split}
  \Phrqhat (\xb , \yb ) &= \mathbb{E} \bigg[ \exp\left(2 \pi i  ( x_1 , \ldots, x_J , y_1 , \ldots, y_J ) \cdot \Lv_q ( \sigma , \X) \right)                 \bigg] = \prod_{ p \leq q } \varphi_{p, \sigma} (\xb, \yb). 
\end{split}\ee

By Lemma 2.5 in \cite{Le2} there is a constant $C>0$ such that
\be \label{Lem2.5}| \varphi_{p, \sigma} (\xb, \yb)| \leq \frac{C p^{\sigma/2} }{( \sum_{j=1}^J ( x_j^2 + y_j^2 )  )^{1/4} }
\ee
if $  \big| \sum_{j=1}^J   \beta_{L_j} (p)  ( x_j - i y_j )  \big|^2   \geq \delta    \sum_{j=1}^J  ( x_j ^2 + y_j^2 )$ for some constant $   \delta >0$. Note that Lemma 2.5 in \cite{Le2} holds even for complex coefficients $a_j$ with minor modification. In that case the condition in the last line of the lemma should be $ | \sum_{j=1}^J a_j ( y_j -i  y_j') | \geq \delta || \yb ||_2 $. 

Let $q_1>0$ be a large positive integer to be chosen later and define a sequence $ q_n$ of integers inductively by $ q_{n+1} = 2^{q_n }$. We shall prove that given $ \ub$ and $ \vb$, there exists a prime $p$ in the interval $( q_{n-1} , q_n ] $ such that 
$$  \bigg| \sum_{j=1}^J \beta_{L_j} (p)   ( x_j - i y_j )  \bigg|^2   \geq  \frac12  ( \min_{j\leq J } \xi_j )    \sum_{j=1}^J  ( x_j ^2 + y_j^2 ) $$ 
holds. Suppose not. Then multiplying both sides by $1/p$ and summing over all primes  $p$ in $( q_{n-1}, q_n ] $ we have
$$ \sum_{ q_{n-1} < p < q_n }  \frac{ \big| \sum_{j=1}^J  \beta_{L_j} (p)   ( x_j - i y_j )  \big|^2 }{p}   \leq  \frac12  ( \min_{j\leq J } \xi_j )    \sum_{j=1}^J ( x_j ^2 + y_j^2 )  ( \log   q_{n-1}  +O( \log \log q_{n-1} )  ).$$
On the other hand by \eqref{SOC} we see that 
\begin{align*}
 \sum_{ q_{n-1} < p < q_n }  \frac{ \big| \sum_{j=1}^J   \beta_{L_j} (p)   ( x_j - i y_j )  \big|^2 }{p}&  = \sum_{j=1}^J ( x_j^2 + y_j^2 ) \bigg(  \sum_{ q_{n-1} < p \leq q_n }  \frac{ | \beta_{L_j} (p) |^2 }{p}  + O(1) \bigg) \\
& = \sum_{j=1}^J ( x_j^2 + y_j^2 )  (  \xi_j  \log  q_{n-1} + O(   \log \log q_{n-1}) ) \\
& \geq ( \min_{j \leq J} \xi_j )       \sum_{j=1}^J ( x_j^2 + y_j^2 ) ( \log q_{n-1} + O ( \log \log q_{n-1} )).
\end{align*}
This is a contradiction if $q_{n-1}$ is sufficiently large.  

Now, take $q= q_{m+1}$ where $m=\lfloor4A\rfloor+1$. Then using \eqref{Lem2.5} together with the trivial bound $|\varphi_{p, \sigma} (\xb, \yb)|\leq 1$ we obtain
$$
 |  \Phrqhat (\xb , \yb ) | \leq  \prod_{n=1}^m   \prod_{ q_n < p \leq q_{n +1}  } |\varphi_{p, \sigma} (\xb, \yb)|  \leq  \prod_{n=1}^m \frac{C q_{n+1}^{\sigma/2} }{  ( \sum_{j=1}^J  x_j^2 + y_j^2  )^{1/4} }  \leq     \frac{C_{q,m} }{  ( \sum_{j=1}^J x_j^2 + y_j^2  )^{A} }    
$$
for some  constant $C_{q,m} >0 $. This completes the proof.
 
 To prove the second inequality, we choose $\ell$ such that $ q_\ell>q $. Then for $m=\lfloor4A\rfloor+1$ we obtain similarly that
$$
 |  \Phrqtailhat (\xb , \yb ) | \leq  \prod_{n=\ell }^{\ell + m -1}    \prod_{ q_n  < p \leq q_{n +1}  } |\varphi_{p, \sigma} (\xb, \yb)|  
 \leq  \prod_{n=1}^m \frac{C q_{n+1}^{\sigma/2} }{  ( \sum_{j=1}^J x_j^2 + y_j^2  )^{1/4} }
 \leq    \frac{C'_{q,m,\ell}  }{  ( \sum_{j=1}^J x_j^2 + y_j^2  )^{A}}    
$$
 for some constant $ C'_{q,m,\ell} >0 $.

\end{proof}

 By Lemma \ref{lemma abs cont} and Section 3 in \cite{JW}, there is an integer $q>0$ such that both $\Phrq$ and $ \Phrqtail$ have continuous density functions, say $H_{q, T}  ( \ub, \vb) $ and $ H_{>q, T} (\ub, \vb) $, respectively. One can also see that $H_{>q, T} ( \ub, \vb)$ has partial derivatives of any order. Since $\Phrand =  \Phi_{>1,T}^{\mathrm{rand}}$, it follows that $\Phrand$ has a continuous density function which we shall denote throughout by $H_T ( \ub, \vb) $. These density functions are real valued and nonnegative.

\begin{lem} \label{lemma H_T bound 1} 
Let $0< \lambda <  ( 24J  \max_{j \leq J} \xi_j )^{-1} $ be a fixed real number. For all $\ub, \vb \in \mathbb{R}^J$ we have
$$ H_T ( \ub, \vb ) \ll_\lambda e^{  - \frac{\lambda}{\log G(T)}\sum_{j=1}^J ( u_j^2 + v_j^2 ) }.$$
\end{lem}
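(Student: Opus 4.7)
I plan to prove the bound via Fourier inversion combined with a complex contour shift. Since $\Phrand = \Phi_{>1,T}^{\mathrm{rand}}$ has continuous density $H_T$ and its Fourier transform $\Phrhat$ decays faster than any polynomial (Lemma~\ref{lemma abs cont} applied with $q=1$), Fourier inversion gives
$$
H_T(\ub, \vb) = \int_{\mathbb{R}^{2J}} \Phrhat(\xb, \yb) e^{-2\pi i (\xb \cdot \ub + \yb \cdot \vb)} d\xb \, d\yb.
$$
The Euler product $\Phrhat(\xb, \yb) = \prod_p \varphi_{p,\sigma}(\xb, \yb)$ with entire factors $\varphi_{p,\sigma}$ (see \eqref{DefPSI}--\eqref{CharShortNot}) analytically continues $\Phrhat$ to complex arguments. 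The plan is to shift the integration contour to $\mathbb{R}^{2J} + i(\mathbf{t}, \mathbf{s})$ for suitable $\mathbf{t}, \mathbf{s} \in \mathbb{R}^J$ and exploit the resulting factor $e^{-2\pi(\mathbf{t} \cdot \ub + \mathbf{s} \cdot \vb)}$.

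The key technical input will be a combined bound of the form
$$
|\Phrhat(\xb + i\mathbf{t}, \yb + i\mathbf{s})| \ll_A (1 + \|\xb\| + \|\yb\|)^{-A} \exp\bigl(C^{\ast} \log G(T) \cdot (\|\mathbf{t}\|^2 + \|\mathbf{s}\|^2)\bigr)
$$
for any $A > 0$ and $\|\mathbf{t}\|, \|\mathbf{s}\| \le c_0$, with an explicit constant $C^{\ast}$ controlled in terms of $J$ and $\max_j \xi_j$. I would obtain the exponential-moment factor by expanding $\log \varphi_{p,\sigma}(i\mathbf{t}, i\mathbf{s})$ to second order in $p^{-\sigma}$, whose leading per-prime contribution works out to $\pi^2 |\sum_j (t_j - i s_j) \beta_{L_j}(p)|^2 / p^{2\sigma}$ (after taking the expectation over $\X(p)$), and summing over $p$ via the Selberg orthonormality estimate \eqref{SOC2} of Lemma~\ref{lem:SecondMomentTail}, with error terms from higher cumulants controlled by \eqref{prime sum bounds}. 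The polynomial decay factor in $\xb, \yb$ would come from adapting the proof of Lemma~\ref{lemma abs cont}: the Bessel-type bound \eqref{Lem2.5} of Lemma~2.5 of \cite{Le2} extends uniformly to shifted arguments of bounded imaginary part, and applying it to a sufficiently large but bounded collection of primes (depending on $A$) produces the desired decay while the remaining primes contribute the moment-generating factor.

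Granting this bound, the contour shift is justified by dominated convergence and yields
$$
|H_T(\ub, \vb)| \ll e^{-2\pi(\mathbf{t} \cdot \ub + \mathbf{s} \cdot \vb) + C^{\ast} \log G(T) (\|\mathbf{t}\|^2 + \|\mathbf{s}\|^2)}.
$$
Optimizing by setting $\mathbf{t} = \alpha \ub$ and $\mathbf{s} = \alpha \vb$ with $\alpha = \pi / (C^{\ast} \log G(T))$ produces the exponent $-\pi^2(\|\ub\|^2 + \|\vb\|^2)/(C^{\ast} \log G(T))$, matching the statement provided $\lambda < \pi^2 / C^{\ast}$. The explicit constraint $\lambda < (24 J \xi_{\max})^{-1}$ emerges from a careful bookkeeping of the diagonal versus off-diagonal contributions in the double sum $\sum_{j,k}(t_j - i s_j)(t_k + i s_k) \sum_p \beta_{L_j}(p)\overline{\beta_{L_k}(p)}/p^{2\sigma}$ (absorbing cross terms via Cauchy--Schwarz), and of the higher cumulants in the prime-by-prime expansion. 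The main obstacle is step two --- obtaining the polynomial-in-$(\xb, \yb)$ decay bound uniformly as $(\mathbf{t}, \mathbf{s})$ varies in a fixed complex neighborhood of the origin, and combining it cleanly with the exponential-moment factor so that the subsequent optimization is valid and yields the stated range of $\lambda$.
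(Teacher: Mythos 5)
Your approach is genuinely different from the paper's. The paper works in physical space: it writes $H_T = H_{q,T} * H_{>q,T}$, uses that $H_{q,T}$ (the density of $\Lv_q$, coming from primes $p\le q$) is uniformly bounded and compactly supported, and then bounds the tail measure $\Phrqtail(B_q(\ub,\vb))$ by a \emph{quadratic} exponential Markov inequality, i.e.\ by $e^{-\lambda\|(\ub,\vb)\|^2/\log G(T)}\cdot \mathbb{E}\big[e^{4\lambda\|\Lv_{>q}\|^2/\log G(T)}\big]$, with the latter exponential moment controlled by H\"older plus the Jessen/Parseval inequality (giving the $6J$, then $24J$ after the geometric factor $4$). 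You instead propose the Fourier-dual route: a complex contour shift in the inversion integral, a saddle-point optimization $\mathbf{t}=\alpha\ub$, and a linear exponential moment (MGF) bound obtained per prime by expanding $\log\varphi_{p,\sigma}$. Both schemes split the primes into a finite head (boundedness/decay) and an infinite tail (Gaussian concentration), so the underlying intuition is parallel, but the technical machinery is not.

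There is, however, a real gap in the range of $(\ub,\vb)$. Your optimization sets $\mathbf{t}=\pi\ub/(C^\ast\log G(T))$, but you only propose to establish the combined decay/MGF bound for shifts $\|(\mathbf{t},\mathbf{s})\|\le c_0$ ("a fixed complex neighborhood of the origin"). For $\|(\ub,\vb)\|\gg \log G(T)$ the optimal shift violates this constraint, and saturating at $\|\mathbf{t}\|=c_0$ produces an exponent linear in $\|\ub\|$, namely $-2\pi c_0\|\ub\|+O(\log G(T))$, which is strictly weaker than the claimed Gaussian decay $-\lambda\|(\ub,\vb)\|^2/\log G(T)$ in that regime. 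The lemma is stated — and used (e.g.\ in Proposition~\ref{MomentsLogR}, where one integrates over all of $\mathbb{R}^{2J}$) — for \emph{all} $(\ub,\vb)$, so this regime cannot be dropped. The paper's quadratic exponential moment sidesteps the issue entirely: finiteness of $\mathbb{E}\big[e^{c\|\Lv_{>q}\|^2/\log G(T)}\big]$ gives the Gaussian tail uniformly, with no optimization or bounded-shift restriction. To repair your plan you would either need to extend the shifted Bessel-type decay bound \eqref{Lem2.5} to unbounded imaginary shifts (unclear — the per-prime oscillatory integral need not retain its decay when the exponential weight $e^{-2\pi\mathbf{t}\cdot\re g_j}$ becomes large) or give a separate argument for the far-tail range. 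Finally, the explicit admissible range $\lambda<(24J\xi_{\max})^{-1}$ is asserted to "emerge from careful bookkeeping" but is never derived; as written, your plan does not verify that the contour-shift constant $C^\ast$ actually satisfies $\pi^2/C^\ast\ge(24J\xi_{\max})^{-1}$.
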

\begin{proof}
Let $q$ be a positive integer. By a standard convolution argument we have
\begin{align*}
 H_T ( \ub, \vb ) & = \int_{ \mathbb{R}^{2J}}   H_{q,T} ( \ub - \xb,  \vb - \yb ) d\Phrqtail (\xb,\yb) \\
 & = \int_{ \mathbb{R}^{2J}}   H_{q,T} ( \ub - \xb,  \vb - \yb ) H_{>q, T} ( \xb, \yb ) d\xb d\yb  
 \end{align*}
for $\ub, \vb \in \mathbb{R}^J$. Since 
$$
  | \Lv_q( \sigma , \X ) |^2   = \sum_{j=1}^J  | \log L_{j, q} ( \sigma , \X) |^2 
 \leq \sum_{j=1}^J \sum_{ p \leq q } \sum_{k=1}^\infty \frac{ | \beta_{L_j } ( p^k) | }{p^{k/2}} :=  R_q^2   ,
$$
  we have
$ H_{q,T} ( \xb, \yb ) = 0 $
for $ \sum_{j=1}^J ( x_j^2 + y_j ^2 ) >  R_q^2 $. Let 
$$ B_q (\ub, \vb)  := \{ ( \xb, \yb) \in \mathbb{R}^{2J} :         \sum_{j=1}^J (x_j - u_j)^2 + (y_j -v_j) ^2  \leq  R_q^2 \} $$
be the $2J$ dimensional ball of radius $R_q$ centered at $ (\ub, \vb ) $, then we see that
\begin{align*}
  H_T ( \ub, \vb )  &  = \int_{   B_q (\ub, \vb)  }   H_{q,T} ( \ub - \xb,  \vb - \yb )  H_{>q, T} ( \xb, \yb ) d\xb d\yb \\
  & \leq \Big(\sup_{(\xb, \yb)\in \mathbb{R}^{2J}}H_{q,T} ( \xb, \yb) \Big)  \Phrqtail (   B_q  (\ub, \vb) ).
 \end{align*}
Since the measure $ \tilde{\Phi}_{q,\sigma}^{\mathrm{rand}} ( \B) := \P [ \Lv_q ( \sigma, \X ) \in \B ] $ and its density function $ \tilde{H}_{q,\sigma}( \xb, \yb)$ depend continuously on $\sigma \geq 1/2 $, we see that
$$  \sup_{T \geq T_0} \sup_{(\xb, \yb)\in \mathbb{R}^{2J}}H_{q,T} ( \xb, \yb) \leq   M_q:=\sup_{ 1/2 \leq \sigma \leq 2/3} \sup_{ \xb, \yb \in \mathbb{R}^J } \tilde{H}_{q,\sigma} ( \xb, \yb) < \infty  $$
for a sufficiently large constant $T_0 > 0$.  
Hence, we deduce that
\be \label{rand inequality 1}
 H_T ( \ub, \vb )   \leq M_q  \Phrqtail (B_q  (\ub, \vb) ) . 
 \ee
 Thus, it remains to find an upper bound for $\Phrqtail ( B_q (\ub, \vb) ) $. 
 
 First, we remark that if $(\xb, \yb)\in  B_q (\ub, \vb)$ and $||(\ub, \vb)||_2 \geq 2R_q$ then 
\be\label{Lower_xbyb}
 ||(\xb, \yb)||_2 \geq \frac{1}{2}||(\ub, \vb)||_2 , 
 \ee
 since otherwise $ \frac12 || ( \ub, \vb)||_2  < || ( \ub, \vb  ) ||_2  - || ( \xb, \yb ) ||_2  \leq || ( \ub, \vb  ) -  ( \xb, \yb ) ||_2 \leq  R_q $ which contradicts our assumption. Let $\lambda< (24 J\max_{j\leq J} \xi_j)^{-1}$ be a positive real number. Then it follows from \eqref{Lower_xbyb} that for $(\ub, \vb)\in \mathbb{R}^{2J}$ such that $||(\ub, \vb)||_2 \geq 2R_q$ we have
 \be \label{rand inequality 3}\begin{split}
   \Phrqtail &(   B_q  (\ub, \vb) )  = \int_{ B_q  (\ub, \vb)  }   H_{>q,T} ( \xb, \yb ) d\xb d\yb \\
   & \leq  e^{  - \frac{\lambda}{\log G(T)}\sum_{j=1}^J ( u_j^2 + v_j^2 ) }  \int_{ B_q  (\ub, \vb)  }   e^{ \frac{4\lambda}{\log G(T)} \sum_{j=1}^J ( x_j^2 + y_j ^2 )}  H_{>q,T} ( \xb, \yb ) d\xb d\yb\\
   & \leq  e^{  - \frac{\lambda}{\log G(T)}\sum_{j=1}^J ( u_j^2 + v_j^2 ) }  \int_{  \mathbb{R}^{2J}   }   e^{ \frac{4\lambda}{\log G(T)} \sum_{j=1}^J ( x_j^2 + y_j ^2 )}  H_{>q,T} ( \xb, \yb ) d\xb d\yb.
   \end{split}  
   \ee
To complete the proof we shall establish that for any real number $   0<\lambda' <     ( 6J  \max_{j \leq J} \xi_j  )^{-1} $ we have  
\be \label{rand inequality 2}
\int_{ \mathbb{R}^{2J} } e^{  \frac{ \lambda' }{ \log G(T) } \sum_{j=1}^J (x_j^2 + y_j^2 ) }   H_{>q,T} ( \xb, \yb) d\xb d\yb    = O_{q, \lambda'}  (1)
\ee
 as $T \to \infty$. 
 Indeed, assuming \eqref{rand inequality 2} we obtain by \eqref{rand inequality 1} and \eqref{rand inequality 3} that 
 $$  H_T ( \ub, \vb )  \ll_q e^{  - \frac{\lambda}{\log G(T)}\sum_{j=1}^J ( u_j^2 + v_j^2 ) },$$
 for $||(\ub, \vb)||_2 \geq 2R_q$. Therefore, choosing $q$ to be large but fixed we deduce that for all $(\ub, \vb)\in \mathbb{R}^{2J}$ we have 
 $$H_T ( \ub, \vb )  \ll e^{  - \frac{\lambda}{\log G(T)}\sum_{j=1}^J ( u_j^2 + v_j^2 )}$$
where the implicit constant is absolute. 
 
We now proceed to establish \eqref{rand inequality 2}. Our proof is basically the same as the second part of the proof of Proposition 2.2 in  \cite{Le4}.
First, note that 
\begin{multline*}
 \int_{ \mathbb{R}^{2J} } e^{  \frac{ \lambda' }{ \log G(T) } \sum_{j=1}^J (x_j^2 + y_j^2 ) }   H_{>q,T} ( \xb, \yb) d\xb d\yb \\
  = \E \bigg[ \exp \bigg(    \frac{ \lambda' }{ \log G(T) } \sum_{j=1}^J \bigg|  \sum_{ p > q } \sum_{k=1}^\infty    \frac{   \beta_{L_j}(p^k) \X(p)^k }{p^{k\sigma}}   \bigg|^2 \bigg) \bigg] . 
 \end{multline*}
Since  by \eqref{prime sum bounds} we have
\begin{align*}
 \sum_{ p > q } \sum_{k=1}^\infty    \frac{   \beta_{L_j}(p^k) \X(p)^k }{p^{k\sigma}} & =  \sum_{ p  } \sum_{k=1}^\infty    \frac{   \beta_{L_j}(p^k) \X(p)^k }{p^{k\sigma}}  +O_q (1) \\
 & =      \sum_{ p    }    \frac{   \beta_{L_j}(p ) \X(p)  }{p^{ \sigma}} +    \sum_{ p    }   \frac{   \beta_{L_j}(p^2) \X(p)^2 }{p^{2\sigma}} +     O_q (1) ,   
 \end{align*}
 we see that 
\begin{align*}
& \E \bigg[ \exp \bigg(    \frac{ \lambda'}{ \log G(T) } \sum_{j=1}^J  \bigg|  \sum_{ p > q } \sum_{k=1}^\infty    \frac{   \beta_{L_j}(p^k) \X(p)^k }{p^{k\sigma}}   \bigg|^2 \bigg) \bigg]  \\
& \ll_q
\E \bigg[ \exp \bigg(    \frac{ 3 \lambda'}{ \log G(T) } \sum_{j=1}^J \bigg|    \sum_{ p    }    \frac{   \beta_{L_j}(p ) \X(p)  }{p^{ \sigma}}  \bigg|^2 + \frac{ 3\lambda'}{ \log G(T) } \sum_{j=1}^J  \bigg|    \sum_{ p    }   \frac{   \beta_{L_j}(p^2) \X(p)^2 }{p^{2\sigma}}   \bigg|^2   \bigg) \bigg] .
\end{align*}
By H\"{o}lder's inequality, the above is
\begin{align*}
\leq \prod_{j=1}^J \E \bigg[ \exp \bigg(    \frac{ 6J  \lambda'}{ \log G(T) }   \bigg|    \sum_{ p    }    \frac{   \beta_{L_j}(p ) \X(p)  }{p^{ \sigma}}  \bigg|^2    \bigg) \bigg]^{\frac1{2J}} \E \bigg[ \exp \bigg(      \frac{ 6J \lambda'}{ \log G(T) }  \bigg|    \sum_{ p    }   \frac{   \beta_{L_j}(p^2) \X(p)^2 }{p^{2\sigma}}   \bigg|^2   \bigg) \bigg]^{ \frac1{2J}} .
\end{align*}
By inequality (18.8) of \cite{J} (which is an easy application of Parseval's identity), the above is
$$ \leq  \prod_{j=1}^J  \bigg(  1-   \frac{ 6J  \lambda' }{ \log G(T) }  \sum_p  \frac{  | \beta_{L_j}(p ) |^2  }{p^{ 2  \sigma}}          \bigg)^{ - \frac1{2J}} \bigg(       1-   \frac{ 6J \lambda'}{ \log G(T) }  \sum_p  \frac{  | \beta_{L_j}(p^2  ) |^2  }{p^{ 4  \sigma}}         \bigg)^{ -\frac1{2J}} . $$
By \eqref{prime sum bounds}, we see that
$$ \sum_p  \frac{  | \beta_{L_j}(p^2  ) |^2  }{p^{ 4  \sigma}}        \leq \sum_p  \frac{  | \beta_{L_j}(p^2  ) |^2  }{p^{ 2}}        < \infty.$$
Furthermore, it follows from \eqref{SOC2} that 
$$ \sum_p  \frac{  | \beta_{L_j}(p ) |^2  }{p^{ 2  \sigma}} = \xi_j \log G(T) +O(1) .$$
Hence, we obtain
$$
\int_{ \mathbb{R}^{2J} } e^{  \frac{ \lambda' }{ \log G(T) } \sum_{j \leq J} (x_j^2 + y_j^2 ) }   H_{>q,T} ( \xb, \yb) d\xb d\yb   \ll_q \prod_{j=1}^J  \bigg( 1 -    \frac{ 6J\lambda'  ( \xi_j \log G(T) + O(1) )}{\log G(T)} \bigg)^{- \frac1{2J}} 
   \ll_{q,  \lambda'  } 1
$$
  since $ 0< \lambda'  <   ( 6J  \max_{j \leq J} \xi_j  )^{-1} $. This completes the proof of \eqref{rand inequality 2} and hence the result.

\end{proof}
From the above lemma, we deduce the following proposition and Lemma \ref{ConcentrationRandom}.
\begin{proof}[Proof of Proposition \ref{MomentsLogR}]
By Lemma \ref{lemma H_T bound 1} we see that
$$
 \E  \bigg( \bigg| \log \bigg|\sum_{j=1}^J b_j L_j(\sigma, \X)\bigg|\bigg|^{2k} \bigg)  = \int_{ \mathbb{R}^{2J}}  \bigg| \log \bigg| \sum_{j =1}^J b_j e^{u_j + i v_j }  \bigg| \bigg|^{2k} H_T ( \u, \v)  d\u d \v . 
$$
Furthermore, it follows from Lemma 2.3 in \cite{Le4} that there exists a constant $C>0$ such that for any $M>0$ we have
\be \label{eqn Lemma 2.3 Le4}
   \int_{ \mathbb{R}^{2J}}  \bigg|  \log \bigg| \sum_{j =1}^J b_j e^{u_j + i v_j }  \bigg| \bigg|^{2k}  e^{  - \frac{1}{M}\sum_{j=1}^J( u_j ^2 + v_j ^2 ) }  d\u d \v \ll M^J  (Ck)^k   (M+ k )^k.
   \ee
Applying this result with $M=\log G(T)/\lambda $ completes the proof. 

\end{proof}

\begin{proof}[Proof of Lemma \ref{ConcentrationRandom}]

First, using that $H_T(\ub, \vb)$ is uniformly bounded in $\ub, \vb$ we obtain
\begin{equation}\label{Concentration1}
 \begin{aligned}
 &\pr\left(\Lv(\sigma, \X) \in [-M, M]^{2J} \ \text{ and } \ R< \bigg|\sum_{j=1}^J b_j L_j(\sigma, \X) \bigg|<R+\varepsilon\right) \\
 &\ll \int_{\substack{\ub\in [-M, M]^{J}, \vb \in [-M, M]^{J}\\ R<|\sum_{j=1}^J b_j e^{u_j+iv_j}|<R+\varepsilon}} d\ub d\vb   \ll M^{J} \int_{\substack{\ub\in [-M, M]^{J}, \vb \in [0, 2\pi]^{J}\\ R<|\sum_{j=1}^J b_j e^{u_j+iv_j}|<R+\varepsilon}} d\ub d\vb,\\
 \end{aligned}
 \end{equation}
where the last estimate is obtained by splitting the range of each $v_j$ into intervals of the form $[2k\pi, (2k+1)\pi]$ and using that $e^{iv_j}$ is periodic of period $2\pi$. 
By the change of variables $r_1=e^{u_1}$, the last integral in \eqref{Concentration1} equals 
\begin{equation}\label{Concentration2}
 \int_{[0 ,2\pi]^{J-1}}\int_{ [-M,M ]^{J-1}}  \left(\int_{\mathcal{R}_0 } \frac{dr_1}{r_1} dv_1\right) d u_2 \dots d u_Jd v_2 \dots d v_J, 
\end{equation}
where $\mathcal{R}_0 := \{ ( r_1 , v_1) \in [e^{-M}, e^M] \times[0, 2\pi] :  R<|b_1 r_1 e^{iv_1}+ \sum_{j=2}^J b_j e^{u_j+iv_j}|<R+\varepsilon \} $.
We shall now bound the inner integral by changing the polar coordinates $(r_1, v_1)$ to cartesian coordinates $x, y$, defined by $x=r_1 \cos(v_1)$ and $y=r_1 \sin(v_1)$. Let $Z= \sum_{j=2}^J \frac{b_j}{b_1} e^{u_j+iv_j}$.  
The set $\{(x, y)\in \mathbb{R}^2 : R_1 <|x+iy+Z|< R_2\}$ corresponds to the annulus of radii $R_1, R_2$ centered at $-Z$ with volume $\pi (R_2^2-R_1^2)$. 
Thus, we have 
\begin{multline*}
\int_{\mathcal{R}_0 } \frac{dr_1}{r_1} dv_1
 = \int_{\substack{e^{-M}\leq \sqrt{x^2+y^2}\leq e^M \\ R/|b_1|<|x+iy +Z|<(R+\varepsilon)/|b_1|}} \frac{dxdy}{x^2+y^2}\\
 \leq e^{2M} \int_{R/|b_1|<|x+iy +Z|<(R+\varepsilon)/|b_1|} dxdy  = \pi e^{2M}\frac{(R+\varepsilon)^2-R^2}{|b_1|^2} \ll e^{2M} (R\varepsilon+ \varepsilon^2) .
\end{multline*}
  Inserting this estimate in \eqref{Concentration2} and combining it with  \eqref{Concentration1} we deduce
$$ 
\pr\left(\Lv(\sigma, \X) \in [-M, M]^{2J} \ \text{ and } \ R<\bigg|\sum_{j=1}^J b_j L_j(\sigma, \X)\bigg|<R+\varepsilon\right) \ll M^{2J-1} e^{2M} (R\varepsilon+ \varepsilon^2).
$$

\end{proof}

    
  \subsection{Asymptotic formulas for  $\Phrhat ( \xb, \yb )$ and $H_T(\ub, \vb)$}

In order to prove Theorem \ref{MainRandom} we need an asymptotic formula for the density function $H_T(\ub, \vb)$ that is valid for a certain set of $(\ub, \vb)$. To this end we prove the following result.
\begin{lem}\label{lem asymp Phrhat}
Let $ \xi_{\mathrm{min}} =\min_{j \leq J} \xi_j $. Then we have
\be \label{lem asymp Phrhat eqn 1}  
 |  \Phrhat (\xb , \yb )|    \leq e^{  -     \pi^2 \xi_{\mathrm{min}}  ( ||\xb||_2^2 + ||\yb||_2^2 ) (  \frac12  \log  G(T) + O(1)) } 
  \ee
for $  ||\xb ||_2^2 + || \yb ||_2^2  \leq   e^{2 \sqrt{G(T)}}$. Moreover, 
there exists a constant $c_4>0$ such that
\be \label{lem asymp Phrhat eqn 2}
 \Phrhat ( \xb, \yb ) =  e^{ \mathcal{B}_{2, \sigma} (\zb)  } \bigg( 1 +    \sum_{m=3}^5  \mathcal{B}_{m, \sigma} (\zb )  + O  ( || \zb ||_2^6 )\bigg) 
\ee
holds for 
$$ \zb := \xb + i \yb = ( x_1 + i y_1 , \ldots, x_J + i y_J ) \in \mathbb{C}^J $$
 and $ || \zb ||_2 \leq c_4 $, where each $\mathcal{B}_{m, \sigma}(\zb)$ is a homogeneous polynomial in $\zb$ and $\overline{\zb}$ of degree $m$,   
\be \label{lem asymp Phrhat eqn 3} \begin{split}
 \mathcal{B}_{2,\sigma}(\zb) =&  -  \pi^2   \log G(T)   \sum_{j=1}^J \xi_j  (x_j^2 + y_j^2 )  \\
 & + \sum_{j_1, j_2 \leq J } \bigg( C_{j_1, j_2  }  + O \bigg( \frac{\log G(T)}{ G(T)} \bigg) \bigg)   (x_{j_1 } - i y_{j_1} )( x_{j_2 } + i y_{j_2 })    , 
\end{split} \ee
for some constants $ C_{j_1, j_2 }$ and 
\be \label{lem asymp Phrhat eqn 4}
 \mathcal{B}_{m, \sigma} (\zb)  =  \mathcal{B}_{m, 1/2}(\zb) + O \bigg(   \frac{ || \zb||_2^m}{G(T)} \bigg) 
 \ee
for $ m = 3, 4, 5$.  
\end{lem}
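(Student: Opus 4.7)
The starting point is the factorization $\Phrhat(\xb,\yb) = \prod_p \varphi_{p,\sigma}(\xb,\yb)$ from \eqref{CharShortNot}, where each local factor is given by \eqref{DefPSI}. Writing $\zb = \xb + i\yb$ and $\mathbf{g}(p) = (g_j(\X(p)p^{-\sigma}))_{j\leq J}$, the exponent equals $i\pi(\bar{\zb}\cdot \mathbf{g}(p) + \zb \cdot \overline{\mathbf{g}(p)})$. Expanding the exponential and using $\ex[\X(p)^m] = \delta_{m,0}$, the first-moment term vanishes, and because $\ex[(\bar\zb\cdot \mathbf{g})^2] = \ex[(\zb\cdot\overline{\mathbf{g}})^2] = 0$ the entire second-moment contribution comes from the mixed term $\ex[(\bar\zb\cdot\mathbf{g})(\zb\cdot\overline{\mathbf{g}})]$. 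This yields
$$
\varphi_{p,\sigma}(\xb,\yb) = 1 - \pi^2 \sum_{j_1,j_2\leq J} \bar{z}_{j_1} z_{j_2}\, \alpha_{j_1,j_2}(p) + O\bigl(||\zb||^3/p^{3\sigma}\bigr),
$$
with $\alpha_{j_1,j_2}(p) := \sum_{k\geq 1} \beta_{L_{j_1}}(p^k)\overline{\beta_{L_{j_2}}(p^k)}/p^{2k\sigma}$. The matrix $(\alpha_{j_1,j_2}(p))$ is positive semi-definite Hermitian, and in particular $\sum_{j_1,j_2}\bar z_{j_1}z_{j_2}\alpha_{j_1,j_2}(p) \geq p^{-2\sigma}|\sum_j \bar z_j\beta_{L_j}(p)|^2$.

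For the asymptotic \eqref{lem asymp Phrhat eqn 2}, the expansion above is uniformly valid in $p$ once $||\zb|| \leq c_4$ for a small constant $c_4$. I would expand $\log\varphi_{p,\sigma}$ to degree $5$, group terms by total degree in $(\zb,\bar\zb)$, and sum over $p$. The degree-$2$ sum is handled by \eqref{SOC2}: the diagonal isolates $-\pi^2 \log G(T)\sum_j \xi_j(x_j^2+y_j^2)$, while the off-diagonal contribution and the $O(1)$ constants from \eqref{SOC2} collapse into the $C_{j_1,j_2}$ appearing in \eqref{lem asymp Phrhat eqn 3}; the $O((\sigma-1/2)\log(1/(\sigma-1/2)))$ error in \eqref{SOC2} translates directly to the $O(\log G(T)/G(T))$ error there. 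For $m\in\{3,4,5\}$ the corresponding prime sums are absolutely convergent by \eqref{prime sum bounds}, and a derivative-in-$\sigma$ bound shows that the replacement $\sigma\mapsto 1/2$ costs only $O(1/G(T))$, yielding \eqref{lem asymp Phrhat eqn 4}. The degree-$\geq 6$ tail sums to $O(||\zb||^6)$ by the same absolute convergence.

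For the upper bound \eqref{lem asymp Phrhat eqn 1} over the much larger range $||\zb||^2 \leq e^{2\sqrt{G(T)}}$, I would set the cutoff $P := ||\zb||^{1/\sigma}$, which satisfies $P \leq \exp(1/(2\sigma-1))$ once $G(T)$ is large. For $p \leq P$, use the trivial bound $|\varphi_{p,\sigma}|\leq 1$. For $p > P$ one has $||\zb||/p^\sigma \leq 1$, so the quadratic approximation above (together with $\log(1+x) = x + O(x^2)$) gives
$$
\log|\varphi_{p,\sigma}(\xb,\yb)|^2 \leq -2\pi^2 \sum_{j_1,j_2} \bar z_{j_1} z_{j_2}\, \alpha_{j_1,j_2}(p) + O\bigl(||\zb||^3/p^{3\sigma}\bigr).
$$
Bounding the PSD form below by its $k=1$ piece and applying \eqref{SOCTail} with $Y=P$ yields
$$
\sum_{p>P} p^{-2\sigma}\Bigl|\sum_j \bar z_j \beta_{L_j}(p)\Bigr|^2 = \log\bigl(G(T)/\log P\bigr)\sum_j \xi_j|z_j|^2 + O(||\zb||^2).
$$
Since $\log P \leq \sqrt{G(T)}/\sigma$, we have $\log(G(T)/\log P) \geq \tfrac12\log G(T) + O(1)$; the cubic remainder sums to $O(||\zb||^{1/\sigma}/\log P) = o(||\zb||^2 \log G(T))$ and is absorbed. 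Using $\xi_j\geq \xi_{\mathrm{min}}$ and halving to pass from $\log|\Phrhat|^2$ to $\log|\Phrhat|$ then yields \eqref{lem asymp Phrhat eqn 1}.

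The main obstacle will be calibrating the cutoff $P$ in the last step: it must be large enough for the quadratic expansion of $\log\varphi_{p,\sigma}$ to be valid on $p>P$, yet small enough ($\log P \ll \sqrt{G(T)}$) that \eqref{SOCTail} still retains a constant fraction of the $\log G(T)$ factor present in \eqref{SOC2}. The choice $P=||\zb||^{1/\sigma}$ is essentially sharp for this double constraint, giving rise precisely to the prefactor $\tfrac12$ appearing in \eqref{lem asymp Phrhat eqn 1}.
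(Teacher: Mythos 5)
Your overall plan mirrors the paper's: factor $\Phrhat = \prod_p \varphi_{p,\sigma}$, identify the degree-two Hermitian form as the only surviving low-order contribution (since $A_{p,\sigma}(\kb,0)=A_{p,\sigma}(0,\lb)=0$ for $\kb,\lb\neq 0$), apply \eqref{SOC2} to the diagonal to extract $-\pi^2\log G(T)\sum_j\xi_j|z_j|^2$, and for the large-$\zb$ bound discard small primes trivially and use \eqref{SOCTail} on the tail to keep half the $\log G(T)$. The asymptotic expansion for $\|\zb\|\leq c_4$ and the $\sigma\mapsto 1/2$ replacement via absolute convergence for $m=3,4,5$ are also the same as the paper's.

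However, there is a genuine gap in the large-$\zb$ upper bound, and it lies precisely in the place you flagged as the ``main obstacle.'' You assert that for $p>P:=\|\zb\|^{1/\sigma}$ the expansion of $\log\varphi_{p,\sigma}$ is valid because $\|\zb\|/p^\sigma\leq 1$. But that is not the relevant smallness condition: the quantity controlling convergence of the power series defining $R_{p,\sigma}(\zb)$ is (essentially) $\|\zb\|\cdot\sup_j|g_j(\X(p)p^{-\sigma})|$, and by A1 one only has $|g_j(\X(p)p^{-\sigma})|\ll p^{\theta-\sigma}$ with $0\leq\theta<1/2$ allowed. So the condition one actually needs is $\|\zb\|\,p^{\theta-\sigma}\ll 1$, i.e.\ $p\gg \|\zb\|^{1/(\sigma-\theta)}\approx\|\zb\|^{1/(1/2-\theta)}$. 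For $\theta>0$ this is a strictly larger threshold than $\|\zb\|^{1/\sigma}$, so for primes in the gap between the two the bound $|R_{p,\sigma}(\zb)|\leq 1/2$ (needed to take the logarithm and to justify $\log(1+x)=x+O(x^2)$) is not guaranteed, and the inequality you write down fails. The paper handles this by cutting at $c_1 Y^{c(\epsilon)}$ with $Y=e^{\sqrt{G(T)}}\geq\|\zb\|$ and $c(\epsilon)=(1+\epsilon)/(1/2-\theta)$, which is built exactly to dominate $\|\zb\|^{1/(1/2-\theta)}$ (see \eqref{Apsigmaxy 1} and the lines around \eqref{Phrhat bound 1}).

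Two smaller remarks. First, the gap is repairable without changing the result: with any admissible cutoff $P'$ satisfying $\|\zb\|^{1/(1/2-\theta)}\ll P'\ll e^{C\sqrt{G(T)}}$ one still has $\log P'\asymp\sqrt{G(T)}$, so $\log\big(G(T)/\log P'\big)=\tfrac12\log G(T)+O(1)$ and the prefactor $\tfrac12$ survives. In particular your closing claim that $P=\|\zb\|^{1/\sigma}$ is ``essentially sharp'' and ``gives rise precisely to the prefactor $\tfrac12$'' is not right: the $\tfrac12$ is robust over the whole admissible window, and the paper's larger cutoff produces the same constant. Second, your PSD/$k=1$ lower bound for the quadratic form is a fine alternative to the paper's direct evaluation using \eqref{prime sum bounds}; both deliver $\tfrac12\log G(T)\sum_j\xi_j|z_j|^2+O(\|\zb\|^2)$, so that step is sound once the cutoff is corrected.
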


\begin{proof}
Recall from \eqref{CharShortNot} that 

\be\label{ProductChar}
\Phrhat (\xb , \yb )  = \prod_{p} \varphi_{p, \sigma} (\xb, \yb), 
\ee
where $\varphi_{p, \sigma} (\xb, \yb)$ is defined in \eqref{DefPSI}. Now, using \eqref{DefPSI} and expanding the exponential we obtain
\begin{align*}
\varphi_{p, \sigma} (\xb, \yb)&=\mathbb{E} \bigg[ \exp\bigg(2 \pi i  \sum_{j=1}^J  ( x_j \re ( g_j  (\X(p)p^{- \sigma} ) )    +     y_j \im  ( g_j  ( \X(p)     p^{- \sigma} ) )  ) \bigg)   \bigg]\\
&=  \sum_{ \kb , \lb \in (\mathbb{Z}_{ \geq 0 })^J }  \frac{  (\pi i )^{\mathcal{K}( \kb + \lb ) } \overline{\zb}^{ \kb }  \zb^{  \lb } }{\kb! \lb!} \E \bigg[  \prod_{j=1}^J  g_j \bigg( \frac{\X(p) }{p^{\sigma}} \bigg)^{k_j }  \overline{ g_j \bigg( \frac{\X(p) }{p^{\sigma}}} \bigg)^{\ell_j } \bigg],
\end{align*}
where for $ \kb = ( k_1 , \ldots, k_J ) \in (\mathbb{Z}_{ \geq 0 })^J$, we define $\mathcal{K} ( \kb) := k_1 + \cdots + k_J $, $ \kb! :=k_1 ! \cdots k_J ! $, 
$\zb = \xb + i \yb   $,
 $\overline{\zb} = \xb - i \yb $ and $ \zb^\kb :=   z_1^{k_1 }  \cdots  z_J ^{ k_J} $. Let
$$ A_{p, \sigma } (\kb,\lb) :=  \E \bigg[  \prod_{j=1}^J  g_j \bigg( \frac{\X(p) }{p^{\sigma}} \bigg)^{k_j }  \overline{ g_j \bigg( \frac{\X(p) }{p^{\sigma}}} \bigg)^{\ell_j } \bigg].$$
Since $ A_{p, \sigma}(0,0)= 1$, and $ A_{p, \sigma}(0, \kb)= A_{p,\sigma}(\kb, 0)= 0 $ for $ \kb \neq 0 $, we deduce that 
\be\label{PSIA}
 \varphi_{p, \sigma} (\xb, \yb)= 1+ R_{p, \sigma}(\zb),
 \ee
where 
$$ R_{p, \sigma}(\zb) := \sum_{\kb \neq 0}  \sum_{\lb \neq 0 }    \frac{  (\pi i )^{\mathcal{K}( \kb + \lb ) } \overline{\zb}^{ \kb }  \zb^{  \lb } }{\kb! \lb!} A_{p,\sigma } (\kb,\lb).$$  
We now proceed to bound the sum $ R_{p, \sigma}(\zb)$ in a certain range of $\zb$ and $p$. By \eqref{beta bound 1} and \eqref{beta bound 2}, we see that
$$   g_j \bigg(  \frac{\X(p)}{p^{\sigma}} \bigg)   = \frac{ \beta_{L_j}(p)\X(p) }{p^{\sigma}} + O \bigg(  \frac{   \sum_{i=1}^d | \alpha_{j,i} (p) |^2 }{p^{2 \sigma}} \bigg)   = O \bigg( \frac{ 1 }{p^{  \sigma}} \sqrt{  \sum_{i=1}^d | \alpha_{j,i} (p) |^2} \bigg) = O \bigg(   \frac{1}{p^{1/2-\theta}} \bigg)   . 
$$
Hence, there exists a constant $ c_0 >0$ such that both 
\be \label{gj estimate} \begin{split}
\bigg|  g_j \bigg(  \frac{\X(p)}{p^{\sigma}} \bigg)   \bigg|& \leq c_0  \frac{ 1 }{p^{  \sigma}} \sqrt{  \sum_{i=1}^d | \alpha_{j,i} (p) |^2} ,\\
\bigg|  g_j \bigg(  \frac{\X(p)}{p^{\sigma}} \bigg)   \bigg| & \leq c_0   \frac{1}{p^{1/2-\theta}} 
\end{split} \ee
hold for every prime $p$ and every $ j \leq J$. Thus, we obtain
\be \label{Apsigmaxy 1} \begin{split}
 |R_{p, \sigma}(\zb) | &\ll   \sum_{\kb \neq 0}  \sum_{\lb \neq 0 }   \frac{1}{\kb! \lb!}     \prod_{j=1}^J  \bigg(\frac{ c_0 \pi ||\zb||_2  }{p^{1/2 }} \sqrt{  \sum_{i=1}^d | \alpha_{j,i} (p) |^2}      \bigg)^{k_j + \ell_j  }   \\
 & \ll     \frac{  || \zb||_2^2 }{ p } \sum_{j=1}^J   \sum_{i=1}^d | \alpha_{j,i} (p) |^2 \ll     \frac{  || \zb||_2^2 }{ p^{ 1- 2 \theta}}
\end{split}\ee 
provided that $ || \zb ||_2 \ll p^{1/2-\theta}$.
This implies that in the range $||\zb ||_2\leq Y $ with $ Y :=\exp (   \sqrt{ G(T)} ) $, there is a constant $c_1 > 0 $ such that
$
|R_{p, \sigma}(\zb) | \leq \frac12
$
  holds for all primes  $ p \geq  c_1 Y^{1/(1/2-\theta)} $.
Therefore, using that $|\varphi_{p, \sigma} (\xb, \yb)|\leq 1$ for all primes $p$, together with \eqref{ProductChar} and \eqref{PSIA} we have
\be \label{Phrhat bound 1} \begin{split}
 |  \Phrhat (\xb , \yb )|  & \leq \prod_{ p \geq c_1 Y^{c(\epsilon)}   }  |1+ R_{p, \sigma}(\zb) |  \\
 &   \leq \bigg| \exp \bigg( \sum_{ p \geq c_1  Y^{c(\epsilon)} } R_{p, \sigma}(\zb) + O \big(  \sum_{ p \geq c_1    Y^{c(\epsilon)}   }| R_{p, \sigma}(\zb)|^2  \big) \bigg) \bigg|,  
\end{split} \ee
 for $  ||\zb ||_2   \leq Y$ and any $\epsilon>0 $ fixed, where 
 $$ c(\epsilon):= (1+\epsilon)/(1/2-\theta) . $$

The second $p$-sum in \eqref{Phrhat bound 1} is
$$
\sum_{ p \geq c_1  Y^{c(\epsilon)}   }| R_{p, \sigma}(\zb)|^2   
  \ll ||\zb ||_2^4 \sum_{ p \geq c_1   Y^{c(\epsilon)}  }\frac{1}{p^{2-2\theta}}  \sum_{j=1}^J \sum_{i =1}^d  | \alpha_{j,i}(p)|^2  \ll  ||\zb ||_2^4  Y^{-2-\epsilon}   \ll    ||\zb ||_2^2 Y^{ -\epsilon} 
$$
for $   ||\zb ||_2   \leq Y $ and any $\epsilon>0$ fixed by   \eqref{Apsigmaxy 1}, assumption A3 and partial summation.
The first $p$-sum in \eqref{Phrhat bound 1} is
\begin{align*}
 \sum_{ p \geq c_1   Y^{c(\epsilon)}  } R_{p, \sigma}(\zb) = &  \sum_{ p \geq c_1   Y^{c(\epsilon)}  }  \sum_{\kb \neq 0}  \sum_{\lb \neq 0 }   \frac{(\pi i )^{\mathcal{K}( \kb + \lb ) } \overline{\zb}^{ \kb }  \zb^{  \lb } }{\kb! \lb!} A_{p, \sigma}(\kb, \lb) \\
 =& - \pi^2 \sum_{j_1, j_2 \leq J} \overline{ z_{j_1}}  z_{j_2} \sum_{ p \geq c_1   Y^{c(\epsilon)}  } \E \bigg[    g_{j_1} \bigg( \frac{\X(p) }{p^{\sigma}} \bigg)  \overline{  g_{j_2} \bigg( \frac{\X(p) }{p^{\sigma} } \bigg) }  \bigg] \\
 & + O \bigg(  \sum_{ p \geq c_1   Y^{c(\epsilon)}   } \sumstar_{  \kb, \lb      }   \frac{   (\pi ||z||_2)^{\mathcal{K}( \kb + \lb ) } }{\kb! \lb!} |A_{p, \sigma}(\kb, \lb)   | \bigg) ,
 \end{align*}
 where the $*$-sum is over $\kb, \lb \in ( \mathbb{Z}_{\geq 0 } )^J$ with $ \kb \neq 0 $, $\lb \neq 0 $ and $ \mathcal{K}(\kb + \lb ) \geq 3 $.
By \eqref{gj estimate}, assumption A3 and partial summation, the above $O$-term is 
\begin{align*}
 &  \ll   \sumstar_{  \kb, \lb   }   \frac{   (c_0 \pi ||\zb||_2 )^{\mathcal{K}( \kb + \lb ) } }{\kb! \lb!}\sum_{ p \geq c_1  Y^{c(\epsilon)}  } \frac{1}{ p^{ 1 + ( \mathcal{K}(\kb+\lb)-2 )(1/2-\theta)}  }  \sum_{j=1}^J  \sum_{i =1}^d |\alpha_{j,i}(p)|^2  \\
 & \ll   \sumstar_{  \kb, \lb    }   \frac{  (c_0 \pi ||\zb||_2 )^{\mathcal{K}( \kb + \lb )}   }{\kb! \lb!} Y^{ 2 - \mathcal{K}(\kb + \lb) - \epsilon /2   } 
 \ll Y^{-1 - \epsilon/2} ||\zb||_2^3  \leq    Y^{-\epsilon/2 } ||\zb||_2^2  
 \end{align*}
for $ ||\zb ||_2 \leq Y $. Thus we derive
\begin{multline*}
 |  \Phrhat (\xb , \yb )|  \\
    \leq \bigg| \exp \bigg(   - \pi^2 \sum_{j_1, j_2 \leq J} \overline{ z_{j_1}}  z_{j_2}  \sum_{ p \geq c_1  Y^{c(\epsilon)}   }   \E \bigg[     g_{j_1} \bigg( \frac{\X(p) }{p^{\sigma}} \bigg)  \overline{  g_{j_2} \bigg( \frac{\X(p) }{p^{\sigma} } \bigg) } \bigg]   + O \bigg(   Y^{-\epsilon/2 } ||\zb||_2^2  \bigg)  \bigg) \bigg|   
\end{multline*}
 for $   ||\zb||_2   \leq Y $ and for any $\epsilon>0$ fixed. 
 Moreover, by \eqref{prime sum bounds} and \eqref{SOCTail}, the sum above equals
\begin{align*}
   & \sum_{j_1, j_2 \leq J}  \overline{ z_{j_1}}  z_{j_2}  \sum_{ p \geq c_1   Y^{c(\epsilon)}  }    \sum_{k=1}^\infty   \frac{  \beta_{L_{j_1}}(p^k)    \overline{  \beta_{L_{j_2}}(p^k)} }{p^{2k\sigma}}  \\
  = & \sum_{j_1, j_2 \leq J}  \overline{ z_{j_1}}  z_{j_2} \sum_{ p \geq c_1   Y^{c(\epsilon)}  }       \frac{  \beta_{L_{j_1}}(p)  \overline{ \beta_{L_{j_2}}(p)}  }{p^{2 \sigma}}  +O \big( ||\zb||_2^2  \big)   
  =  \frac12  \log G(T)\sum_{j=1}^J \xi_j |z_j |^2  + O \big( ||\zb||_2^2  \big) .
\end{align*}
  Therefore, we deduce that
\be \label{Phrhat bound 3} \begin{split}
 |  \Phrhat (\xb , \yb )|  &   \leq   e^{   -  \left( \frac{\pi^2 }{2}   \log G(T) +O(1)\right) \sum_{j=1}^J  \xi_j |z_j|^2     } \leq e^{  -     \pi^2 \xi_{\mathrm{min}}  ||\zb||_2^2 ( \frac12 \log G(T) + O(1)) } 
\end{split} \ee
for $  ||\zb ||_2 \leq Y  $ where $ Y = e^{\sqrt{G(T)}}$. This proves \eqref{lem asymp Phrhat eqn 1}.

Next we find an asymptotic formula for $ \Phrhat $. By \eqref{Apsigmaxy 1}, there is a constant $c_4>0$ such that 
$ |R_{p, \sigma}(\zb) | \leq \frac12 $
for $ ||\zb ||_2 \leq c_4 $ and for every prime $p$.
Hence, it follows from  \eqref{ProductChar} and \eqref{PSIA} that
$$
\Phrhat ( \xb, \yb ) =   \exp \bigg(  \sum_p R_{p, \sigma}(\zb)  - \frac12 \sum_p R_{p, \sigma}(\zb)^2 + O \bigg( \sum_p |R_{p, \sigma}(\zb)|^3 \bigg)  \bigg).
$$
The $O$-term above is 
$$
  \ll ||\zb||_2^6 \sum_p   \frac{1}{p^{1+ 2(1 - 2 \theta) }}  \sum_{j=1}^J \sum_{i =1}^d  | \alpha_{j,i}(p)|^2 \ll  ||\zb||_2^6   
$$
for $||\zb||_2     \leq c_4$  by \eqref{Apsigmaxy 1}, assumption A3  and  partial summation. 
We observe that  the sum $\sum_p R_{p, \sigma}(\zb)  - \frac12 \sum_p R_{p, \sigma}(\zb)^2 $ has a power series representation in $z_1 , \ldots, z_J , \bar{z}_1 , \ldots , \bar{z}_J $ without a constant term. Let $ B_{\sigma}(\kb, \lb)$ be the coefficient of $\overline{\zb}^{\kb} \zb^{\lb}$ in this sum. Then we have
\begin{multline*}
  \sum_{ \kb \neq 0 }  \sum_{ \lb  \neq 0 } B_{\sigma}(\kb, \lb)\overline{\zb}^{\kb} \zb^{\lb} 
   =  \sum_{\kb \neq 0}  \sum_{\lb \neq 0 }   \frac{  (\pi i )^{\mathcal{K}( \kb + \lb ) } \overline{\zb}^{ \kb }  \zb^{  \lb } }{\kb! \lb!}  \sum_p A_{p, \sigma}(\kb, \lb)\\
    - \frac12  \sum_{\kb_1, \kb_2 \neq 0}  \sum_{\lb_1, \lb_2 \neq 0 }   \frac{  (\pi i )^{\mathcal{K}( \kb_1 + \lb_1  + \kb_2+ \lb_2  ) } \overline{\zb}^{ \kb_1+ \kb_2 }  \zb^{  \lb_1 + \lb_2 } }{\kb_1! \kb_2!\lb_1!\lb_2!} \sum_p  A_{p, \sigma} (\kb_1, \lb_1) A_{p, \sigma} (\kb_2, \lb_2).
  \end{multline*}
Therefore, if $\mathcal{K} ( \kb + \lb ) = 2 $ or $ 3$, then 
 $$ B_\sigma (\kb, \lb )    =   \frac{  (\pi i )^{\mathcal{K}( \kb + \lb ) } }{ \kb! \lb!}  \sum_p A_{p, \sigma}(\kb, \lb),$$
while in the case $\mathcal{K} ( \kb+\lb )\geq 4$, we have
 $$ B_{\sigma}(\kb, \lb)   =  \frac{(\pi i )^{\mathcal{K}( \kb + \lb ) } }{ \kb! \lb!} \sum_p A_{p, \sigma}(\kb, \lb) -  \frac{1}{2}   \sum_{ \substack{ \kb_1 , \kb_2 \neq 0 \\ \kb_1+\kb_2= \kb } } \sum_{ \substack{ \lb_1, \lb_2\neq 0  \\ \lb_1 + \lb_2 = \lb}  }  \frac{  (\pi i )^{\mathcal{K}( \kb + \lb    ) }}{\kb_1! \kb_2!\lb_1!\lb_2!} \sum_p  A_{p, \sigma} (\kb_1, \lb_1)A_{p, \sigma} (\kb_2, \lb_2)   $$
For $\mathcal{K} ( \kb+ \lb ) \geq 4$, we have
\begin{align*}
   B_{\sigma}& (\kb, \lb)   \ll     \frac{\pi^{\mathcal{K}( \kb + \lb ) } }{ \kb! \lb!} \sum_p | A_{p, \sigma}(\kb, \lb) | +   \sum_{ \substack{ \kb_1 , \kb_2 \neq 0 \\ \kb_1+\kb_2= \kb } } \sum_{ \substack{ \lb_1 , \lb_2 \neq 0  \\ \lb_1 + \lb_2 = \lb}  }  \frac{   \pi^{\mathcal{K}( \kb + \lb    ) }   }{\kb_1! \kb_2!\lb_1!\lb_2!} \sum_p  | A_{p, \sigma} (\kb_1, \lb_2) |  | A_{p, \sigma} (\kb_2, \lb_2) |\\
  & \ll  \frac{ (c_0  \pi)^{\mathcal{K}( \kb + \lb ) } }{ \kb! \lb!} \sum_p \frac{\max_{j} \sum_i |\alpha_{j, i }(p)|^2}{p^{1+ 2 ( 1/2-\theta) }} +    \sum_{ \substack{ \kb_1 , \kb_2 \neq 0 \\ \kb_1+\kb_2= \kb } } \sum_{ \substack{ \lb_1 , \lb_2 \neq 0  \\ \lb_1+ \lb_2 = \lb}  }  \frac{  (c_0 \pi)^{\mathcal{K}( \kb + \lb    ) }   }{\kb_1! \kb_2!\lb_1!\lb_2!} \sum_p \frac{\max_{j} \sum_i |\alpha_{j, i }(p)|^2}{p^{1+ 2 ( 1/2-\theta) }} \\
 & \ll   \frac{  c_5  ^{\mathcal{K}( \kb + \lb ) } }{ \kb! \lb!},
   \end{align*}
for some constant $c_5>0$, where the implicit constant is independent of $\kb$ and $\lb$. 
Hence, we deduce that
\begin{align*}
 \sum_{ \substack{ \kb , \lb  \neq 0 \\  \mathcal{K}( \kb+ \lb ) \geq 6   }  } B_{\sigma}(\kb, \lb) \overline{\zb}^{\kb} \zb^{\lb} & \ll  \sum_{   \mathcal{K}( \kb+ \lb ) \geq 6    } \frac{ (c_5 || \zb ||_2 )^{\mathcal{K}( \kb + \lb ) } }{ \kb! \lb!} \ll ||\zb||_2^6
\end{align*}
for $|| \zb ||_2 \leq c_4 $. Therefore, we obtain
\be  \label{Phrhat asymp 1} 
\Phrhat ( \xb, \yb ) =   \exp \bigg( \sum_{m=2}^5  \mathcal{B}_{m, \sigma} (\zb )  + O  ( || \zb ||_2^6 ) \bigg)
 \ee
for $|| \zb ||_2 \leq c_4 $, where
$$ \B_{m, \sigma} (\zb )  :=  \sum_{ \substack{ \kb , \lb  \neq 0 \\  \mathcal{K}( \kb+ \lb ) = m    }  } B_{\sigma}(\kb, \lb)  \overline{\zb}^{\kb} \zb^{\lb}  $$ 
is a homogeneous polynomial in $z_1 , \ldots, z_J, \overline{z_1} , \ldots, \overline{z_J} $ of degree $m$.

For each $ \kb, \lb \neq 0 $ satisfying $ \mathcal{K} ( \kb + \lb ) = 3, 4, 5 $,  $ B_{\sigma}(\kb, \lb) $ is a Dirichlet series absolutely convergent for $\re( s) > 1/2-  \epsilon$ for some $\epsilon>0$. Thus, each coefficient $ B_{\sigma}(\kb, \lb) $ in such case satisfies 
$$ B_{\sigma}(\kb, \lb) = B_{1/2}(\kb, \lb) + O(1/G(T)) , $$
which proves \eqref{lem asymp Phrhat eqn 4}.
This also implies that
\begin{equation} \label{B m sigma zb bound}
 \mathcal{B}_{m, \sigma} (\zb )  = O(  || \zb ||_2^m )
 \end{equation}
for $ m = 3, 4, 5$ and for $ || \zb ||_2 \leq c_4 $. 
On the other hand, when $ m=2 $, we see that
\be \label{B2 sum}
  \mathcal{B}_{2, \sigma} (\zb)  = \sum_{j_1, j_2 \leq J}  g_{j_1, j_2 } ( \sigma )   \overline{ z_{j_1}}  z_{j_2}, 
  \ee
 where
\begin{align*}
g_{j_1, j_2 } ( \sigma ) : =  & - \pi^2  \sum_p     \E \bigg[     g_{j_1} \bigg( \frac{\X(p) }{p^{\sigma}} \bigg) \overline{  g_{j_2} \bigg( \frac{\X(p) }{p^{\sigma}} \bigg) } \bigg] \\
  = &- \pi^2   \sum_p    \sum_{k=1}^\infty   \frac{   \beta_{L_{j_1}}(p^k)  \overline{\beta_{L_{j_2}}(p^k)}     }{p^{2k\sigma}}  \\
  = &  - \pi^2   \sum_p       \frac{ \beta_{L_{j_1}}(p)  \overline{   \beta_{L_{j_2}}(p) }  }{p^{2 \sigma}}  - \pi^2   \sum_p    \sum_{k=2}^\infty   \frac{   \beta_{L_{j_1}}(p^k)  \overline{\beta_{L_{j_2}}(p^k)}     }{p^{ k }}  + O \bigg(  \frac{1}{G(T)} \bigg).
\end{align*}
The second $p$-sum on the last line is convergent by \eqref{prime sum bounds}. Therefore, it follows from \eqref{SOC2} that
\be \label{B2 coeff 3}
g_{j_1, j_2 } ( \sigma) = - \pi^2 \delta_{j_1, j_2 } \xi_{j_1} \log G(T) + C_{j_1, j_2} + O\bigg(  \frac{ \log G(T)}{G(T)} \bigg) 
\ee
for some constant $ C_{j_1, j_2 } $.
This proves \eqref{lem asymp Phrhat eqn 3}. 
By  \eqref{Phrhat asymp 1} and \eqref{B m sigma zb bound} we see that
$$ \Phrhat ( \xb, \yb ) =  e^{ \mathcal{B}_{2, \sigma} (\zb)  } \bigg( 1 +    \sum_{m=3}^5  \mathcal{B}_{m, \sigma} (\zb )  + O  ( || \zb ||_2^6 )\bigg) $$
holds for $|| \zb ||_2 \leq c_4 $. This proves \eqref{lem asymp Phrhat eqn 2} and hence completes the proof.

\end{proof}

Using Lemma \ref{lem asymp Phrhat} we establish the following result, which is the key ingredient in the proof of Theorem \ref{MainRandom}. 

\begin{lem}\label{lem asymp H_T}
For all $\ub, \vb \in \mathbb{R}^J$ we have
 \begin{align*}
H_T ( \ub, \vb )   
= & \frac{1}{  ( \log G(T))^J} e^{-\frac{1}{ \log G(T)} \sum_{j=1}^J \xi_j^{-1} ( u_j^2 + v_j^2 ) } 
P_T (\ub, \vb)   +   O \bigg(   \frac{1}{  ( \log G(T))^{J+3} }    \bigg),
\end{align*} 
where  
$$ P_T ( \ub, \vb ) = \frac{1   }{  \pi^{J} \prod_{j=1}^J \xi_j }   +  \sum_{m=2}^5   \sum_{r=0}^m   \frac{  Q_{r ,m  } ( \ub, \vb)  }{  ( \log G(T))^{(m+r)/2} } $$
is a polynomial in $\ub , \vb $ of degree $\leq 5 $ and  $ Q_{r, m  } ( \ub, \vb) $ is a homogeneous polynomial in $\ub, \vb$ of degree $r$ for $ r \leq m \leq 5 $.
  \end{lem}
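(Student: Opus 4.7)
The approach is Fourier inversion combined with the expansion of $\Phrhat$ from Lemma \ref{lem asymp Phrhat}. Since $\Phrhat \in L^1(\mathbb{R}^{2J})$ by Lemma \ref{lemma abs cont} (taking the exponent $A$ large), we have
\[
H_T(\ub,\vb) = \int_{\mathbb{R}^{2J}} \Phrhat(\xb,\yb)\, e^{-2\pi i(\xb \cdot \ub + \yb \cdot \vb)}\, d\xb\, d\yb.
\]
Write $L := \log G(T)$ and $\zb := \xb + i\yb$, and split the integration domain at $||\zb|| = \delta := (\log L)/\sqrt{L}$ and at $||\zb|| = e^{\sqrt{G(T)}}$. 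On the intermediate range $\delta < ||\zb|| \le e^{\sqrt{G(T)}}$, the bound \eqref{lem asymp Phrhat eqn 1} gives $|\Phrhat(\xb,\yb)| \le e^{-cL||\zb||^2}$, so this contribution is $\ll e^{-c(\log L)^2}$, which is much smaller than $L^{-J-3}$. On the far tail $||\zb|| > e^{\sqrt{G(T)}}$, Lemma \ref{lemma abs cont} with a large $A$ gives a contribution $\ll e^{-\sqrt{G(T)}}$, also negligible.

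On the small region $\{||\zb|| \le \delta\}$ I substitute \eqref{lem asymp Phrhat eqn 2} and decompose $\mathcal{B}_{2,\sigma}(\zb) = G(\zb) + q(\zb)$, where $G(\zb) := -\pi^2 L \sum_j \xi_j |z_j|^2$ is the dominant Gaussian part and $q(\zb) := \sum_{j_1, j_2} C_{j_1, j_2} \overline{z_{j_1}} z_{j_2}$ is the remaining bounded quadratic form (with $L$-independent coefficients up to an $O(L/G(T))$ perturbation coming from \eqref{lem asymp Phrhat eqn 3}). Since $q(\zb) = O(||\zb||^2)$ and $\mathcal{B}_{m,\sigma}(\zb) = O(||\zb||^m)$ for $m = 3, 4, 5$ are all small on $\{||\zb|| \le \delta\}$, I Taylor expand $e^{q(\zb)} = 1 + q + \tfrac12 q^2 + O(||\zb||^6)$ and multiply with $1 + \sum_{m=3}^5 \mathcal{B}_{m,\sigma}(\zb) + O(||\zb||^6)$; collecting monomials of total degree $\le 5$ in $\zb, \bar{\zb}$ gives
\[
\Phrhat(\xb,\yb) = e^{G(\zb)}\bigl[\mathcal{R}(\zb) + O(||\zb||^6)\bigr],
\]
where $\mathcal{R}(\zb)$ is a polynomial of degree $\le 5$ in $z_1, \dots, z_J, \bar{z}_1, \dots, \bar{z}_J$ whose coefficients depend on $T$ only through an $O(1/G(T))$ correction (using \eqref{lem asymp Phrhat eqn 4}). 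The $O(||\zb||^6)$ remainder integrates against $e^{G(\zb)}$ to at most $O(L^{-J-3})$ by the Gaussian moment estimate $\int_{\mathbb{R}^{2J}} ||\zb||^6 e^{G(\zb)}\, d\zb \ll L^{-J-3}$.

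Term-by-term Fourier inversion then uses the base formula
\[
\int_{\mathbb{R}^{2J}} e^{G(\zb)}\, e^{-2\pi i(\xb \cdot \ub + \yb \cdot \vb)}\, d\xb\, d\yb = \frac{e^{-\sum_j(u_j^2 + v_j^2)/(L\xi_j)}}{\pi^J L^J \prod_j \xi_j},
\]
together with the fact that multiplication by $x_j$ or $y_j$ in the integrand corresponds to the operator $(-2\pi i)^{-1}\partial_{u_j}$ or $(-2\pi i)^{-1}\partial_{v_j}$ on the integral. For a monomial $\zb^\alpha \bar{\zb}^\beta$ of total degree $m$, applying $m$ such derivatives to the base Gaussian produces the base Gaussian multiplied by a polynomial in $(\ub, \vb)$ of degree $\le m$ whose degree-$r$ homogeneous piece has coefficient of order $L^{-(m+r)/2}$: each derivative either brings down a factor $-2u_j/(L\xi_j)$ (or its $v$-analogue), raising the polynomial degree by one and contributing a $1/L$, or differentiates a previously produced $u_j$ or $v_j$, lowering the degree by one with no extra $L$-factor. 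The constant term of $\mathcal{R}$ produces the leading $(\pi^J L^J \prod_j \xi_j)^{-1}$, and the degree-$m$ pieces of $\mathcal{R}$ for $m \in \{2, 3, 4, 5\}$ assemble into the polynomials $Q_{r,m}(\ub,\vb)/L^{(m+r)/2}$ of the required form.

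The main obstacle is obtaining the error $O(L^{-J-3})$ \emph{uniformly} in $(\ub, \vb)$, since the polynomials $Q_{r,m}(\ub,\vb)$ grow without bound. For $\sum_j(u_j^2 + v_j^2) \ge CL\log L$ with $C$ sufficiently large, the Gaussian factor renders the claimed main term $\ll L^{-J-3}$, while Lemma \ref{lemma H_T bound 1} gives $H_T(\ub,\vb) \ll L^{-J-3}$ directly, so the asymptotic formula holds trivially there; in the complementary range $\sum_j(u_j^2 + v_j^2) < CL\log L$ the term-by-term expansion above yields the formula. Careful bookkeeping of the $L$-scaling of each Fourier-inverted monomial, combined with verifying that the truncation errors from each of the three regions and from the polynomial expansion of $e^{q(\zb)}\bigl(1 + \sum \mathcal{B}_{m,\sigma}\bigr)$ all sum to $O(L^{-J-3})$, is the technical heart of the argument.
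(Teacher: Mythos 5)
Your plan is correct and follows essentially the same architecture as the paper: Fourier-invert $H_T$ from $\Phrhat$, truncate to a small ball near $\zb=0$ using Lemmas \ref{lemma abs cont} and \ref{lem asymp Phrhat} to dismiss the tails, substitute the expansion \eqref{lem asymp Phrhat eqn 2}, factor off the dominant Gaussian $e^{-\pi^2\log G(T)\sum_j\xi_j|z_j|^2}$, Taylor-expand the bounded quadratic remainder $e^{\mathcal{B}'_{2,\sigma}}$, and invert the resulting polynomial times Gaussian term by term. Two cosmetic differences from the paper: you truncate at a shrinking radius $\delta=(\log L)/\sqrt L$ while the paper uses the fixed radius $c_4$ (both are fine, since in either case the Gaussian tail beyond the cutoff is exponentially small in $L$ uniformly in $(\ub,\vb)$); and you invert the polynomial pieces by the differentiation rule $x_j\leftrightarrow(-2\pi i)^{-1}\partial_{u_j}$, while the paper completes the square and shifts the contour, yielding identical $L$-scalings $Q_{r,m}/L^{(m+r)/2}$. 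Your worry about uniformity in $(\ub,\vb)$ is a red herring: once the $O(\|\zb\|^6)$ remainder and the coefficient perturbations of size $O(L/G(T))$ are integrated against $|e^{-2\pi i(\cdot)}|=1$ and the Gaussian, the resulting error is automatically uniform in $(\ub,\vb)$; the growing polynomials $Q_{r,m}$ live in the main term, not the error, so the extra case split on $\sum_j(u_j^2+v_j^2)\gtrless CL\log L$ is unnecessary (though harmless).
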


\begin{proof}
Recall that the density function $H_T ( \ub, \vb) $ is the inverse Fourier transform of $ \Phrhat ( \xb, \yb ) $, so that
$$ H_T ( \ub, \vb ) = \int_{ \mathbb{R}^{2J}} e^{  - 2 \pi i ( \xb \cdot \ub + \yb \cdot \vb ) } \Phrhat(\xb, \yb) d\xb d\yb . $$
By Lemma \ref{lemma abs cont}, \eqref{lem asymp Phrhat eqn 1} and by changing the variables to polar coordinates, with $ \zb = \xb + i \yb $ as in the previous lemma, we find that
\begin{align*}
H_T ( \ub, \vb )    = &  \int_{ ||\zb ||_2 \leq c_4  } e^{  - 2 \pi i ( \xb \cdot \ub + \yb \cdot \vb ) } \Phrhat(\xb, \yb) d\xb d\yb  \\
&+   O \bigg(  \int_{  c_4 < || \zb ||_2 \leq e^{\sqrt{G(T)}} } e^{-   \xi_{\mathrm{min}} \log G(T)  || \zb||_2^2 } d\xb d \yb + \int_{ || \zb ||_2 > e^{\sqrt{G(T)}}}      \frac{d\xb d\yb }{  ( 1+ || \zb||_2)^{2J+1} }     \bigg)   \\
= &  \int_{ ||\zb ||_2 \leq c_4  } e^{  - 2 \pi i ( \xb \cdot \ub + \yb \cdot \vb ) } \Phrhat(\xb, \yb) d\xb d\yb  \\
&+   O \bigg(  \int_{  c_4 }^{ e^{\sqrt{G(T)}} } e^{-  \xi_{\mathrm{min}} \log G(T)  w^2 }  w^{2J-1}dw  + \int_{   e^{\sqrt{G(T)}}}^\infty       \frac{1}{  ( 1+ w)^{2J+1} }  w^{2J-1}    dw    \bigg)   \\
= &  \int_{ ||\zb ||_2 \leq c_4  } e^{  - 2 \pi i ( \xb \cdot \ub + \yb \cdot \vb ) } \Phrhat(\xb, \yb) d\xb d\yb  +   O \bigg(   \frac{1}{ G(T)^{ c_4^2 \xi_{\mathrm{min}} } \log G(T) }    \bigg).   
\end{align*} 
 Therefore, it follows from  \eqref{lem asymp Phrhat eqn 2} and  \eqref{lem asymp Phrhat eqn 3} that 
\begin{align*}
H_T ( \ub, \vb )   = &  \int_{ ||\zb ||_2 \leq c_4  } e^{ \mathcal{B}_{2, \sigma} (\zb)  - 2 \pi i ( \xb \cdot \ub + \yb \cdot \vb ) }      \bigg( 1 +    \sum_{m=3}^5  \mathcal{B}_{m, \sigma} (\zb )  + O  ( || \zb ||_2^6 )\bigg)     d\xb d\yb \\
&  +   O \bigg(   \frac{1}{ G(T)^{ c_4^2 \xi_{\mathrm{min}} } \log G(T) }    \bigg)    \\
= & \int_{ ||\zb ||_2 \leq c_4  } e^{\mathcal{B}_{2, \sigma} (\zb)  - 2 \pi i ( \xb \cdot \ub + \yb \cdot \vb ) }    \bigg( 1 +    \sum_{m=3}^5  \mathcal{B}_{m, \sigma} (\zb )   \bigg)    d\xb d\yb  +   O \bigg(   \frac{1}{  ( \log G(T))^{J+3} }    \bigg) .
\end{align*} 
Let 
$$ \B'_{2,\sigma}(\zb) :=  \B_{2,\sigma}(\zb) + \pi^2 \log G(T) \sum_{j=1}^J \xi_j |z_j|^2 ,$$
then by \eqref{B2 sum}  and \eqref{B2 coeff 3}, we have 
$$\B'_{2,\sigma}(\zb) = \sum_{j_1 , j_2 \leq J} \bigg( C_{j_1, j_2 } + O\bigg(  \frac{ \log G(T)}{G(T)} \bigg)\bigg)      \overline{ z_{j_1}}  z_{j_2}  .$$  
Thus, by \eqref{lem asymp Phrhat eqn 3}  we have
\begin{align*}
H_T ( \ub, \vb )   
= & \int_{ ||\zb ||_2 \leq c_4  } e^{ -  \pi^2 \log G(T) \sum_{j=1}^J  \xi_j ( x_j^2 + y_j^2 )   - 2 \pi i ( \xb \cdot \ub + \yb \cdot \vb ) }    \bigg( 1 +    \sum_{m=2}^5  \tilde{\mathcal{B}}_{m,\sigma} (\zb )   \bigg)    d\xb d\yb   \\
&+   O \bigg(   \frac{1}{  ( \log G(T))^{J+3} }    \bigg),
\end{align*} 
where each $ \tilde{\mathcal{B}}_{m,\sigma} (\zb ) $ for $ m= 2, 3, 4, 5$ is a homogeneous polynomial of degree $m$  defined to satisfy the identity 
$$  \bigg( 1 + \B'_{2, \sigma} (\zb) + \frac12 \B'_{2, \sigma} (\zb)^2   \bigg)\bigg( 1 +    \sum_{m=3}^5  \mathcal{B}_{m, \sigma} (\zb ) \bigg) = 1 +    \sum_{m=2}^5  \tilde{\mathcal{B}}_{m,\sigma} (\zb )+ O ( || \zb ||_2^6 ).$$ 
Furthermore, one has
$$ 
\tilde{\mathcal{B}}_{m,\sigma} (\zb )= \sum_{k=0}^m \left(D_{k, m} + O \left(\frac{ \log G(T)}{G(T)} \right)\right) \overline{z}^k z^{m-k},
$$
for some constants $D_{k, m}.$ Thus, the polynomials  
$$ \tilde{\mathcal{B}}_{m}(\zb):= \sum_{k=0}^m D_{k, m} \overline{z}^k z^{m-k}$$
are independent of $\sigma$ and we see that
\begin{align*}
H_T ( \ub, \vb )   
= & \int_{ ||\zb ||_2 \leq c_4  } e^{ -  \pi^2 \log G(T) \sum_{j=1}^J  \xi_j ( x_j^2 + y_j^2 )   - 2 \pi i ( \xb \cdot \ub + \yb \cdot \vb ) }    \bigg( 1 +    \sum_{m=2}^5  \tilde{\mathcal{B}}_{m}(\zb)  \bigg)    d\xb d\yb   \\
&+   O \bigg(   \frac{1}{  ( \log G(T))^{J+3} }    \bigg).
\end{align*} 
Extending the range of integration to all of $\mathbb{R}^{2J}$ and changing the $x_j$ and $y_j$ to $x_j/\sqrt{\pi^2 \log G(T)}$ and $y_j/\sqrt{\pi^2 \log G(T)}$ respectively, we obtain
\begin{align*}
H_T& ( \ub, \vb )   
=  \int_{  \mathbb{R}^{2J}  } e^{ -  \pi^2 \log G(T) \sum_{j=1}^J \xi_j ( x_j^2 + y_j^2 )   - 2 \pi i ( \xb \cdot \ub + \yb \cdot \vb ) }    \bigg( 1 +    \sum_{m=2}^5  \tilde{\mathcal{B}}_{m}(\zb)  \bigg)    d\xb d\yb   \\
& \qquad \qquad +   O( ( \log G(T))^{-J-3} ) \\
= &  \int_{  \mathbb{R}^{2J}  } e^{ -   \sum_{j=1}^J \xi_j ( x_j^2 + y_j^2 )   -  2  \frac{    i}{ \sqrt{ \log G(T)}} ( \xb \cdot \ub + \yb \cdot \vb ) }    \bigg( 1 +    \sum_{m=2}^5  \frac{ \tilde{\mathcal{B}}_{m}(\zb) }{ \pi^m  ( \log G(T))^{m/2} } \bigg)  \frac{   d\xb d\yb }{ \pi^{2J} ( \log G(T))^J}  \\
&\quad +   O( ( \log G(T))^{-J-3} )  .
\end{align*} 
The above exponent is 
  $$ -\frac{1}{ \log G(T)} \sum_{j=1}^J  \xi_j^{-1} ( u_j^2 + v_j^2 ) -  \sum_{j=1}^J \xi_j \bigg(     \bigg(   x_j  +   \frac{i u_j }{ \xi_j  \sqrt{ \log G(T)}}  \bigg)^2 +  \bigg(   y_j  +   \frac{i v_j }{ \xi_j  \sqrt{ \log G(T)}}   \bigg)^2  \bigg) .$$
Hence, making the change of variables 
$$
\tilde{\zb} = ( \tilde{x}_1 + i \tilde{y}_1, \ldots , \tilde{x}_J + \tilde{y}_J ), $$
where
$$ \tilde{x}_j =    x_j  -   \frac{i u_j }{ \xi_j  \sqrt{ \log G(T)}},   \quad  \tilde{y}_j =   y_j  -   \frac{i v_j }{ \xi_j  \sqrt{ \log G(T)}}   $$   
for every $ j =1  , \ldots, J$,  we see that
\begin{align*}
H_T ( \ub, \vb )   
= & \frac{1}{  ( \log G(T))^J} e^{-\frac{1}{ \log G(T)} \sum_{j=1}^J \xi_j^{-1} ( u_j^2 + v_j^2 ) } 
P_T (\ub, \vb)   +   O \bigg(   \frac{1}{  ( \log G(T))^{J+3} }    \bigg),
\end{align*} 
where $P_T ( \ub, \vb )$ is a polynomial in $\ub$ and $\vb$ defined by
$$ P_T ( \ub, \vb ) =  \frac{1}{ \pi^{2J}} \int_{  \mathbb{R}^{2J}  } e^{ -  \sum_{j=1}^J \xi_j ( x_j^2 + y_j^2 ) }  \bigg( 1 +    \sum_{m=2}^5    \frac{\tilde{\mathcal{B}}_m   (   \tilde{\zb}  )  }{ \pi^m  ( \log G(T))^{m/2} }   \bigg)    d\xb d\yb . $$
By expanding the polynomial $\pi^{-m} \tilde{\mathcal{B}}_m (  \tilde{\zb}) $, we have
$$ \pi^{-m-2J}\tilde{\mathcal{B}}_m (   \tilde{\zb}) = \sum_{k=0}^m   \frac{  Q_{ k,m }  ( \xb, \yb , \ub, \vb ) }{   ( \log G(T))^{k/2}} ,$$
where $Q_{k, m  } ( \xb, \yb, \ub, \vb ) $ is a homogeneous polynomial in $\xb, \yb,\ub , \vb$ of degree $m$ and each term has $k$ factors in $\ub, \vb$. Therefore, we have
$$ P_T ( \ub, \vb ) = \frac{1   }{  \pi^{J} \prod_{j=1}^J \xi_j }   +  \sum_{m=2}^5   \sum_{k=0}^m   \frac{  Q_{k ,m  } ( \ub, \vb)  }{  ( \log G(T))^{(m+k)/2} } , $$
where 
$$ Q_{k, m } ( \ub, \vb):= \int_{  \mathbb{R}^{2J}  } e^{ -  \sum_{j=1}^J  \xi_j ( x_j^2 + y_j^2 ) }        Q_{k,m   }  ( \xb, \yb, \ub, \vb )        d\xb d\yb $$
is a homogeneous polynomial in $\ub, \vb$ of degree $k$ for $k \leq m \leq 5 $.

\end{proof}


\subsection{Proof of Theorem \ref{MainRandom}}
 
Let $\sigma=1/2+ G(T)$ and $\sigma_i=  1/2+ G_i(T)$ for $i=1, 2$. Then, recall that 
$$ \M(\sigma)=\E  \left(\log \bigg|\sum_{j=1}^J b_j L_j(\sigma, \X)\bigg|\right)= \int_{\mathbb{R}^{2J} }     \log \bigg| \sum_{j =1}^J b_j e^{u_j + i v_j }  \bigg|  H_T ( \u, \v)  d\u d\v.$$
Let $M_1 :=  \sqrt{ \eta  \log G(T) \log \log G(T)} $, where $\eta>0$ is a suitably large constant that depends on $J$ and the $\xi_j$.    
By the Cauchy-Schwarz inequality, Lemma \ref{lemma H_T bound 1} with $\lambda = (30 J\max_{j\leq J} \xi_j)^{-1}$and equation \eqref{eqn Lemma 2.3 Le4} we see that 
\begin{align*} 
\bigg| & \int_{   \mathbb{R}^{2J} \setminus  [-M_1, M_1]^{2J}  }      \log \bigg| \sum_{j =1}^J b_j e^{u_j + i v_j }  \bigg|  H_T ( \u, \v)  d\u d\v  \bigg| \\
& \leq 
\bigg( \int_{    \mathbb{R}^{2J} \setminus  [-M_1, M_1]^{2J}  }   \bigg| \log \bigg| \sum_{j =1}^J b_j e^{u_j + i v_j }  \bigg| \bigg|^{2 }  H_T ( \u, \v)  d\u d\v \bigg)^{ \frac12} \\
& \qquad \cdot
\bigg( \int_{     \mathbb{R}^{2J} \setminus  [-M_1, M_1]^{2J}    }    H_T ( \u, \v)  d\u d\v \bigg)^{  \frac12 }  \\
& \ll   ( \log G(T))^{   \frac{J+1}{2}   }        \bigg(  \frac{ ( \log G(T))^{J + \frac12 } }{ M_1 } e^{-   \frac{ \lambda M_1^2}{\log G(T)} } \bigg)^{\frac12 }    \ll \frac{ 1}{ ( \log G(T) )^{3} }. 
\end{align*}
This implies
$$
    \M(\sigma)     =    \int_{ [-M_1 , M_1  ]^{2J}}   \log \bigg| \sum_{j =1}^J b_j e^{u_j + i v_j }  \bigg|  H_T ( \u, \v)  d\u d \v  
     + O \bigg( \frac{ 1}{ ( \log G( T))^3}   \bigg).
$$
We now use the asymptotic formula for $H_T(\u, \v)$ in Lemma \ref{lem asymp H_T} to obtain
\begin{align*}
      \M(\sigma)  =  &    \frac{1}{  ( \log G(T))^J}  \int_{ [-M_1 , M_1  ]^{2J} }   \log \bigg| \sum_{j =1}^J b_j e^{u_j + i v_j }  \bigg|   e^{-\frac{1}{ \log G(T)} \sum_{j=1}^J \xi_j^{-1} ( u_j^2 + v_j^2 ) } 
P_T (\ub, \vb) d\ub d\vb  \\
&  +   O \bigg(   \frac{1}{  ( \log G(T))^{J+3} }  \int_{ [-M_1 , M_1  ]^{2J}}  \bigg|  \log \bigg| \sum_{j =1}^J b_j e^{u_j + i v_j }  \bigg|   \bigg|   d\ub d \vb + \frac{ 1}{ ( \log G( T))^{3}}   \bigg) .
\end{align*}
By the Cauchy-Schwarz inequality and \eqref{eqn Lemma 2.3 Le4} we have
\begin{align*}
  \bigg( \int_{ [-M_1 , M_1  ]^{2J}} & \bigg|  \log \bigg| \sum_{j =1}^J b_j e^{u_j + i v_j }  \bigg|   \bigg|   d\ub d \vb \bigg)^2    
  \leq (2 M_1)^{2J} \int_{ [-M_1 , M_1]^{2J}}   \bigg|  \log \bigg| \sum_{j =1}^J b_j e^{u_j + i v_j }  \bigg|   \bigg|^2    d\ub d \vb  \\
  &\ll  M_1^{2J} \int_{ \mathbb{R}^{2J}}   \bigg|  \log \bigg| \sum_{j =1}^J b_j e^{u_j + i v_j }  \bigg|   \bigg|^2  e^{ -   \frac{1}{M_1^2 } \sum_{j=1}^J ( u_j ^2 + v_j^2 ) }  d\ub d \vb   \ll M_1^{2(2J+1)} .
  \end{align*}
Thus we  have 
\begin{align*}
\M(\sigma)   = &    \frac{1}{  ( \log G(T))^J}  \int_{ [-M_1 , M_1  ]^{2J} }   \log \bigg| \sum_{j =1}^J b_j e^{u_j + i v_j }  \bigg|   e^{-\frac{1}{ \log G(T)} \sum_{j=1}^J \xi_j^{-1} ( u_j^2 + v_j^2 ) } 
P_T (\ub, \vb) d\ub d\vb  \\
&  +   O \bigg(   \frac{( \log \log G(T))^{J+(1/2) }}{  ( \log G(T))^{5/2} } \bigg). 
\end{align*}
We now use Lemma \ref{lem asymp H_T}, which gives
\begin{align*}
    \M(\sigma) &=  \frac{1   }{    \prod_{j \leq J } \xi_j   ( \pi  \log G(T))^J}  \int_{ [-M_1 , M_1  ]^{2J} }   \log \Big| \sum_{j =1}^J b_j e^{u_j + i v_j }  \Big|   e^{-\frac{1}{ \log G(T)} \sum_{j=1}^J \xi_j^{-1} ( u_j^2 + v_j^2 ) } 
  d\ub d\vb  \\
&+        \sum_{m=2}^5   \sum_{r=0}^m    \int_{ [-M_1 , M_1  ]^{2J} }   \log \Big| \sum_{j =1}^J b_j e^{u_j + i v_j }  \Big|   e^{-\frac{1}{ \log G(T)} \sum_{j=1}^J \xi_j^{-1} ( u_j^2 + v_j^2 ) } 
 \frac{   Q_{r ,m  } ( \ub, \vb)   d\ub d\vb  }{  ( \log G(T))^{J+ (m+r)/2} }  \\
&  +   O \bigg(   \frac{( \log \log G(T))^{J+(1/2) }}{  ( \log G(T))^{5/2} } \bigg) . 
    \end{align*} 
 By the Cauchy-Schwarz inequality and \eqref{eqn Lemma 2.3 Le4}, we can replace $[-M_1, M_1]^{2J}$ in both integrals by $ \mathbb{R}^{2J}$, at the cost of an error term of size $\ll 1/(\log G(T))^{3}$ if $\eta$ is suitably large. Let 
$$ Q_{r, m} ( \ub, \vb) = \sum_{ \substack{ \kb, \lb \\  \mathcal{K}(\kb+\lb) = r}  } q_{r, m, \kb, \lb } \ub^{\kb} \vb^{\lb}.$$  
Since $ Q_{r ,m  } ( \ub, \vb)$ is a homogeneous polynomial of degree $r$, by changing the variables $ \ub, \vb $ to  $ \sqrt{ \log G(T)}\ub, \sqrt{\log G(T)}\vb $, we obtain
\begin{align*}
    \M(\sigma) 
        & =  \frac{1   }{  \pi^{J} \prod_{j=1}^J \xi_j    }  \int_{ \mathbb{R}^{2J} }   \log \Big| \sum_{j =1}^J b_j e^{(u_j + i v_j )\sqrt{ \log G(T)} }  \Big|   e^{-   \sum_{j=1}^J \xi_j^{-1} ( u_j^2 + v_j^2 ) } 
  d\ub d\vb  \\
& +       \sum_{m=2}^5   \sum_{r=0}^m   \int_{ \mathbb{R}^{2J} }   \log \Big| \sum_{j =1}^J b_j e^{(u_j + i v_j) \sqrt{ \log G(T)} }  \Big|   e^{-  \sum_{j=1}^J \xi_j^{-1} ( u_j^2 + v_j^2 ) } 
    \frac{  Q_{r ,m  } ( \ub, \vb)  d\ub d\vb    }{  ( \log G(T))^{   m/2  } }     \\
&  +   O \bigg(   \frac{( \log \log G(T))^{J+(1/2) }}{  ( \log G(T))^{5/2} } \bigg)   \\
        =  &  \frac{1   }{  \pi^{J} \prod_{j=1}^J  \xi_j    }  I   (0,0, \sigma )  +       \sum_{m=2}^5   \sum_{r=0}^m   \frac{  1  }{  ( \log G(T))^{   m/2  } }   \sum_{ \substack{ \kb, \lb \\  \mathcal{K}(\kb+\lb) = r}  } q_{r, m, \kb, \lb }   I   (\kb, \lb, \sigma) \\
&  +   O \bigg(   \frac{( \log \log G(T))^{J+(1/2) }}{  ( \log G(T))^{5/2} } \bigg) ,
    \end{align*} 
where
$$ I  (\kb, \lb, \sigma ) :=  \int_{ \mathbb{R}^{2J} }   \log \Big| \sum_{j =1}^J b_j e^{(u_j + i v_j) \sqrt{ \log G(T)} }  \Big|   e^{-  \sum_{j=1}^J \xi_j^{-1} ( u_j^2 + v_j^2 ) } 
  \ub^\kb \vb^\lb   d\ub d\vb.
  $$
The above estimation  also holds for $\M(\sigma_1)$ and $\M(\sigma_2)$. Therefore, we deduce that 
\be \label{Exp estimation eqn 6}\begin{split}
\M(\sigma)-\M(\sigma_i)
& = \frac{1   }{  \pi^{J} \prod_{j=1}^J \xi_j    }  \big(I (0,0, \sigma )- I (0,0, \sigma_i )\big) \\
& +  \sum_{m=2}^5   \sum_{r=0}^m   \frac{  1  }{  ( \log G(T))^{   m/2  } }   \sum_{ \substack{ \kb, \lb \\  \mathcal{K}(\kb+\lb) = r}  } q_{r, m, \kb, \lb }  \big( I   (\kb, \lb, \sigma)-I   (\kb, \lb, \sigma_i)\big) \\
&  +   O \bigg(   \frac{( \log \log G(T))^{J+(1/2) }}{  ( \log G(T))^{5/2} } \bigg) .
\end{split}
\ee

This integral $I (\kb, \lb, \sigma )$  was estimated  in \cite{Le4} when $\kb, \lb$ are fixed and $G(T)$ is a power of $\log T$. Let $\mathcal{R}_n := \{ \ub \in \mathbb{R}^J : u_n = \max \{ u_1 , \ldots, u_J \} \} $, then $ I  (\kb, \lb, \sigma ) $ equals 
\begin{align*}
   &  \sum_{n =1}^J   \int_{ \mathbb{R}^J}  \int_{ \mathcal{R}_n }   \log \Big| \sum_{j =1}^J b_j e^{(u_j + i v_j) \sqrt{ \log G(T)} }  \Big|   e^{-  \sum_{j=1}^J \xi_j^{-1} ( u_j^2 + v_j^2 ) } 
  \ub^\kb \vb^\lb   d\ub d\vb   \\
 =  &  \sum_{n =1}^J   \int_{ \mathbb{R}^J}  \int_{ \mathcal{R}_n }   \log \Big|   b_n e^{(u_n+ i v_n) \sqrt{ \log G(T)} }  \Big|   e^{-  \sum_{j=1}^J \xi_j^{-1} ( u_j^2 + v_j^2 ) } 
  \ub^\kb \vb^\lb   d\ub d\vb + \sum_{n=1}^J \mathcal{E}_n  ( \kb, \lb , \sigma)    ,
  \end{align*}
  where $\mathcal{E}_n  ( \kb, \lb , \sigma)$ is defined by 
 \be\label{def EnkblbGT}  
     \int_{ \mathbb{R}^{ J} }    \int_{\mathcal{R}_n }   \log \Big| 1 + \sum_{j\neq n   }  \frac{b_j}{b_n}  e^{((u_j -u_n ) + i (v_j  -v_n) ) \sqrt{ \log G(T)} }  \Big|   e^{-  \sum_{j=1}^J \xi_j^{-1} ( u_j^2 + v_j^2 ) } 
  \ub^\kb \vb^\lb   d\ub d\vb   .
  \ee
  Moreover, define
    \begin{align*}
  d_{\lb} & := \int_{\mathbb{R}^J} e^{  - \sum_{j=1}^J v_j^2/\xi_j } \vb^{\lb} d\vb , \\
   D_1(\kb, \lb)&:= d_{\lb}\sum_{n=1}^J  \int_{\mathcal{R}_n }     e^{ - \sum_{j=1}^J u_j^2 / \xi_j }   u_n \ub^\kb d\ub, \\
  D_2 (\kb, \lb)&:=d_{\lb}\sum_{n=1}^J  \log |b_n|  \int_{\mathcal{R}_n } e^{ - \sum_{j=1}^J u_j^2 / \xi_j } \ub^\kb d\ub,
 \end{align*} 
  then we find that
   \begin{equation}  \label{ITkl asymp 1}
 I  (\kb, \lb, \sigma ) =   \sqrt{\log G(T)}  \cdot D_1(\kb, \lb) +   D_2 (\kb, \lb)  + \sum_{n=1}^J \mathcal{E}_n  ( \kb, \lb , \sigma) .
\end{equation}
Note that 
$$d_{\lb}  =  \begin{cases} 0, & \text{ if } \ell_j \text{ is odd for some }  j,\\ \prod_{j=1}^J  \big( \xi_j^{( \ell_j+1)/2 } \Gamma ( ( \ell_j + 1)/2 )  \big), & \text{ if } \ell_j \text{ is even for all } j .
\end{cases}
$$
By changing $(\log T)^\theta$ to $G(T)$ in the proof of \cite[Proposition 2.4]{Le4}, it follows that 
 $$
  \mathcal{E}_n  ( \kb, \lb, \sigma)=  O \bigg(  \frac{1}{ ( \log G(T))^{1/4}} \bigg)$$
if $ G(T) \gg 1$, but this bound is not sufficient for our purpose. 
  Instead, by \eqref{ITkl asymp 1}, we estimate the difference 
   \begin{align*}
   I   (\kb, \lb, \sigma) & - I( \kb, \lb, \sigma_i)  \\
   = &  ( \sqrt{ \log G(T)} - \sqrt{\log G_i (T)} )D_1(\kb, \lb)   +  \sum_{n=1}^J ( \mathcal{E}_n  ( \kb, \lb, \sigma) - \mathcal{E}_n ( \kb, \lb, \sigma_i) ) \\
   = &    \frac{(-1)^i D_1(\kb, \lb)}{ 2 ( \log G(T))^{3/2}}   +  \sum_{n=1}^J ( \mathcal{E}_n  ( \kb, \lb, \sigma) - \mathcal{E}_n ( \kb, \lb, \sigma_i) )  +  O \bigg( \frac{1}{ ( \log G(T))^{5/2}} \bigg)  .
   \end{align*}
      By a symmetry, we only estimate   $ \mathcal{E}_1  ( \kb, \lb, \sigma) - \mathcal{E}_1 ( \kb, \lb, \sigma_i) $. By \eqref{def EnkblbGT} and a simple substitution, we see that
 \begin{align*}
  \mathcal{E}_1 & ( \kb, \lb ,\sigma ) - \mathcal{E}_1 ( \kb, \lb , \sigma_i )  \\
 =  &   \int_{ \mathbb{R}^{ J} }    \int_{\mathcal{R}_1 }   \log \Big| 1 + \sum_{j\neq 1   }  \frac{b_j}{b_1}  e^{((u_j -u_1 ) + i (v_j  -v_1) ) \sqrt{ \log G(T)} }  \Big|   e^{-  \sum_{j=1}^J \xi_j^{-1} ( u_j^2 + v_j^2 ) } 
  \ub^\kb \vb^\lb   d\ub d\vb   \\
     & -  \int_{ \mathbb{R}^{ J} }    \int_{\mathcal{R}_1 }   \log \Big| 1 + \sum_{j\neq 1   }  \frac{b_j}{b_1}  e^{((u_j -u_1 ) + i (v_j  -v_1) ) \sqrt{ \log G_i (T)} }  \Big|   e^{-  \sum_{j=1}^J \xi_j^{-1} ( u_j^2 + v_j^2 ) } 
  \ub^\kb \vb^\lb   d\ub d\vb  \\
   =  &   \int_{ \mathbb{R}^{ J} }    \int_{\mathcal{R}_1 }   \log \Big| 1 + \sum_{j\neq 1   }  \frac{b_j}{b_1}  e^{((u_j -u_1 ) + i (v_j  -v_1) ) \sqrt{ \log G(T)} }  \Big|   \\
   & \cdot \bigg(  e^{-  \sum_{j=1}^J \xi_j^{-1} ( u_j^2 + v_j^2 ) } -   e^{- \frac{ \log G(T)}{\log G_{i}(T)}   \sum_{j=1}^J \xi_j^{-1} ( u_j^2 + v_j^2 ) }   \bigg( \frac{ \log G(T)}{\log G_{i}(T)} \bigg)^{J+ \mathcal{K}(\kb+\lb)/2} \bigg)   \ub^\kb \vb^\lb   d\ub d\vb. 
    \end{align*}
  Since   $  \frac{ \log G(T)}{ \log G_{i}(T)} = 1 + O \big( \frac{1}{ ( \log G(T))^2}\big)$,  by adapting the proof of Lemma 2.5 in \cite{Le4}, we find that the above is
    \begin{align*}
    \ll &  \frac{1}{ ( \log G(T))^2 }    \int_{ \mathbb{R}^{ J} }    \int_{\mathcal{R}_1 }   \bigg| \log \bigg| 1 + \sum_{j\neq 1   }  \frac{b_j}{b_1}  e^{((u_j -u_1 ) + i (v_j  -v_1) ) \sqrt{ \log G(T)} }  \bigg|  \bigg|  \\
    & \cdot e^{-    \big( 1 + O \big( \frac{1}{( \log G(T))^{2}} \big)\big) \sum_{j=1}^J  \xi_j^{-1} ( u_j^2 + v_j^2 ) }   \bigg(  \sum_{j=1}^J  (u_j^2 + v_j^2 )  +1  \bigg)  \ub^\kb \vb^\lb   d\ub d\vb \\
  \ll & \frac{1}{ ( \log G(T))^{9/4}}.   
    \end{align*}
Thus, we deduce that 
$$
   I   (\kb, \lb, \sigma) - I( \kb, \lb, \sigma_i) \\
   =  \frac{(-1)^i D_1 (\kb, \lb)}{ 2 ( \log G(T))^{3/2}}      +    O \bigg( \frac{1}{ ( \log G(T))^{9/4}} \bigg) .
$$
Inserting this estimate in \eqref{Exp estimation eqn 6} gives 
$$ 
\M(\sigma)-\M(\sigma_i)=  \frac{(-1)^i D_1 (0,0)}{ 2  \pi^{J} (\prod_{j=1}^J \xi_j)  ( \log G(T))^{3/2}} +    O \bigg( \frac{1}{ ( \log G(T))^{9/4}} \bigg) ,
$$
where 
$$ D_1(0, 0)=  \pi^{J/2}\prod_{j=1}^J \sqrt{\xi_j}  \sum_{n=1}^J  \int_{\mathcal{R}_n }     e^{ - \sum_{j=1}^J u_j^2 / \xi_j }   u_n d\ub 
$$
by $\Gamma(1/2)=\sqrt{\pi}.$ This completes the proof.

\section{acknowledgment}

We thank the anonymous referees for carefully reading the paper, and for their numerous comments and suggestions.

\end{document}